\def\N{{\mathbb{N}}}
\def\R{{\mathbb{R}}}
\def\E{{\mathbb{E}}}
\def\G{{\mathbb{G}}}
\def\S{{\mathbb{S}}}
\def\W{{\mathbb{W}}}
\begin{document}
\title[Pontryagin principle and Envelope Theorem]{Pontryagin principle and Envelope Theorem}
\author[J. Blot, H. Yilmaz]
{Jo${\rm \ddot e}$l Blot, Hasan Yilmaz } 

\address{Jo\"{e}l Blot: Laboratoire SAMM EA 4543,\newline
Universit\'{e} Paris 1 Panth\'{e}on-Sorbonne, centre P.M.F.,\newline
90 rue de Tolbiac, 75634 Paris cedex 13,
France.}
\email{blot@univ-paris1.fr}
\address{Hasan Yilmaz:  Laboratoire SAMM EA 4543,\newline
Universit\'{e} Paris 1 Panth\'{e}on-Sorbonne, centre P.M.F.,\newline
90 rue de Tolbiac, 75634 Paris cedex 13,
France.}
\email{yilmaz.research@gmail.com}
\date{June, 25, 2022}
\begin{abstract}
We provide an improvement of the maximum principle of Pontryagin of the Optimal Control problems. We establish differentiability properties of the value function of problems of Optimal Control with assumptions as low as possible. Notably, we lighten the assumptions by using G\^ateaux and Hadamard differentials.
\end{abstract}
\numberwithin{equation}{section}
\newtheorem{theorem}{Theorem}[section]
\newtheorem{lemma}[theorem]{Lemma}
\newtheorem{example}[theorem]{Example}
\newtheorem{remark}[theorem]{Remark}
\newtheorem{definition}[theorem]{Definition}
\newtheorem{corollary}[theorem]{Corollary}
\newtheorem{proposition}[theorem]{Proposition}
\thispagestyle{empty} \setcounter{page}{1}
\maketitle
\vskip3mm
\noindent
{\bf Mathematical  Subject Classification 2010}: 49K15, 90C31, 49J50.\\
{\bf Key Words}: Pontryagin maximum principle, piecewise continuous functions, Envelope Theorem.
\vskip4mm
%%%%%%%%%%%%%%%%%%%%%%%%%%%%%%%%%%%%%%%%%%%%%%%%%%%%%%%%%%%%%%%%%%%%%%%%%%%%%%%%
%
\section{Introduction}
The paper provides envelope theorems for parameterized problems of Optimal Control (problem of Bolza) as
\[
({\mathcal B},\pi)
\left\{
\begin{array}{cl}
{\rm Maximize} & \int_0^T f^0(t,x(t),u(t),\pi)dt + g^0(x(T),\pi) \\
{\rm subject \;  to} & x \in PC^1([0,T], \Omega), u \in NPC^0([0,T], U)\\
\null & \forall t\in[0,T],\, \underline{d}x(t) = f(t,x(t), u(t),\pi), \; x(0) = \xi_0\\
\null & \forall i \in \{ 1,..., m\}, \; \; g^i(x(T),\pi) \geq 0\\
\null & \forall j \in \{ 1,..., q\}, \; \; h^j(x(T),\pi) = 0.
\end{array}\right.
\]
For all parameter $\pi$, $V[\pi]$ is the value of the problem of (${\mathcal B},\ \pi)$.\\
We establish properties of the value function $[\pi\mapsto V[\pi]]$ in terms of G\^ateaux variation, G\^ateaux differentiability and Fr\'echet continuous differentiability.\\
We try to establish such results using assumptions as low as possible.\\
Envelope theorems for static optimization and Calculus of Variations in \cite{BY2} where references on economic motivations are cited.\\
To realize that we start by establishing new Pontryagin principles for the problems of Bolza and Mayer without parameter which improve the results of \cite{BY} by lightening the assumptions. Moreover, we provide new qualification conditions which are very useful to treat the question of the envelope theorems.\\
Notice that we provide conditions on the G\^ateaux differentials to obtain conditions of Lipschitz, we provide a new result on the differentiability of nonlinear functionals. Moreover, we don't do assumptions on the regularity of the multipliers and the adjoint function with respect to the parameter (as it is often the case in the literature). \\
We summarize the content of this paper as follows.\\
In Section 2, we establish the Pontryagin principles for the Optimal Control. In a first subsection, we state the Pontryagin principle for the problem of Bolza, and we provide new qualification conditions. In a second subsection, we state the Pontryagin principle for the problem of Mayer, and we give qualification conditions. In a third subsection, we prove the results on the problem of Mayer; in order to do that we use a new multiplier rules which is an improvement of a multiplier rules in \cite{BL}. In the last subsection, in order to prove the Pontryagin principle of the problem of Bolza, we transform the problem of Bolza into a problem of Mayer.\\
In Section 3, we establish envelope theorems for parameterized problems of Optimal Control. In a first subsection, we state envelope theorems. In a second subsection, we prove the first envelope theorem; in order to do that we provide new results on the differentiability of nonlinear integral functionals and we use the new Pontryagin principles and qualification conditions for the problems of Bolza without parameter. In a third subsection, by using the first envelope theorem, we prove that the value function is G\^ateaux differentiable at a point. In the last subsection, we prove the last envelope theorem by using the second envelope theorem. 
\section{Statements of the Pontryagin Principles}
\subsection{Pontryagin principle for the problem of Bolza}  
$E$ is a real Banach space, $\Omega$ is a non-empty subset of $E$, $U$ is a Hausdorff topological space, $f:[0,T] \times \Omega \times U \rightarrow E$, $f^0:[0,T] \times \Omega \times U \rightarrow \R$, $g^\alpha: \Omega \rightarrow \R$ ($0\le \alpha\le  m$) and $h^\beta : \Omega \rightarrow \R$ ($1\le \beta \le q$) are functions when $(m,q)\in \N_*\times\N_*$ where $\N_*=\N\setminus\{0\}$.\\
When $X$ is a Hausdorff topological space, $PC^0([0,T],X)$ denotes the space of the {\it piecewise continuous functions} from $[0,T]$ into $X$.\\ As in \cite{BY}, we specify that $x\in PC^0([0,T], X)$ when $x$ is continuous on $[0,T]$ or when there exists a subdivision of $[0,T]$, $0=\tau_0<\tau_1<...<\tau_k<\tau_{k+1}=T$ such that $x$ is continuous at $t$ when $t\notin\{\tau_i : 0\le i\le k+1\}$ and the right-hand limit $x(\tau_i+)$ exists in $X$ and when $i\in\{0,...,k\}$ and the left hand limit $x(\tau_i-)$ exists in $X$ when $i\in\{1,...,k+1\}$.\\
We define $NPC^0_R([0,T],X)$ as the set of the $x\in PC^0([0,T],X)$ which are right-continuous on $[0,T[$ and left-continuous at $T$. An element of $NPC^0_R([0,T],X)$ is called a {\it normalized} piecewise continuous function, cf. \cite{BY}.\\
When $X$ is a real normed vector space, $\mathfrak{O}$ is a non-empty open subset of $X$ and $Y$ is a Hausdorff space.
A mapping $\mathfrak{g}:[0,T] \times \mathfrak{O}  \rightarrow Y$ is {\it piecewise continuous with a parameter} on $[0,T]\times \mathfrak{O}$ when there exists a subdivision of $[0,T]$, $0=\tau_0<\tau_1<...<\tau_k<\tau_{k+1}=T$ such that for all $i \in \{0,...,k-1\}$, $\mathfrak{g}$ is continuous on $[\tau_i,\tau_{i+1}[\times \mathfrak{O}$, $\mathfrak{g}$ is continuous on $[\tau_k,\tau_{k+1}]\times \mathfrak{O}$, and for all $i \in \{1,...,k\}$, for all $x\in \mathfrak{O}$, $\lim\limits_{\substack{t \to \tau_i-, z \to x}}\mathfrak{g}(t,z)$ exists in $Y$. The space of all the piecewise continuous with a parameter is denoted by $PCP^0([0,T]\times \mathfrak{O},Y)$.\\
When $X$ is included into a real normed vector space, $PC^1([0,T],X)$ denotes the space of the piecewise continuously differentiable functions from $[0,T]$ into $X$. We specify that $x\in PC^1([0,T],X)$ when $x$ is continuously differentiable on $[0,T]$ or when $x$ is continuous on $[0,T]$ and there exists a subdivision , $0=\tau_0<\tau_1<...<\tau_k<\tau_{k+1}=T$ such that $x$ is continuously differentiable at $t$ if $t\notin \{\tau_i: 0\le i \le k+1\}$ the right derivative $x_R'(\tau_i)=x'(\tau_i+)$ exists when $i\in\{0,...,k\}$, the left derivative $x_L'(\tau_i)=x'(\tau_i-)$ exists when $i\in\{1,..,k+1\}$, cf. \cite{BY}.\\
As in \cite{BY}, we consider the {\it extended derivative} of $x\in PC^1([0,T],X)$ as follows: 
\begin{equation}\label{eq21}
\underline{d}x(t) :=
\left\{
\begin{array}{ccl}
x'(t) & {\rm if} & t \in [0,T] \setminus \{ \tau_i : i \in \{0, ..., k+1 \}\}\\
x'_R(t) & {\rm if} & t = \tau_i, i \in \{0,...,k \}\\
x'_L(t) & {\rm if} & t =T.
\end{array}
\right.
\end{equation}
Note that if $x\in PC^1([0,T],X)$ then $\underline{d}x \in NPC^0_R([0,T], X)$. Moreover, $\underline{d}$\, is a continuous linear operator from $PC^1([0,T],X)$ into $NPC^0_R([0,T], X)$.\\
When $X$ is included in a Banach space, the following relation holds: 
$$\text{for all } s<t \text{ in } [0,T],\, x(t)-x(s)=\int_{s}^{t} \underline{d}x(r)dr$$
where the integral is taken in the sense of Riemann as exposed in \cite{Di}. We refer to \cite{BY} for the details about these function spaces.\\
When $X$ is a real normed vector space, $\mathfrak{O}$ is a non-empty open subset of $X$, $x\in \mathfrak{O}$, $v\in X$, and $Y$ is a real normed vector space. When $\mathfrak{f}: \mathfrak{O} \rightarrow Y$ is a mapping.
When it exists $D_G^+\mathfrak{f}(x;v)$ denotes the right-directional derivative (also called the right G\^ateaux variation) of ${\mathfrak f}$ at $x$ in the direction $v$ cf. \cite{BY2} (Subsection 2.1).
When it exists $D_G{\mathfrak f}(x)$ (respectively $D_H{\mathfrak f}(x)$, respectively $D_F{\mathfrak f}(x)$) denotes the G\^ateaux (respectively Hadamard, respectively Fr\'echet) differential of ${\mathfrak f}$ at $x$.\\
Moreover, when $X$ is a finite product of $n$ real normed spaces, $X:=\prod_{i=1}^{n} X_i$, if $i\in\{1,...,n\}$, $D_{G,i}{\mathfrak f}(x)$ (respectively $D_{H,i}{\mathfrak f}(x)$, respectively $D_{F,i}{\mathfrak f}(x)$) denotes the partial G\^ateaux (respectively Hadamard, respectively Fr\'echet) differential of ${\mathfrak f}$ at $x$ with respect to the $i-th$ vector variable. If $1\le i_1\le i_2 \le i_3 \le n$, $D_{H,(i_1,i_2,i_3)}{\mathfrak f}(x)$ denotes the Hadamard diffential of the mapping $[({\bf x}_{i_1},{\bf x}_{i_2},{\bf x}_{i_3})\mapsto {\mathfrak f}(x_1,...,{\bf x}_{i_1},...,{\bf x}_{i_2},...,{\bf x}_{i_3},...,x_n)]$ at the point $(x_{i_1},x_{i_2},x_{i_3})$. 

We formulate the problem of Bolza:
\[
({\mathcal B})
\left\{
\begin{array}{cl}
{\rm Maximize} & J(x,u):=\int_0^T f^0(t,x(t),u(t))dt + g^0(x(T)) \\
{\rm subject \;  to} & x \in PC^1([0,T], \Omega), u \in NPC_R^0([0,T], U)\\
\null & \forall t\in [0,T],\, \underline{d}x(t) = f(t,x(t), u(t)), \; x(0) = \xi_0\\
\null & \forall \alpha \in\{ 1,...,m\}, \; \; g^{\alpha}(x(T)) \geq 0\\
\null & \forall \beta \in \{1,..., q\}, \; \; h^{\beta}(x(T)) = 0.
\end{array}\right.
\]
Generally the controlled dynamical system is present in (${\mathcal B}$) is formulated as follows: $x'(t)=f(t,x(t),u(t))$ when $x'(t)$ exists in \cite{BY}, we explain why the present formulation is equivalent.\\
If $f^0=0$, (${\mathcal B}$) is called a problem of Mayer and it is denoted by (${\mathcal M}$).\\
When $(x,u)$ is an admissible process for $(\mathcal{B})$ or $(\mathcal{M})$, we consider the following condition of qualification, for $i \in \{0,1 \}$. 
\[
(QC, i)
\left\{
\begin{array}{l}
{\rm If} \;\; (c_{\alpha})_{i \leq \alpha \leq m} \in \R_+^{1 - i+ m}, (d_{\beta})_{1 \leq \beta \leq q} \in \R^q {\rm \;\;\;satisfy} \\
(\forall \alpha \in \{ 1,...,m\}, \; c_{\alpha} g^{\alpha}(x(T)) = 0), {\rm and} \\
\sum_{\alpha = i}^m c_{\alpha} D_{G}g^{\alpha}(x(T)) + \sum_{\beta = 1}^q d_{\beta} D_{G}h^{\beta}(x(T)) = 0, {\rm then}\\
(\forall \alpha \in \{ i, ...,m\}, \;  c_{\alpha} = 0) \;\; {\rm and} \;\; (\forall \beta \in \{ 1, ..., q\}, \; d_{\beta} = 0).
\end{array}
\right.
\]
When $i=0$, this condition is due to Michel \cite{PMP}.\\
Now we formulate the assumptions for our theorems. Let $(x_0,u_0)$ be an admissible process of $(\mathcal{B})$ or $(\mathcal{M})$.
\vskip1mm
\noindent
{\bf Conditions on the integrand of the criterion.}
\begin{itemize}
%A3
\item[\bf{(A{\sc i}1)}] $f^0\in C^0([0,T] \times \Omega \times U,\R)$, for all $(t, \xi, \zeta) \in [0,T] \times \Omega \times U$, $D_{G,2}f^0(t,\xi, \zeta)$ exists, for all $(t,\zeta) \in [0,T]\times U$, $D_{F,2}f^0(t,x_0(t), \zeta)$ exists and $[(t,\zeta)\mapsto D_{F,2}f^0(t,x_0(t),\zeta)]\in C^0([0,T]\times U,E^*)$. 
% A8 et (A{\sc int}3)
\item[\bf{(A{\sc i}2)}] For all non-empty compact $K\subset \Omega$, for all non-empty compact $M\subset U$, $\sup_{(t,\xi,\zeta)\in [0,T]\times K\times M} \|D_{G,2}f^0(t,\xi,\zeta)\| <+\infty$.
\end{itemize}
where $C^0$ means the continuity and $E^*$ denotes the topological dual space of $E$.
\vskip1mm
\noindent
{\bf Conditions on the vector field.}
\begin{itemize}
%A3
\item[\bf{(A{\sc v}1)}] $f\in C^0([0,T] \times \Omega \times U,E)$, for all $(t, \xi, \zeta) \in [0,T] \times \Omega \times U$, $D_{G,2}f(t,\xi, \zeta)$ exists, for all $(t,\zeta) \in [0,T]\times U$, $D_{F,2}f(t,x_0(t), \zeta)$ exists and $[(t,\zeta)\mapsto D_{F,2}f(t,x_0(t),\zeta)]\in C^0([0,T]\times U,\mathcal{L}(E,E))$. 
% A8 et (A{\sc int}3)
\item[\bf{(A{\sc v}2)}] For all non-empty compact $K\subset \Omega$, for all non-empty compact $M\subset U$, $\sup_{(t,\xi,\zeta)\in [0,T]\times K\times M} \|D_{G,2}f(t,\xi,\zeta)\| <+\infty$,
\end{itemize}
where $\mathcal{L}(E,E)$ denotes the space of bounded linear mappings from $E$ into $E$.\\
{\bf Conditions on terminal constraints functions and on terminal part of the criterion.}
\begin{itemize}
% A1
\item[\bf{(A{\sc t}1)}] For all $\alpha \in \{0,...,m\}$, $g^{\alpha}$ is Hadamard differentiable at $x_0(T)$.
% A2
\item[\bf{(A{\sc t}2)}] For all $\beta \in \{1,...,q \}$, $h^{\beta}$ is continuous on a neighborhood of $x_0(T)$ and Hadamard differentiable at $x_0(T)$.
\end{itemize}
(A{\sc i}1) and (A{\sc i}2) are an improvement of condition (A3) of \cite{BY}, (A{\sc v}1) and (A{\sc v}2) are an improvement of condition (A4) of \cite{BY}, (A{\sc t}1) is an improvement of condition (A1) of \cite{BY}, and (A{\sc t}2) is an improvement of condition (A2) of \cite{BY}.\\
The Hamiltonian of (${\mathcal B}$) is the function $H_B :[0,T]\times \Omega \times U\times E^*\times\R \rightarrow \R$ defined by $H_B(t,\xi,\zeta,p, \lambda) := \lambda f^0(t,\xi,\zeta) + p \cdot f(t,\xi,\zeta)$ when $(t,\xi,\zeta,p, \lambda)\in [0,T]\times \Omega \times U\times E^*\times\R$.
%% theorem 2.1
\begin{theorem}\label{th21} (Pontryagin Principle for the problem of Bolza)\\
When $(x_0,u_0)$ is a solution of $(\mathcal{B})$, under (A{\sc i}1), (A{\sc i}2), (A{\sc v}1), (A{\sc v}2), (A{\sc t}1) and (A{\sc t}2), there exist multipliers $(\lambda_{\alpha})_{0 \leq \alpha \leq m} \in \R^{1 + m}$, $(\mu_{\beta})_{1 \leq \beta \leq q} \in \R^q$ and an adjoint function $p \in PC^1([0,T], E^*)$ which satisfy the following conditions.
\begin{itemize}
\item[(NN)] $((\lambda_{\alpha})_{0 \leq \alpha \leq m},(\mu_{\beta})_{1 \leq \beta \leq q})$ is non zero.
\item[(Si)] For all $\alpha \in \{0,...,m \}$, $\lambda_{\alpha} \geq 0$.
\item[(S${\ell}$)] For all $\alpha \in \{1,...,m \}$, $\lambda_{\alpha} g^{\alpha}(x_0(T)) = 0$.
\item[(TC)] $\sum_{\alpha = 0}^{m} \lambda_{\alpha} D_Hg^{\alpha}(x_0(T)) + \sum_{\beta = 1}^q \mu_{\beta} D_Hh^{\beta}(x_0(T)) = p(T)$.
\item[(AE.B)] $\underline{d}p(t) = - D_{F,2}H_B(t, x_0(t),u_0(t), p(t), \lambda_0)$ for all $t \in [0,T]$.
\item[(MP.B)] For all $t \in [0,T]$, for all $\zeta \in U$, \\
$H_B(t, x_0(t), u_0(t), p(t), \lambda_0) \geq H_B(t, x_0(t), \zeta, p(t), \lambda_0)$.
\item[(CH.B)] $\bar{H}_B := [ t \mapsto H_B(t, x_0(t), u_0(t), p(t), \lambda_0)] \in C^0([0,T], \R)$,
\end{itemize} 
\end{theorem}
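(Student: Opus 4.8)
The plan is to reduce the Bolza problem $(\mathcal B)$ to a problem of Mayer, as announced in the introduction, and then to invoke the Pontryagin principle for the problem of Mayer established in the previous subsection. First I would augment the state: set $\tilde E := \R \times E$ with its product Banach structure, $\tilde\Omega := \R \times \Omega$, introduce the augmented vector field $\tilde f(t,(\eta,\xi),\zeta) := (f^0(t,\xi,\zeta), f(t,\xi,\zeta))$ on $[0,T]\times\tilde\Omega\times U$, the initial point $\tilde\xi_0 := (0,\xi_0)$, and the terminal data $\tilde g^0(\eta,\xi) := \eta + g^0(\xi)$, $\tilde g^\alpha(\eta,\xi):= g^\alpha(\xi)$ for $1\le\alpha\le m$, and $\tilde h^\beta(\eta,\xi) := h^\beta(\xi)$ for $1\le\beta\le q$. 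Writing $x_0^0(t) := \int_0^t f^0(s,x_0(s),u_0(s))\,ds$, the map $s\mapsto f^0(s,x_0(s),u_0(s))$ lies in $NPC^0_R([0,T],\R)$ (continuity of $f^0$, admissibility of $(x_0,u_0)$), hence $x_0^0\in PC^1([0,T],\R)$ and $\tilde x_0 := (x_0^0,x_0)\in PC^1([0,T],\tilde\Omega)$. Through the correspondence $x \leftrightarrow (x^0,x)$, where $x^0$ is the unique primitive of $f^0(\cdot,x(\cdot),u(\cdot))$ vanishing at $0$, admissible processes of $(\mathcal B)$ are in bijection with admissible processes of the Mayer problem $(\tilde{\mathcal M})$ of maximizing $\tilde g^0(\tilde x(T))$ under $\underline{d}\tilde x = \tilde f(t,\tilde x,u)$, $\tilde x(0)=\tilde\xi_0$, $\tilde g^\alpha(\tilde x(T))\ge 0$, $\tilde h^\beta(\tilde x(T))=0$, and the criterion is preserved since $\tilde g^0(\tilde x(T)) = x^0(T) + g^0(x(T)) = J(x,u)$; so $(\tilde x_0,u_0)$ is a solution of $(\tilde{\mathcal M})$.

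Next I would check that the hypotheses transfer. Since $\tilde f = (f^0,f)$ does not depend on $\eta$, $D_{G,2}\tilde f(t,(\eta,\xi),\zeta)$ is the block map $(\sigma,v)\mapsto (D_{G,2}f^0(t,\xi,\zeta)v,\, D_{G,2}f(t,\xi,\zeta)v)$, and $D_{F,2}\tilde f(t,\tilde x_0(t),\zeta)$ exists and depends continuously on $(t,\zeta)$; the local uniform bounds transfer because a compact subset of $\tilde\Omega$ projects onto a compact subset of $\Omega$. Thus (A{\sc i}1)--(A{\sc i}2) on $f^0$ and (A{\sc v}1)--(A{\sc v}2) on $f$ together give the vector-field hypotheses of the Mayer theorem for $\tilde f$. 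Likewise (A{\sc t}1) gives that $\tilde g^\alpha$ ($0\le\alpha\le m$) is Hadamard differentiable at $\tilde x_0(T)$ with $D_H\tilde g^0(\tilde x_0(T))(\sigma,v) = \sigma + D_Hg^0(x_0(T))v$ and $D_H\tilde g^\alpha(\tilde x_0(T))(\sigma,v) = D_Hg^\alpha(x_0(T))v$ for $\alpha\ge 1$, and (A{\sc t}2) gives that $\tilde h^\beta$ is continuous near $\tilde x_0(T)$ and Hadamard differentiable there with $D_H\tilde h^\beta(\tilde x_0(T))(\sigma,v) = D_Hh^\beta(x_0(T))v$. So the Pontryagin principle for the problem of Mayer applies to $(\tilde{\mathcal M})$.

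Finally I would read off the conclusions. The Mayer principle yields non-zero multipliers $(\lambda_\alpha)_{0\le\alpha\le m}\in\R^{1+m}$, $(\mu_\beta)_{1\le\beta\le q}\in\R^q$ with $\lambda_\alpha\ge 0$ and $\lambda_\alpha g^\alpha(x_0(T))=0$, and an adjoint $\tilde p\in PC^1([0,T],\tilde E^*)$, which I write $\tilde p = (p^0,p)$ under the identification $\tilde E^* = \R\times E^*$. Because $\tilde f$ is independent of the first state coordinate, the Mayer adjoint equation forces $\underline{d}p^0\equiv 0$, i.e. $p^0$ is constant; projecting the Mayer transversality condition onto the $\R$-coordinate and using $D_H\tilde g^0(\tilde x_0(T))(1,0)=1$ while $D_H\tilde g^\alpha$ ($\alpha\ge 1$) and $D_H\tilde h^\beta$ vanish on $(1,0)$, we get $p^0(T)=\lambda_0$, hence $p^0\equiv\lambda_0$. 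This already delivers (NN), (Si) and (S$\ell$). Projecting the transversality condition onto the $E^*$-coordinate gives (TC). Since $\tilde p\cdot\tilde f = p^0f^0 + p\cdot f = \lambda_0 f^0 + p\cdot f = H_B(\cdot,\cdot,\cdot,p,\lambda_0)$, the $E^*$-projection of the Mayer adjoint equation is exactly (AE.B), the Mayer maximum principle is exactly (MP.B), the continuity of the Mayer Hamiltonian along the optimal trajectory is exactly (CH.B), and $p\in PC^1([0,T],E^*)$ because $\tilde p$ is.

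Conceptually the whole argument is a change of unknowns, so the only delicate point --- and the one where hidden assumptions could bite --- is the transfer step: one must be sure that the Mayer theorem of the previous subsection is stated with precisely the mixed "Fr\'echet along the trajectory, G\^ateaux everywhere" differentiability together with the local-uniform-boundedness hypotheses that are needed, over a possibly non-open state set $\tilde\Omega\subset\tilde E$, so that the block structure of $\tilde f$ and the splitting $\tilde E^*=\R\times E^*$ may legitimately be exploited; once that is in place, every item of Theorem~\ref{th21} is obtained by projecting the Mayer conclusions onto the two coordinate blocks.
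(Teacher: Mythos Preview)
Your proposal is correct and follows essentially the same approach as the paper: the paper reduces the Bolza problem to a Mayer problem by adjoining a scalar state $\sigma$ with $\underline{d}\sigma = f^0$, sets $G^0(\sigma,\xi)=\sigma+g^0(\xi)$, $G^\alpha=g^\alpha$, $H^\beta=h^\beta$, applies Theorem~\ref{th22}, and then projects the resulting multipliers and adjoint $(p^0,p)$ onto the two factors, obtaining $p^0\equiv\lambda_0$ from the adjoint equation and transversality exactly as you do. Your caveat about the state set is not an obstacle here, since in this section $\Omega$ is only assumed to be a non-empty subset of $E$, so $\R\times\Omega$ is equally admissible for the Mayer theorem.
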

\noindent
(NN) means non nullity, (Si) means sign, (S${\ell}$) means slackness, (TC) means transversality condition, (AE.B) means adjoint equation, (MP.B) means maximum principle and (CH.B) means continuity of the Hamiltonian.
\begin{corollary}\label{cor12}
In the setting and under the assumptions of Theorem \ref{th21}, if, in addition, we assume that, for all $(t,\xi, \zeta) \in [0,T] \times \Omega \times U$, the partial derivatives with respect to the first variable $\partial_1f^0(t, \xi, \zeta)$ and $\partial_1f(t, \xi, \zeta)$ exist and $\partial_1f^0$ and $\partial_1f$ are continuous on $[0,T] \times \Omega \times U$, then $\bar{H}_B \in PC^1([0,T], \R)$ and, for all $t \in [0,T]$, $\underline{d}\bar{H}_B(t) = \partial_1 H_B(t, x_0(t), u_0(t), p(t), \lambda_0)$.
\end{corollary}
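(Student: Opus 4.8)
\emph{Approach.} The plan is to sandwich $\bar{H}_B$ between two genuinely $PC^1$ functions obtained by freezing the control value, to differentiate those by a chain rule along curves, and to let the adjoint equation (AE.B) produce the cancellation that leaves only $\partial_1H_B$. Write $\bar{H}_B(t)=\lambda_0 f^0(t,x_0(t),u_0(t))+p(t)\cdot f(t,x_0(t),u_0(t))$, and for a fixed value $c\in U$ set $\psi_c(r):=H_B(r,x_0(r),c,p(r),\lambda_0)$. The first step is the chain rule: for each fixed $c$, $\psi_c\in PC^1([0,T],\R)$ with
\[
\underline{d}\psi_c(r)=\partial_1H_B(r,x_0(r),c,p(r),\lambda_0)+D_{F,2}H_B(r,x_0(r),c,p(r),\lambda_0)\cdot\underline{d}x_0(r)+\underline{d}p(r)\cdot f(r,x_0(r),c).
\]
On each subinterval where $x_0$ and $p$ are $C^1$ this comes from differentiating $r\mapsto f^0(r,x_0(r),c)$ and $r\mapsto f(r,x_0(r),c)$: the added hypothesis gives $\partial_1f^0,\partial_1f$ continuous; (A{\sc i}1) and (A{\sc v}1) give that $D_{F,2}f^0(r,x_0(r),c)$ and $D_{F,2}f(r,x_0(r),c)$ exist and depend continuously on $r$; pointwise Fr\'echet differentiability along $x_0$ is enough because the base point stays $x_0(r)$ while the increment $x_0(r+h)-x_0(r)\to 0$; and $H_B$ is affine in $p$. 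Regrouping the three contributions gives the formula, and the resulting function lies in $NPC^0_R$, so $\psi_c\in PC^1$.

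Set $G(r,\zeta):=\partial_1H_B(r,x_0(r),\zeta,p(r),\lambda_0)+D_{F,2}H_B(r,x_0(r),\zeta,p(r),\lambda_0)\cdot\underline{d}x_0(r)+\underline{d}p(r)\cdot f(r,x_0(r),\zeta)$, so $\underline{d}\psi_c(r)=G(r,c)$. Two observations: (i) for every $t$, $G(t,u_0(t))=\partial_1H_B(t,x_0(t),u_0(t),p(t),\lambda_0)$, because $\underline{d}x_0(t)=f(t,x_0(t),u_0(t))$ and (AE.B) gives $\underline{d}p(t)=-D_{F,2}H_B(t,x_0(t),u_0(t),p(t),\lambda_0)$, so the last two terms of $G$ cancel; (ii) for every $t\in[0,T)$, the restriction $G|_{[t,T]\times U}$ is continuous at $(t,u_0(t))$, because $\partial_1f^0,\partial_1f$ are continuous, $(r,\zeta)\mapsto D_{F,2}f^0(r,x_0(r),\zeta)$ and $(r,\zeta)\mapsto D_{F,2}f(r,x_0(r),\zeta)$ are continuous by (A{\sc i}1) and (A{\sc v}1), $x_0,p$ are continuous, and $\underline{d}x_0,\underline{d}p\in NPC^0_R$ are right-continuous on $[0,T)$.

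Now fix $t\in[0,T)$ and $s>0$ small. By (MP.B), $\bar{H}_B(r)\ge\psi_c(r)$ for all $r$ and all $c\in U$, with equality at $r=t$ for $c=u_0(t)$ and at $r=t+s$ for $c=u_0(t+s)$; dividing by $s$ and using the Newton--Leibniz formula for $PC^1$ functions,
\[
\frac{1}{s}\int_t^{t+s}G(r,u_0(t))\,dr\;\le\;\frac{\bar{H}_B(t+s)-\bar{H}_B(t)}{s}\;\le\;\frac{1}{s}\int_t^{t+s}G(r,u_0(t+s))\,dr.
\]
As $s\to 0^+$ the left member tends to $G(t,u_0(t))$ since $r\mapsto G(r,u_0(t))=\underline{d}\psi_{u_0(t)}(r)$ is right-continuous at $t$; for the right member, $u_0(t+s)\to u_0(t)$ by right-continuity of $u_0$ on $[0,T)$, and a basic neighbourhood argument based on observation (ii) gives $\sup_{r\in[t,t+s]}|G(r,u_0(t+s))-G(t,u_0(t))|\to 0$, so the right member tends to the same limit. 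Hence $\bar{H}_B$ has at $t$ a right derivative equal to $G(t,u_0(t))=\partial_1H_B(t,x_0(t),u_0(t),p(t),\lambda_0)=:\varphi(t)$, for all $t\in[0,T)$.

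To finish: the added hypothesis together with $x_0,p\in PC^1\subset C^0$ and $u_0\in NPC^0_R$ gives $\varphi\in NPC^0_R([0,T],\R)$, so $\varphi$ is continuous on each open subinterval of a subdivision adapted to $x_0,p,u_0$ and admits one-sided limits at its nodes. Since $\bar{H}_B$ is continuous by (CH.B) and has on all of $[0,T)$ the right derivative $\varphi$, the elementary fact that a continuous real function on $[a,b]$ with a continuous right derivative is $C^1$ with that derivative (compare $\bar{H}_B$ with $t\mapsto\bar{H}_B(a)+\int_a^t\varphi$: the difference has zero right derivative, hence is constant) shows $\bar{H}_B$ is $C^1$ with $\bar{H}_B'=\varphi$ on those open subintervals; then, by continuity of $\bar{H}_B$ and the existence of the one-sided limits of $\varphi$, the one-sided derivatives of $\bar{H}_B$ at the nodes exist and equal those of $\varphi$. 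Thus $\bar{H}_B\in PC^1([0,T],\R)$, and since $\varphi$ is right-continuous at the interior nodes and left-continuous at $T$ (because $u_0$ is), $\underline{d}\bar{H}_B(t)=\varphi(t)=\partial_1H_B(t,x_0(t),u_0(t),p(t),\lambda_0)$ for all $t\in[0,T]$. The main obstacle is precisely that $u_0$ is only normalized piecewise continuous with values in a topological space with no linear structure, so $\bar{H}_B$ cannot be treated by a direct chain rule: (MP.B) is what makes the sandwich between the honest $PC^1$ functions $\psi_{u_0(t)}$ and $\psi_{u_0(t+s)}$ possible, and (AE.B) is exactly what forces the two ``extra'' terms to cancel at the contact point; a secondary technical point is that (A{\sc i}1)--(A{\sc v}1) provide Fr\'echet differentiability only along $x_0$, which is why Step 1 must be done as a curve chain rule.
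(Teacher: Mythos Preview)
Your argument is correct. The paper itself does not give a self-contained proof of this corollary: it first reduces the Bolza problem to the auxiliary Mayer problem $(\mathcal{MB})$ (introducing the extra state $\sigma$ with $\underline{d}\sigma=f^0$) and then invokes the proof of Part~(II) of Theorem~2.2 in \cite{BY} (Subsection~5.3 there) for the Mayer corollary. That reference uses exactly the mechanism you rediscovered: for each fixed control value one freezes $\zeta$ in the Hamiltonian to get an honest $PC^1$ function of $t$, (MP) traps $\bar H$ between two such frozen curves touching at the endpoints, and (AE) kills the $D_{F,2}H\cdot\underline{d}x_0+\underline{d}p\cdot f$ terms at the contact point, leaving only $\partial_1 H$. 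Your version carries this out directly for the Bolza Hamiltonian without passing through the Mayer reduction, which is slightly cleaner; your care about the curve chain rule (splitting the increment so that the Fr\'echet differentiation in the state variable happens at $(r,x_0(r))$, where (A{\sc i}1)/(A{\sc v}1) provide it, and the $t$-increment is handled by the added continuity of $\partial_1 f^0,\partial_1 f$) is exactly what the weakened hypotheses require, and your endgame---right derivative equal to $\varphi\in NPC_R^0$ forces $PC^1$ by integrating on each subinterval---is the standard completion.
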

We introduce other conditions.
\begin{itemize}
\item[\bf{(A{\sc v}3)}] $U$ is a subset of real normed vector space $Y$, there exists $\hat{t}\in [0,T]$ s.t. $U$ is a neighborhood of $u_0(\hat{t})$ in $Y$, $D_{G,3}f(\hat{t},x_0(\hat{t}),u_0(\hat{t}))$ exists and it is surjective
\end{itemize}
and 
\begin{itemize}
% A10
\item[{\bf(LI)}] $U$ is a subset of a real normed vector space $Y$ s.t. $U$ is a neighborhood of $u_0(T)$ in $Y$, $D_{G,3}f(T,x_0(T),u_0(T))$ exists and \\$((D_Hg^{\alpha}(x_0(T))\circ D_{G,3}f(T,x_0(T),u_0(T)))_{1\le \alpha\le m},\\(D_Hh^{\beta}(x_0(T))\circ D_{G,3}f(T,x_0(T),u_0(T)))_{1\le \beta \le q})$ is linearly free.
\end{itemize}
%%% Corollary 1.4
\begin{corollary}\label{cor13}
In the setting and under the assumptions of Theorem \ref{th21}, the following assertions hold.
\begin{itemize}
\item[(i)] Under (QC, 1) for $(x,u) = (x_0,u_0)$, we have for all $t \in [0,T]$, $(\lambda_0, p(t))\neq 0$. 
\item[(ii)] Under (QC, 1) for $(x,u) = (x_0,u_0)$ and (A{\sc v}3), we can choose $\lambda_0=1$. 
\item[(iii)] Under (LI), we can choose $\lambda_0=1$. 
\item[(iv)] Under (LI), if, in addition, we assume that $D_{G,3}f^0(T,x_0(T),u_0(T))$ exists, then $((\lambda_{\alpha})_{0 \leq \alpha \leq m}, (\mu_{\beta})_{1 \leq \beta \leq q},p)\in \R^{1 + m}\times\R^q\times PC^1([0,T], E^*)$ with $\lambda_0=1$, which satisfies the conclusions of Theorem \ref{th21}, are unique.
\end{itemize}
\end{corollary}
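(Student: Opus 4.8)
The plan is to treat the four assertions in turn; the common engine is that $\lambda_0=0$ would force the adjoint function to solve a \emph{homogeneous} linear differential equation.

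For (i), I would argue by contradiction: suppose $(\lambda_0,p(t_*))=0$ for some $t_*\in[0,T]$, so $\lambda_0=0$. Since $H_B(t,\xi,\zeta,p,0)=p\cdot f(t,\xi,\zeta)$, condition (AE.B) reads $\underline{d}p(t)=-p(t)\circ D_{F,2}f(t,x_0(t),u_0(t))$, a homogeneous linear equation whose coefficient $[t\mapsto D_{F,2}f(t,x_0(t),u_0(t))]$ is piecewise continuous by (A{\sc v}1) and the normalized piecewise continuity of $u_0$. On each subinterval on which the coefficient is continuous the classical uniqueness theorem for linear differential equations applies; since $p\in PC^1([0,T],E^*)$ is continuous on $[0,T]$, the vanishing $p(t_*)=0$ then propagates across the breakpoints, so $p\equiv0$ and in particular $p(T)=0$. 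Plugging this into (TC) with $\lambda_0=0$ gives $\sum_{\alpha=1}^m\lambda_\alpha D_Hg^\alpha(x_0(T))+\sum_{\beta=1}^q\mu_\beta D_Hh^\beta(x_0(T))=0$. A Hadamard differential is also the G\^ateaux differential, so the pair $((\lambda_\alpha)_{1\le\alpha\le m},(\mu_\beta)_{1\le\beta\le q})$ --- which satisfies $\lambda_\alpha\ge0$ by (Si) and $\lambda_\alpha g^\alpha(x_0(T))=0$ by (S$\ell$) --- meets the hypotheses of (QC, 1), whence $\lambda_\alpha=0$ for $1\le\alpha\le m$ and $\mu_\beta=0$ for $1\le\beta\le q$; together with $\lambda_0=0$ this contradicts (NN).

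For (ii) and (iii) I would first note that all conclusions of Theorem \ref{th21} are positively homogeneous in $((\lambda_\alpha)_\alpha,(\mu_\beta)_\beta,p,\lambda_0)$ (because $H_B$ is linear in $(p,\lambda)$), so once one knows $\lambda_0\neq0$ --- hence $\lambda_0>0$ by (Si) --- dividing the whole tuple by $\lambda_0$ yields $\lambda_0=1$ while preserving (NN), (Si), (S$\ell$), (TC), (AE.B), (MP.B) and (CH.B). So it suffices to rule out $\lambda_0=0$. For (ii): by (i), $p(t)\neq0$ for every $t$; since $U$ is a neighbourhood of $u_0(\hat t)$ in $Y$, (MP.B) at $t=\hat t$ says $[\zeta\mapsto p(\hat t)\cdot f(\hat t,x_0(\hat t),\zeta)]$ has an interior maximum at $u_0(\hat t)$, so its G\^ateaux differential vanishes, i.e. $p(\hat t)\circ D_{G,3}f(\hat t,x_0(\hat t),u_0(\hat t))=0$; since $D_{G,3}f(\hat t,x_0(\hat t),u_0(\hat t))$ is onto $E$, this forces $p(\hat t)=0$, a contradiction. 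For (iii): the same interior-maximum argument at $t=T$ gives $p(T)\circ D_{G,3}f(T,x_0(T),u_0(T))=0$; substituting $p(T)$ from (TC) with $\lambda_0=0$ yields $\sum_{\alpha=1}^m\lambda_\alpha\big(D_Hg^\alpha(x_0(T))\circ D_{G,3}f(T,x_0(T),u_0(T))\big)+\sum_{\beta=1}^q\mu_\beta\big(D_Hh^\beta(x_0(T))\circ D_{G,3}f(T,x_0(T),u_0(T))\big)=0$, so the linear freeness in (LI) forces all $\lambda_\alpha$ $(1\le\alpha\le m)$ and all $\mu_\beta$ to vanish; with $\lambda_0=0$ this contradicts (NN).

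For (iv), existence with $\lambda_0=1$ is Theorem \ref{th21} combined with (iii); I would prove uniqueness as follows. Fix $\lambda_0=1$ and let $((\lambda_\alpha)_\alpha,(\mu_\beta)_\beta,p)$ realize the conclusions. With $\lambda_0=1$, (AE.B) is the \emph{affine} linear equation $\underline{d}p(t)=-D_{F,2}f^0(t,x_0(t),u_0(t))-p(t)\circ D_{F,2}f(t,x_0(t),u_0(t))$, so, exactly as in (i), $p$ on $[0,T]$ is uniquely determined by the terminal value $p(T)$. To pin down $p(T)$: $D_{G,3}f^0(T,x_0(T),u_0(T))$ exists by hypothesis and $D_{G,3}f(T,x_0(T),u_0(T))$ exists by (LI), so the interior-maximum argument at $t=T$ with $\lambda_0=1$ gives $D_{G,3}f^0(T,x_0(T),u_0(T))+p(T)\circ D_{G,3}f(T,x_0(T),u_0(T))=0$; substituting $p(T)$ from (TC) with $\lambda_0=1$ and isolating the terms with $\alpha\ge1$ leaves
\begin{align*}
&\sum_{\alpha=1}^m\lambda_\alpha\big(D_Hg^\alpha(x_0(T))\circ D_{G,3}f(T,x_0(T),u_0(T))\big)+\sum_{\beta=1}^q\mu_\beta\big(D_Hh^\beta(x_0(T))\circ D_{G,3}f(T,x_0(T),u_0(T))\big)\\
&\qquad=-D_{G,3}f^0(T,x_0(T),u_0(T))-D_Hg^0(x_0(T))\circ D_{G,3}f(T,x_0(T),u_0(T)),
\end{align*}
a fixed element of $Y^*$. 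By (LI) the left-hand side is an injective linear function of $((\lambda_\alpha)_{1\le\alpha\le m},(\mu_\beta)_{1\le\beta\le q})$, so these scalars are uniquely determined; then $p(T)$ is determined by (TC) and $p$ by the differential equation, giving uniqueness.

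The main obstacle, which appears in (i) and (iv), is the uniqueness theorem for the Banach-space-valued, merely piecewise-$C^1$ linear differential equation satisfied by $p$: one must split $[0,T]$ at the breakpoints of $u_0$, apply the classical result on each subinterval where the coefficients are continuous, and glue using the continuity of $p$ on all of $[0,T]$. The remaining points --- identifying the Hadamard differentials of Theorem \ref{th21} with G\^ateaux differentials so that (QC, 1) and (LI) apply, and checking the positive homogeneity used in the rescaling --- are routine.
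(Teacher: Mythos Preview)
Your proposal is correct and follows essentially the same route as the paper: contradiction via the homogeneous adjoint equation and (QC,1) for (i), the interior-maximum/surjectivity argument at $\hat t$ for (ii), the interior-maximum at $T$ combined with (TC) and (LI) for (iii), and in (iv) the first-order condition at $T$ plus (TC) to pin down the multipliers via (LI), then terminal-value uniqueness for the affine adjoint equation. The only cosmetic difference is that in (iv) you phrase uniqueness as injectivity of the coefficient map onto a fixed right-hand side, whereas the paper takes two solutions and subtracts; these are of course equivalent.
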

%%%%%%%%%%%%%%%%%%%%%%%%%%%%%%%%%%%%%%%%%%%%%%%%%%%%%%%%%%%%%%%%%%%%%%%%%%%%%%%%%%%%%%%%%%%%%%%%%%%%%%%%%%%%%%%%%%%%%%%%%%%%%%%%%%%%%
\subsection{Pontryagin principles for the problem of Mayer}
The Hamiltonian of (${\mathcal M}$) is the function $H_M :[0,T]\times \Omega \times U\times E^* \rightarrow \R$ defined by $H_M(t,\xi,\zeta,p) :=  p \cdot f(t,\xi,\zeta)$ when $(t,\xi,\zeta,p)\in [0,T]\times \Omega \times U\times E^*$.
%theorem 1.2
\begin{theorem}\label{th22}(Pontryagin Principle for the problem of Mayer)\\
When $(x_0,u_0)$ is a solution of $(\mathcal{M})$, under (A{\sc v}1), (A{\sc v}2), (A{\sc t}1) and (A{\sc t}2), there exist multipliers $(\lambda_{\alpha})_{0 \leq \alpha \leq m} \in \R^{1 + m}$, $(\mu_{\beta})_{1 \leq \beta \leq q} \in \R^q$ and an adjoint function $p \in PC^1([0,T], E^*)$ which satisfy the following conditions.
\begin{itemize}
\item[(NN)] $((\lambda_{\alpha})_{0 \leq \alpha \leq m},(\mu_{\beta})_{1 \leq \beta \leq q})$ is non zero.
\item[(Si)] For all $\alpha \in \{0,...,m \}$, $\lambda_{\alpha} \geq 0$.
\item[(S${\ell}$)] For all $\alpha \in \{1,...,m \}$, $\lambda_{\alpha} g^{\alpha}(x_0(T)) = 0$.
\item[(TC)] $\sum_{\alpha = 0}^{m} \lambda_{\alpha} D_Hg^{\alpha}(x_0(T)) + \sum_{\beta = 1}^q \mu_{\beta} D_Hh^{\beta}(x_0(T)) = p(T)$.
\item[(AE.M)] $\underline{d}p(t) = - D_{F,2}H_M(t, x_0(t),u_0(t), p(t))$ for all $t \in [0,T]$.
\item[(MP.M)] For all $t \in [0,T]$, for all $\zeta \in U$, \\
$H_M(t, x_0(t), u_0(t), p(t)) \geq H_M(t, x_0(t), \zeta, p(t))$.
\item[(CH.M)] $\bar{H}_M := [ t \mapsto H_M(t, x_0(t), u_0(t), p(t))] \in C^0([0,T], \R)$.
\end{itemize} 
\end{theorem}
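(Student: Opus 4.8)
The plan is to prove Theorem \ref{th22} via a standard reduction to a finite-dimensional multiplier rule, exploiting the special structure of the Mayer problem (no running cost). I would first set up the relevant Banach spaces: let $\mathcal{X} := PC^1([0,T],E)$ (or an open subset mapping into $\Omega$) and regard the control $u_0$ as fixed in a first stage, then perturb it through needle (spike) variations. The key object is the map $\Phi$ sending a process $(x,u)$ to the tuple $\bigl(\underline{d}x - f(\cdot,x,u),\, x(0)-\xi_0,\, -g^\alpha(x(T)),\, h^\beta(x(T))\bigr)$, together with the objective $(x,u)\mapsto g^0(x(T))$. Because $f$ is only continuous in $(t,\zeta)$ and merely G\^ateaux-differentiable in $\xi$ off the trajectory, while being Fr\'echet-differentiable in $\xi$ \emph{along} $x_0$ with the derivative continuous in $(t,\zeta)$ by (A\textsc{v}1), the linearized state equation $\underline{d}y(t) = D_{F,2}f(t,x_0(t),u_0(t))\cdot y(t) + (\text{perturbation})(t)$, $y(0)=0$, is a well-posed linear ODE in $NPC^0_R([0,T],E)$; its fundamental solution (resolvent) $R(t,s)$ exists and is the tool that propagates both classical variations of the state and the jumps produced by needle variations of $u_0$.

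The core argument I would run is: (i) Build the set of \emph{attainable variations} at time $T$. For a classical variation one takes an admissible perturbation of the state equation and reads off $y(T) = \int_0^T R(T,s)(\cdots)\,ds$; for a needle variation of $u_0$ at a Lebesgue point $s$ of continuity, replacing $u_0$ by a fixed value $\zeta\in U$ on an interval of length $\varepsilon$ produces, to first order, the variation $R(T,s)\bigl(f(s,x_0(s),\zeta)-f(s,x_0(s),u_0(s))\bigr)$; conically combining these gives a convex cone $K\subset E$ of first-order attainable directions. (ii) Optimality of $(x_0,u_0)$ means this cone, when paired with the linearized objective $D_Hg^0(x_0(T))$ and the linearized active inequality/equality constraints $D_Hg^\alpha(x_0(T))$ ($\alpha$ active), $D_Hh^\beta(x_0(T))$, cannot contain a direction strictly improving the objective while remaining feasible — a separation/Hahn–Banach argument (or directly the multiplier rule from \cite{BL} that the paper advertises as improved in Subsection 2.3) yields multipliers $\lambda_0,\dots,\lambda_m\ge 0$, $\mu_1,\dots,\mu_q\in\R$, not all zero, with the slackness (S$\ell$) and a variational inequality on $K$. (iii) Define $p\in PC^1([0,T],E^*)$ as the solution of the adjoint equation $\underline{d}p(t) = -D_{F,2}H_M(t,x_0(t),u_0(t),p(t)) = -p(t)\circ D_{F,2}f(t,x_0(t),u_0(t))$ with terminal value $p(T) := \sum_\alpha \lambda_\alpha D_Hg^\alpha(x_0(T)) + \sum_\beta \mu_\beta D_Hh^\beta(x_0(T))$; this is precisely (TC), and $p$ lies in $PC^1$ because the right-hand side lies in $NPC^0_R$ (the coefficient is continuous in $t$ along $x_0$ by (A\textsc{v}1) except at the finitely many break points of $u_0$). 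The adjoint then satisfies $\frac{d}{dt}\bigl(p(t)\cdot y(t)\bigr) = p(t)\cdot(\text{perturbation})(t)$ along linearized trajectories, so the abstract variational inequality on $K$ transforms, for each needle direction, into $p(s)\cdot f(s,x_0(s),\zeta) \le p(s)\cdot f(s,x_0(s),u_0(s))$ at every continuity point $s$, i.e. (MP.M) on a dense set; right-continuity of $u_0$ and $p$ (normalization) plus continuity of $f$ in $(t,\zeta)$ upgrade this to all $t\in[0,T]$.

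For the remaining items: (NN) is inherited directly from the nontriviality of the finite-dimensional multipliers together with the fact that $p$ is determined by them through (TC); (Si) and (S$\ell$) come from the sign and complementarity built into the multiplier rule for the inequality constraints $-g^\alpha\le 0$; (CH.M) follows because $\bar H_M(t) = p(t)\cdot f(t,x_0(t),u_0(t))$ and, although $p$, $x_0$ and $u_0$ each may jump at a break point $\tau_i$, the maximum principle (MP.M) forces $p(\tau_i+)\cdot f(\tau_i,x_0(\tau_i),u_0(\tau_i+)) = p(\tau_i-)\cdot f(\tau_i,x_0(\tau_i),u_0(\tau_i-))$ — the classical Weierstrass–Erdmann corner condition — which is exactly the continuity of $\bar H_M$ at $\tau_i$; away from break points continuity is clear.

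The main obstacle, and where I would spend the most care, is justifying the first-order expansions in the low-regularity setting: the vector field $f$ is only G\^ateaux-differentiable in the state off $x_0$, with a mere local-boundedness hypothesis (A\textsc{v}2) rather than continuity, so the usual mean-value/Taylor estimates controlling the remainder of a state variation must be replaced by an integrated argument using the fundamental inequality $x(t)-x(s) = \int_s^t \underline{d}x$, the bound (A\textsc{v}2) on compact tubes around $x_0$, and Gronwall, to show that the perturbed trajectories stay in $\Omega$ for small $\varepsilon$ and that the remainder is $o(\varepsilon)$ \emph{uniformly}. This is precisely the place where (A\textsc{v}1) (Fr\'echet differentiability \emph{on} $x_0$ with $(t,\zeta)$-continuous derivative) is indispensable and where the improvement over \cite{BY} is realized; once this uniform smallness is in hand, the separation argument and the adjoint computation are routine. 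I would carry it out in the order: (1) well-posedness and resolvent estimates for the linearized equation; (2) construction and first-order expansion of classical and needle variations with uniform $o(\varepsilon)$ control; (3) the finite-dimensional multiplier rule / separation; (4) definition of $p$ and verification of (TC), (AE.M), (NN), (Si), (S$\ell$); (5) translation of the variational inequality into (MP.M) at continuity points and extension to all of $[0,T]$; (6) the corner condition giving (CH.M). As announced in the introduction, steps (3)–(6) would be organized around the improved multiplier rule of Subsection 2.3, with the Bolza case then obtained in Subsection 2.4 by the classical device of appending the state coordinate $x^0(t) := \int_0^t f^0$ to turn $(\mathcal B)$ into a Mayer problem.
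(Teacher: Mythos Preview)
Your plan is correct in outline and hits all the technical pressure points the paper worries about (the low-regularity first-order expansion along $x_0$, the resolvent $R(T,\cdot)$ transporting needle directions, the adjoint defined by the terminal value (TC), the corner argument for (CH.M)). It differs from the paper's organization in one substantive way worth flagging.

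You propose to assemble \emph{all} needle directions into a single convex cone $K\subset E$ and separate once. The paper instead fixes a \emph{finite} list $S=((t_i,v_i))_{1\le i\le N}$, shows that $a\mapsto x_a(T)$ extends from $\R^N_+$ to a $C^0$ map $\tilde{x}$ on a full ball that is Fr\'echet differentiable at $0$ (this extension, borrowed from \cite{BY}, is what makes the \emph{equality} constraints $h^\beta$ tractable under mere Hadamard differentiability), reduces to a finite-dimensional problem with constraints $g^\alpha(\tilde x(a))\ge 0$, $h^\beta(\tilde x(a))=0$, $a_i\ge 0$, and applies the multiplier rule of \cite{HY} to get multipliers $(\lambda^S,\mu^S)$ depending on $S$. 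It then passes to multipliers independent of $S$ by the finite-intersection property of the normalized sets $K(S)$ on the compact sphere of $\R^{1+m+q}$. Your single-separation route is cleaner conceptually but would require you to (a) handle the equality constraints without an open-mapping/surjectivity hypothesis on $D_Hh^\beta$, which is delicate in an infinite-dimensional $E$, and (b) make sure your cone is rich enough; the paper's finite-dimensional reduction plus compactness sidesteps both issues with no extra hypotheses. Two minor remarks: there are no ``classical variations'' of the state here since $x(0)=\xi_0$ is fixed and $x$ is determined by $u$, so you can drop that part; and the multiplier rule actually invoked is the one from \cite{HY}, not \cite{BL}, precisely because only Hadamard differentiability of $g^\alpha,h^\beta$ is assumed.
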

%Corollary 1.2
\begin{corollary}\label{cor22}
In the setting and under the assumptions of Theorem \ref{th22}, if in addition we assume that, for all $(t,\xi, \zeta) \in [0,T] \times \Omega \times U$, the partial derivatives with respect to the first variable $\partial_1f(t, \xi, \zeta)$ exist and $\partial_1f$ are continuous on $[0,T] \times \Omega \times U$, then $\bar{H}_M \in PC^1([0,T], \R)$ and, for all $t \in [0,T]$, $\underline{d}\bar{H}_M(t) = \partial_1 H_M(t, x_0(t), u_0(t), p(t))$.
\end{corollary}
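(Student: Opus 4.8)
The plan is to exploit the subdivision structure together with the maximum principle (MP.M) used as an ``envelope'' device: since the control $u_0$ is only piecewise continuous, the map $t\mapsto f(t,x_0(t),u_0(t))$ is in general not differentiable and a naive chain rule is unavailable. First I would fix a subdivision $0=\tau_0<\dots<\tau_{k+1}=T$ adapted simultaneously to $x_0\in PC^1([0,T],\Omega)$, to $p\in PC^1([0,T],E^*)$ and to $u_0\in NPC^0_R([0,T],U)$, so that on each open interval $(\tau_i,\tau_{i+1})$ the maps $x_0$ and $p$ are of class $C^1$ (hence $\underline{d}x_0=x_0'$ and $\underline{d}p=p'$ are continuous there) and $u_0$ is continuous. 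Since $\bar{H}_M$ is continuous on $[0,T]$ by (CH.M), it suffices to show that on each such interval $\bar{H}_M$ is of class $C^1$ with derivative $t\mapsto\partial_1H_M(t,x_0(t),u_0(t),p(t))$, and that the corresponding one-sided derivatives exist at the points $\tau_i$; the computation at an endpoint $\tau_i$ is identical to the one at an interior point, using the one-sided limit of $u_0$ there (equal to $u_0(\tau_i)$ on the right, and equal to $u_0(T)$ on the left at $T$) and the right/left $C^1$-regularity of $x_0$ and $p$.

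The first step is, for a fixed $\zeta\in U$, to differentiate at an interior point $t$ the auxiliary scalar map $\varphi_\zeta(t'):=p(t')\cdot f(t',x_0(t'),\zeta)$. Splitting $f(t+s,x_0(t+s),\zeta)-f(t,x_0(t),\zeta)$ into the ``$t$-increment'' $f(t+s,x_0(t+s),\zeta)-f(t,x_0(t+s),\zeta)$, handled by the mean value theorem and the continuity of $\partial_1f$, and the ``$x$-increment'' $f(t,x_0(t+s),\zeta)-f(t,x_0(t),\zeta)$, handled by the Fr\'echet differential $D_{F,2}f(t,x_0(t),\zeta)$ (which exists and is continuous in $(t,\zeta)$ by (A{\sc v}1)), and combining this with the bilinearity and continuity of the duality pairing and with the differentiability of $p$ and of $x_0$ at $t$, one obtains that $\varphi_\zeta$ is differentiable at $t$ with
\[
\varphi_\zeta'(t)=\underline{d}p(t)\cdot f(t,x_0(t),\zeta)+p(t)\cdot\partial_1f(t,x_0(t),\zeta)+p(t)\cdot\big(D_{F,2}f(t,x_0(t),\zeta)\cdot\underline{d}x_0(t)\big).
\]
The key simplification occurs for $\zeta=u_0(t)$: substituting the state equation $\underline{d}x_0(t)=f(t,x_0(t),u_0(t))$ and the adjoint equation (AE.M), which here reads $\underline{d}p(t)=-\,p(t)\circ D_{F,2}f(t,x_0(t),u_0(t))$, the two terms containing $D_{F,2}f$ cancel and there remains $\varphi_{u_0(t)}'(t)=p(t)\cdot\partial_1f(t,x_0(t),u_0(t))=\partial_1H_M(t,x_0(t),u_0(t),p(t))=:L(t)$.

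The second step sandwiches the difference quotient of $\bar{H}_M$ at $t$. For small $s\neq0$: applying (MP.M) at time $t+s$ with the admissible competitor $u_0(t)$ gives $\bar{H}_M(t+s)\ge\varphi_{u_0(t)}(t+s)$, while $\bar{H}_M(t)=\varphi_{u_0(t)}(t)$; applying (MP.M) at time $t$ with the competitor $u_0(t+s)$ gives $\bar{H}_M(t)\ge\varphi_{u_0(t+s)}(t)$, while $\bar{H}_M(t+s)=\varphi_{u_0(t+s)}(t+s)$. Dividing by $s$, distinguishing the two signs of $s$, and using $\varphi_{u_0(t)}(t+s)-\varphi_{u_0(t)}(t)=L(t)s+o(s)$ from Step~1 for one of the two bounds, the problem reduces to showing that $\frac{1}{s}\int_t^{t+s}\varphi_{u_0(t+s)}'(r)\,dr\to L(t)$ as $s\to0$. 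Here $\varphi_{u_0(t+s)}$ is of class $C^1$ on $[t,t+s]$ by the argument of Step~1 applied at every point $r$ of that interval (using that $D_{F,2}f(r,x_0(r),\zeta)$ exists for every $\zeta\in U$, again by (A{\sc v}1), and that $\underline{d}x_0$ and $\underline{d}p$ are continuous there); and since $u_0(t+s)\to u_0(t)$, the joint continuity of $\partial_1f$, of $(t,\zeta)\mapsto D_{F,2}f(t,x_0(t),\zeta)$ and of $f$, together with the continuity of $p,\underline{d}p,x_0,\underline{d}x_0$ on the interval, yields $\sup_{r\in[t,t+s]}|\varphi_{u_0(t+s)}'(r)-L(t)|\to0$, whence the claimed limit. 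Therefore $\bar{H}_M$ is differentiable at $t$ with $\bar{H}_M'(t)=L(t)$; as $t\mapsto L(t)$ is continuous on $(\tau_i,\tau_{i+1})$ and admits one-sided limits at the $\tau_i$, and as $\bar{H}_M$ is continuous on $[0,T]$, we conclude $\bar{H}_M\in PC^1([0,T],\R)$ with $\underline{d}\bar{H}_M(t)=\partial_1H_M(t,x_0(t),u_0(t),p(t))$ for all $t\in[0,T]$.

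I expect the main obstacle to be the passage to the limit in $\frac{1}{s}\int_t^{t+s}\varphi_{u_0(t+s)}'(r)\,dr$ in Step~2: the integrand depends on the frozen control value $u_0(t+s)$, which itself moves with the increment $s$, so one cannot simply invoke the continuity of a single fixed function, and the uniform estimate over $r\in[t,t+s]$ must be extracted from the joint continuity of the data near $(t,u_0(t))$, combined with the shrinking of $[t,t+s]$ to $\{t\}$ and with $u_0(t+s)\to u_0(t)$. The cancellation of the two $D_{F,2}f$-terms via the adjoint equation in Step~1 is the other point calling for care, but it is a direct computation once the chain rule for $\varphi_\zeta$ has been established.
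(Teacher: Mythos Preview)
Your argument is correct and is precisely the classical envelope/Danskin-type proof: freeze the control, differentiate the auxiliary map $\varphi_\zeta$, observe the cancellation produced by (AE.M) and the state equation at $\zeta=u_0(t)$, and then squeeze the difference quotient of $\bar H_M$ between $\varphi_{u_0(t)}$ and $\varphi_{u_0(t+s)}$ using (MP.M). The paper does not reprove this result but simply records that the proof is ``similar to the proof of Part (II) of Theorem 2.2 in \cite{BY} which is given in Subsection 5.3 in \cite{BY}''; that reference carries out exactly the argument you wrote (the only novelty here being that the standing hypotheses (A{\sc v}1)--(A{\sc v}2) are slightly weaker than in \cite{BY}, which your proof accommodates since you only invoke $D_{F,2}f$ along the curve $x_0$ and the continuity of $(t,\zeta)\mapsto D_{F,2}f(t,x_0(t),\zeta)$). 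Your identification of the delicate point---the uniform control of $\varphi_{u_0(t+s)}'(r)$ over $r\in[t,t+s]$ via the joint continuity of $\partial_1 f$, $f$, $(t,\zeta)\mapsto D_{F,2}f(t,x_0(t),\zeta)$ together with the continuity of $\underline dp$, $\underline dx_0$ on each subinterval and $u_0(t+s)\to u_0(t)$---is exactly where the work lies, and your treatment of it is sound.
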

\begin{corollary}\label{cor23}
In the setting and under the assumptions of Theorem \ref{th22}, the following assertions hold.
\begin{itemize}
\item[(i)] Under (QC, 1) for $(x,u) = (x_0,u_0)$, we have for all $t \in [0,T]$, $(\lambda_0, p(t))\neq 0$. 
\item[(ii)] Under (QC, 0) for $(x,u) = (x_0,u_0)$, we have for all $t \in [0,T]$, $p(t)\neq 0$. 
\item[(iii)] Under (QC, 1) for $(x,u) = (x_0,u_0)$ and (A{\sc v}3), we can choose $\lambda_0=1$. 
\item[(iv)] Under (LI), the $((\lambda_{\alpha})_{0 \leq \alpha \leq m}, (\mu_{\beta})_{1 \leq \beta \leq q},p)\in \R^{1 + m}\times\R^q\times PC^1([0,T], E^*)$ with $\lambda_0=1$, which satisfies the conclusions of Theorem \ref{th22}, are unique.
\end{itemize}
\end{corollary}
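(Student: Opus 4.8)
The plan is to establish the four assertions in turn, all of them resting on two observations. First, since $H_M(t,\xi,\zeta,p)=p\cdot f(t,\xi,\zeta)$ carries no $\lambda_0$, the adjoint equation (AE.M) is the \emph{homogeneous linear} equation $\underline{d}p(t)=-\,p(t)\circ D_{F,2}f(t,x_0(t),u_0(t))$ in $E^*$, whose coefficient $t\mapsto D_{F,2}f(t,x_0(t),u_0(t))$ is piecewise continuous by (A{\sc v}1); hence, applying the uniqueness theorem for linear ODEs on each subinterval of a common refinement of the subdivisions of $p$ and of this coefficient, and using that $p$ is continuous on $[0,T]$ to pass from one subinterval to the next, one obtains that if $p(t^*)=0$ for a single $t^*\in[0,T]$, then $p\equiv 0$ on $[0,T]$. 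Second, by (A{\sc t}1)--(A{\sc t}2) the functions $g^\alpha$ and $h^\beta$ are Hadamard, hence G\^ateaux, differentiable at $x_0(T)$ with coinciding differentials, so (TC) may be read with $D_G$ in place of $D_H$ and the qualification conditions (QC, $i$) apply verbatim.

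For (i), I would argue by contradiction: if $(\lambda_0,p(t^*))=0$, then $\lambda_0=0$ and, by the first observation, $p\equiv 0$, so $p(T)=0$; then (TC) with $\lambda_0=0$ reads $\sum_{\alpha=1}^m\lambda_\alpha D_Gg^\alpha(x_0(T))+\sum_{\beta=1}^q\mu_\beta D_Gh^\beta(x_0(T))=0$, and together with (Si) and (S$\ell$) condition (QC, 1) forces $\lambda_\alpha=0$ for $1\le\alpha\le m$ and $\mu_\beta=0$ for $1\le\beta\le q$, which with $\lambda_0=0$ contradicts (NN). Assertion (ii) is the same argument: assuming $p(t^*)=0$ gives $p\equiv 0$, so (TC) reads $\sum_{\alpha=0}^m\lambda_\alpha D_Gg^\alpha(x_0(T))+\sum_{\beta=1}^q\mu_\beta D_Gh^\beta(x_0(T))=0$, and (QC, 0), with (Si) and (S$\ell$), annihilates all multipliers, again contradicting (NN).

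For (iii), since $\lambda_0\ge 0$ by (Si), it suffices to prove $\lambda_0\neq 0$; one then divides $(\lambda_\alpha)_\alpha$, $(\mu_\beta)_\beta$ and $p$ by $\lambda_0>0$, which preserves (NN), (Si), (S$\ell$), (TC), (CH.M), and also (AE.M), (MP.M) since the latter do not involve $\lambda_0$. Suppose $\lambda_0=0$; by (i), $p(\hat t)\neq 0$. By (MP.M) at $t=\hat t$, the map $\zeta\mapsto p(\hat t)\cdot f(\hat t,x_0(\hat t),\zeta)$ attains a maximum over $U$ at the interior point $u_0(\hat t)$, so its G\^ateaux differential there, the bounded linear functional $p(\hat t)\circ D_{G,3}f(\hat t,x_0(\hat t),u_0(\hat t))$, vanishes; surjectivity of $D_{G,3}f(\hat t,x_0(\hat t),u_0(\hat t))$ from (A{\sc v}3) then gives $p(\hat t)=0$, a contradiction.

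For (iv), the same interior-maximum computation at $t=T$ (valid because $U$ is a neighborhood of $u_0(T)$ in (LI)) gives $p(T)\circ D_{G,3}f(T,x_0(T),u_0(T))=0$ for every triple furnished by Theorem~\ref{th22}; composing (TC) on the right with $D_{G,3}f(T,x_0(T),u_0(T))$ turns this into $\sum_{\alpha=0}^m\lambda_\alpha\bigl(D_Hg^\alpha(x_0(T))\circ D_{G,3}f(T,x_0(T),u_0(T))\bigr)+\sum_{\beta=1}^q\mu_\beta\bigl(D_Hh^\beta(x_0(T))\circ D_{G,3}f(T,x_0(T),u_0(T))\bigr)=0$. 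If $\lambda_0=0$, only the terms with $\alpha\ge 1$ and $\beta\ge 1$ survive, and the linear freeness in (LI) forces the remaining multipliers to vanish, contradicting (NN); so $\lambda_0\neq 0$ and we normalize $\lambda_0=1$ as in (iii). Given two such triples $((\lambda_\alpha),(\mu_\beta),p)$ and $((\lambda'_\alpha),(\mu'_\beta),p')$ with $\lambda_0=\lambda'_0=1$, subtracting their versions of the displayed relation cancels the $\alpha=0$ term, and (LI) yields $\lambda_\alpha=\lambda'_\alpha$ and $\mu_\beta=\mu'_\beta$ for all $\alpha,\beta$; by (TC) this gives $p(T)=p'(T)$, and since $p-p'$ solves the homogeneous linear equation (AE.M) with value $0$ at $T$, the first observation gives $p\equiv p'$. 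The step I expect to be most delicate is the ODE-uniqueness observation — matching the piecewise-$C^1$ adjoint function across the subdivision points of $p$ and of the only piecewise-continuous coefficient $t\mapsto D_{F,2}f(t,x_0(t),u_0(t))$ in the Banach space $E^*$ — together with the routine but worth-stating point that a G\^ateaux differential vanishes at an interior maximum even when no Fr\'echet differentiability is assumed.
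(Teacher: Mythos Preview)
Your proof is correct and follows essentially the same route as the paper: the same contradiction arguments in (i)--(ii) via homogeneity of (AE.M) together with (TC), (Si), (S$\ell$) and the relevant (QC,$i$); the same use of (MP.M) at an interior point to kill $p(\hat t)$ through surjectivity in (iii); and the same (LI)-based argument in (iv), first ruling out $\lambda_0=0$ and then subtracting two normalized triples, with the uniqueness of the Cauchy problem on (AE.M) finishing $p=p'$. The paper states these steps more tersely (and refers to \cite{BY} for (ii)), but the logic is identical; your added remarks on piecewise matching for the linear ODE and on the vanishing of the G\^ateaux differential at an interior maximum are sound and merely make explicit what the paper takes for granted.
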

%%%%%%%%%%%%%%%%%%%%%%%%%%%%%%%%%%%%%%%%%%%%%%%%%%%%%%%%%%%%%%%%%%%%%%%%%%%%%%%%%%%%%%%%%%%%%%%%%%%%%%%%%%%%%%
\subsection{Proofs of results for the problem of Mayer}
We consider \\$S := ((t_i,v_i))_{1 \leq i \leq N}$ where $t_i \in [0,T]$ satisfying $0 < t_1 \leq t_2 \leq ... \leq t_N < T$, where $v_i \in U$ and where $N \in \N_*$. We denote by $\S$ the set of such $S$.
\vskip1mm
\noindent 
When $S \in \S$ and $a = (a_1, ..., a_N) \in \R^N_+$, we define the following objects: $J(i) = J(i, S) := \{ j \in \{ 1,...,i-1 \} : t_j = t_i \}$, $b_i(a) = b_i(a,S) := 0$ if $J(i) = \emptyset$ and $b_i(a) = b_i(a,S)=\sum_{j \in J(i)} a_j$  if $J(i) \neq \emptyset$.
\\
We also define $I_i(a) = I_i(a,S) := [ t_i + b_i(a), t_i+  b_i(a) + a_i [$.\\ 
We define $\delta(S)=\min \{ \ t_{i+1}-t_i \ : i\in \{1,...,N-1\}, \ t_i<t_{i+1} \}$ and \\
$\|a\|_1=\sum_{i=1}^N \mid a_i \mid=\sum_{i=1}^N a_i \le \delta(S) $.\\
When $a\in B_{\|\cdot\|_1}(0,\delta(S))\cap \R^N_+$, we have $I_i(a) \subset [0,T]$ and $I_i(a) \cap I_j(a) = \emptyset$ when $i \neq j$ and we can define the needlelike variation of $u_0$:
\begin{equation}\label{eq31}
u_a(t) = u_a(t,S) := 
\left\{
\begin{array}{ccl}
v_i & {\rm if} & t \in I_i(a), 1 \leq i \leq N\\
u_0(t) & {\rm if} & t \in [0,T] \setminus \cup_{1 \leq i \leq N} I_i(a).
\end{array}
\right.
\end{equation}
It is easy to verify that 
\begin{equation} 
u_a=u_a(\cdot,S) \in NPC^0_R([0,T],U).
\end{equation}
We associate to the control function $u_a$ the non extendable solution $x_a$ of the Cauchy problem on $[0,T]$.
\begin{equation}\label{eq32}
\underline{d}x_a(t) = f(t,x_a(t), u_a(t)),\; \;  x_a(0) = \xi_0.
\end{equation}
In the sequel of this subsection, we arbitrarily fix a list $S = ((t_i, v_i))_{1 \leq i \leq N}$ in $\S$.
\vskip1mm 
\noindent
\begin{lemma}\label{lem11}
Let $X$ be a metric space, $Y$ be a non empty set, $Z$ be a real normed vector space and $\phi :X\times Y\rightarrow Z$ be a mapping. We assume that: for all non-empty compact subset $K$ of $X$, we have $\sup_{(x,y)\in K\times Y} \|\phi(x,y)\| <+\infty.$\\
Then, for all non-empty compact subset $K$ of $X$, there exists $\rho>0$ s.t. \\
$\sup_{(x,y)\in V(K,\rho)\times Y} \|\phi(x,y)\| <+\infty$,\\
where $V(K,\rho):=\{z\in X :d(z,K):=\inf_{k\in K} d(z,k)<\rho\}$.
\end{lemma}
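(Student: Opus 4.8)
The plan is to prove the statement by contradiction, extracting a bad sequence and using compactness of a slightly fattened neighborhood of $K$. First I would suppose the conclusion fails for some non-empty compact $K \subset X$: then for every $n \in \N_*$ there is no $\rho > 0$ making the supremum over $V(K,\rho) \times Y$ finite, so in particular for $\rho = 1/n$ the supremum over $V(K,1/n) \times Y$ is $+\infty$. Hence I can choose $x_n \in V(K,1/n)$ and $y_n \in Y$ with $\|\phi(x_n,y_n)\| \geq n$. By definition of $V(K,1/n)$ there is $k_n \in K$ with $d(x_n,k_n) < 1/n$.

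Next I would invoke compactness of $K$ to pass to a subsequence along which $k_{n_j} \to k_\infty \in K$; since $d(x_{n_j},k_{n_j}) < 1/n_j \to 0$, the triangle inequality gives $x_{n_j} \to k_\infty$ as well. Now the set $\widetilde{K} := \{x_{n_j} : j \in \N_*\} \cup \{k_\infty\}$ is a convergent sequence together with its limit, hence a compact subset of $X$. Applying the hypothesis to $\widetilde{K}$ yields $M := \sup_{(x,y) \in \widetilde{K} \times Y} \|\phi(x,y)\| < +\infty$. But $(x_{n_j}, y_{n_j}) \in \widetilde{K} \times Y$ for every $j$, so $n_j \leq \|\phi(x_{n_j},y_{n_j})\| \leq M$ for all $j$, which is absurd since $n_j \to +\infty$. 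This contradiction proves the lemma.

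I do not anticipate a serious obstacle here; the only points requiring a little care are the two routine facts that a convergent sequence together with its limit is compact in a metric space, and that the hypothesis is genuinely being applied to a legitimate compact set (so that the uniformity in $y \in Y$ is available for the specific pairs $(x_{n_j},y_{n_j})$). The role of the metric (rather than merely topological) structure of $X$ is exactly to make sense of $V(K,\rho)$ and to get sequential compactness; everything else is elementary. If one prefers to avoid subsequences, an alternative is to note directly that $\{x_n : n \in \N_*\}$ is relatively compact because it is contained in $V(K,1) $ and accumulates only on $K$, but the subsequence argument above is the cleanest route and is what I would write out.
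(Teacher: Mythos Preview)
Your proof is correct and follows essentially the same approach as the paper's: both proceed by contradiction, extract a sequence $(x_n,y_n)\in V(K,1/n)\times Y$ with $\|\phi(x_n,y_n)\|$ unbounded, use compactness of $K$ to get a convergent subsequence $x_{n_j}\to k_\infty\in K$, and then apply the hypothesis to the compact set $\{x_{n_j}\}\cup\{k_\infty\}$ to reach a contradiction. The only cosmetic difference is that the paper notes the distance $d(x_n,K)$ is actually attained at some $u_n\in K$ (using compactness of $K$), whereas you simply pick $k_n\in K$ with $d(x_n,k_n)<1/n$ directly from the definition of infimum; your version is slightly more elementary and equally valid.
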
   
\begin{proof}
We proceed by contradiction, we assume that there exists a non-empty compact subset $K\subset X$ s.t. $\forall \varepsilon >0, \forall \gamma \in\R_+ \exists (x^{\varepsilon,\gamma},y^{\varepsilon,\gamma}) \in V(K,\varepsilon) \times Y, \,\\ \|\phi(x^{\varepsilon,\gamma},y^{\varepsilon,\gamma})\|>\gamma$.\\
Therefore taking $\varepsilon =\frac{1}{n}, \, \gamma =n$ with $n\in \N_*$, we obtain     
$ \forall n\in \N_*,  \exists (x_n,y_n) \in V(K,\frac{1}{n}) \times Y, \, \|\phi(x_n,y_n)\| >n$.\\
Hence, for all $n\in \N_*$, $\exists u_n \in K$ s.t. $d(x_n,u_n)=d(x_n,K)<\frac{1}{n}$.\\ 
Since $K$ is a compact, there exists $\psi :\N_* \rightarrow \N_*$  strictly increasing and $z\in K$ s.t. $\lim\limits_{\substack{ n\to +\infty}} u_{\psi(n)}=z$. Hence, we have also $\lim\limits_{\substack{ n\to +\infty}} x_{\psi(n)}=z$.\\
Since $K_0:=\{x_{\psi(n)}:n\in\N_*\}\cup\{z\}$ is compact, there exists $\gamma_0\in\R_+$ s.t., $\forall n\in \N_*$, $\|\phi(x_{\psi(n)},y_{\psi(n)})\| \le \sup_{(x,y)\in K_0\times Y} \|\phi(x,y)\| \le \gamma_0 <+\infty$.\\
Besides, we have also for all $n\in \N_*$, $\psi(n)<\|\phi(x_{\psi(n)},y_{\psi(n)})\| \le \gamma_0$, and since \\$\lim\limits_{\substack{n\to +\infty}} \psi(n)=+\infty$, we obtain the contradiction $+\infty \le \gamma_0 <+\infty$.
\end{proof}
\begin{lemma}\label{lem112}
Let $X$ and $Z$ be two real normed vector spaces, $Y$ be a non-empty set, $\G$ be a non-empty open subset of $X$, and $\phi: \mathbb{G}\times Y \rightarrow Z$ be a mapping. We assume that the two following conditions are fulfilled.
\begin{itemize}
\item[(a)] $\forall(x,y)\in \mathbb{G}\times Y$, $D_{G,1} \phi(x,y)$ exists.
\item[(b)] $\forall K\subset \mathbb{G}$, $K$ non-empty compact set, $\sup_{(x,y)\in K\times Y} \|D_{G,1} \phi(x,y)\| <+\infty.$
\end{itemize}
For each non-empty compact subset $K \subset \mathbb{G}$, there exist $\eta>0$, $\kappa>0$ s.t. for all $x\in K$, for all $x_1,x_2\in \overline{B}(x,\eta)$, for all $y\in Y$, $\|\phi(x_1,y)-\phi(x_2,y)\|\le \kappa \|x_1-x_2\|$.
\end{lemma}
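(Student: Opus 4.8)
The plan is to deduce this local Lipschitz estimate from the mean value inequality applied to the single-variable map $x\mapsto\phi(x,y)$, combined with the uniform bound on the G\^ateaux differentials provided by Lemma \ref{lem11}. First I would use Lemma \ref{lem11}: since hypothesis (b) says $\sup_{(x,y)\in K\times Y}\|D_{G,1}\phi(x,y)\|<+\infty$ for every non-empty compact $K\subset\G$, that lemma (with $X=\G$, the mapping $(x,y)\mapsto D_{G,1}\phi(x,y)$ in place of $\phi$) gives a radius $\rho>0$ and a constant $M<+\infty$ such that $\|D_{G,1}\phi(z,y)\|\le M$ for all $z\in V(K,\rho)$ and all $y\in Y$. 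Because $K$ is compact and $V(K,\rho)$ is open and contains $K$, I can pick $\eta>0$ with $\eta<\rho/2$ (say) so small that for every $x\in K$ the closed ball $\overline B(x,\eta)$ is contained in $V(K,\rho)$; here one uses that $d(z,K)\le\eta<\rho$ for $z\in\overline B(x,\eta)$, $x\in K$. I would also shrink $\eta$ if necessary so that $\overline B(x,\eta)\subset\G$, which is automatic once $\overline B(x,\eta)\subset V(K,\rho)\subset\G$.

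Next, fix $x\in K$, $x_1,x_2\in\overline B(x,\eta)$ and $y\in Y$. Since $\overline B(x,\eta)$ is convex, the segment $[x_1,x_2]:=\{x_2+t(x_1-x_2):t\in[0,1]\}$ lies in $\overline B(x,\eta)\subset V(K,\rho)\subset\G$. Consider the function $\psi:[0,1]\to Z$, $\psi(t):=\phi(x_2+t(x_1-x_2),y)$. By hypothesis (a) the G\^ateaux differential $D_{G,1}\phi$ exists at every point of $\G$, so the directional derivative of $\phi(\cdot,y)$ at $x_2+t(x_1-x_2)$ in the direction $x_1-x_2$ exists and equals $D_{G,1}\phi(x_2+t(x_1-x_2),y)\cdot(x_1-x_2)$; hence $\psi$ is differentiable on $[0,1]$ with $\psi'(t)=D_{G,1}\phi(x_2+t(x_1-x_2),y)\cdot(x_1-x_2)$, and $\|\psi'(t)\|\le M\|x_1-x_2\|$ for all $t\in[0,1]$. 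The mean value inequality for vector-valued functions of a real variable then yields $\|\psi(1)-\psi(0)\|\le\sup_{t\in[0,1]}\|\psi'(t)\|\le M\|x_1-x_2\|$, i.e. $\|\phi(x_1,y)-\phi(x_2,y)\|\le M\|x_1-x_2\|$. Setting $\kappa:=M$ finishes the proof.

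The one point that needs a little care — and which I expect to be the main (though still minor) obstacle — is the justification that $\psi$ is differentiable with the claimed derivative: a priori $D_{G,1}\phi$ is only a G\^ateaux differential, so one must check that the chain-rule computation for the one-dimensional restriction $\psi$ is legitimate. This is fine because differentiating $t\mapsto\phi(c+t\,v,y)$ at $t_0$ only requires the existence of the directional derivative of $\phi(\cdot,y)$ in the single direction $v=x_1-x_2$ at the point $c+t_0v$, which is exactly $D_{G,1}\phi(c+t_0v,y)\cdot v$; no joint or Fr\'echet regularity is needed. One should also note that the mean value inequality in the form $\|\psi(1)-\psi(0)\|\le\sup\|\psi'\|$ holds for maps into an arbitrary normed space $Z$ (via the Hahn--Banach theorem, reducing to the scalar case), so the lemma is valid in the stated generality. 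I would state these two facts as the supporting ingredients and keep the write-up short.
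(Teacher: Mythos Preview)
Your proof is correct and follows essentially the same route as the paper: apply Lemma~\ref{lem11} to the map $(x,y)\mapsto D_{G,1}\phi(x,y)$ to obtain $\rho>0$ with $\kappa:=\sup_{V(K,\rho)\times Y}\|D_{G,1}\phi\|<\infty$, take $\eta=\rho/2$, use convexity of $\overline{B}(x,\eta)$ to keep the segment $[x_1,x_2]$ inside $V(K,\rho)$, and conclude via the mean value inequality. The paper simply cites the mean value inequality for G\^ateaux differentiable maps directly (\cite{ATF}, Subsection~2.2.3), whereas you unpack it through the one-variable auxiliary function $\psi$; your added remarks on why G\^ateaux differentiability suffices for $\psi'$ and on the validity of the mean value inequality in a general normed target $Z$ are accurate and make the argument more self-contained, but the underlying strategy is identical.
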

\begin{proof}
Let $K\subset \mathbb{G}$, $K$ non-empty and compact. From Lemma \ref{lem11}, there exists $\rho >0$ s.t. $\kappa:=\sup_{(x,y)\in V(K,\rho)\times Y} \|D_{G,1}\phi(x,y)\| <+\infty$.\\
We set $\eta:=\frac{\rho}{2}$. Let $x\in K$ and $x_1,x_2\in \overline{B}(x,\eta)$. Since the balls are convex, we have $[x_1,x_2] \subset \overline{B}(x,\eta) \subset V(K,\rho)$. Using the mean value inequality (\cite{ATF}, Subsection 2.2.3, p. 143), we obtain, for all $y\in Y$, $\|\phi(x_1,y)-\phi(x_2,y)\|\le \sup_{\xi\in[x_1,x_2]} \|D_{G,1}\phi(\xi,y)\|\|x_1-x_2\| \le  \kappa \|x_1-x_2\|$.
\end{proof}
\begin{remark}
Note that we don't use a condition of continuity on $\phi$ in Lemma \ref{lem11}, we replace it by a condition of boundedness on the compact subsets. It is similar in Lemma \ref{lem112} of $D_{G,1}\phi$ instead of $\phi$. These lemmas permit us to replace the condition of partial differentiable continuity in (A3) and (A4) of \cite{BY} by the conditions (A{\sc i}2) and (A{\sc v}2).
\end{remark}
\begin{lemma}\label{lem113}
Let $X$ and $Y$ be metric spaces and $\phi\in C^0([0,T]\times X,Y)$. The Nemytskii operator $N_\phi:PC^0([0,T],X) \rightarrow PC^0([0,T],Y)$, defined by\\ $N_\phi(z):=[t\mapsto \phi(t,z(t))]$ when $z\in PC^0([0,T],X)$, is well defined and continuous. Moreover, $N_\phi(NPC_R^0([0,T],X))\subset NPC_R^0([0,T],Y)$.
\end{lemma}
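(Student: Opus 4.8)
The plan is to verify in turn the three assertions: (1) that $N_\phi$ maps $PC^0([0,T],X)$ into $PC^0([0,T],Y)$, (2) that it is continuous, and (3) that it preserves the normalized subspace $NPC_R^0$. For (1), fix $z\in PC^0([0,T],X)$ and let $0=\tau_0<\tau_1<\dots<\tau_{k+1}=T$ be a subdivision adapted to $z$ in the sense of the definition recalled in the excerpt. Away from the $\tau_i$, $t\mapsto\phi(t,z(t))$ is continuous as a composition of the continuous map $t\mapsto(t,z(t))$ with $\phi$. At a node $\tau_i$, I would show the one-sided limits of $N_\phi(z)$ exist in $Y$: as $t\to\tau_i+$ we have $z(t)\to z(\tau_i+)$ in $X$ by definition of $PC^0$, hence $(t,z(t))\to(\tau_i,z(\tau_i+))$ in $[0,T]\times X$, and by continuity of $\phi$, $\phi(t,z(t))\to\phi(\tau_i,z(\tau_i+))$; symmetrically for the left limit using $z(\tau_i-)$. (For $i=0$ only the right limit is relevant, for $i=k+1$ only the left limit.) This shows $N_\phi(z)\in PC^0([0,T],Y)$ with the same subdivision working.

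For (2), continuity of $N_\phi$, recall that the topology on $PC^0([0,T],X)$ is (implicitly, as in \cite{BY}) that of uniform convergence, so it suffices to show: if $z_n\to z$ uniformly on $[0,T]$ then $\phi(\cdot,z_n(\cdot))\to\phi(\cdot,z(\cdot))$ uniformly. The image set $\{(t,z(t)):t\in[0,T]\}$ need not be compact because $z$ may have jumps, but its closure $L$, obtained by adjoining the finitely many points $(\tau_i,z(\tau_i\pm))$, is compact (a continuous-with-jumps image of $[0,T]$, which one checks is compact — this is the only place where the piecewise structure really enters). For $n$ large, all the values $(t,z_n(t))$ lie in a fixed compact neighbourhood of $L$; on such a neighbourhood $\phi$ is uniformly continuous, and then $\sup_t d_Y(\phi(t,z_n(t)),\phi(t,z(t)))\to 0$ follows from $\sup_t d_X(z_n(t),z(t))\to 0$ by uniform continuity. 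I expect this compactness-of-the-image-with-jumps point to be the main (though still routine) obstacle: one must be a little careful that the relevant set to feed into uniform continuity of $\phi$ is compact despite $z$ not being continuous.

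For (3), suppose $z\in NPC_R^0([0,T],X)$, i.e. $z$ is right-continuous on $[0,T[$ and left-continuous at $T$. Then for $t\in[0,T[$, right-continuity of $z$ at $t$ together with continuity of $\phi$ gives $\phi(s,z(s))\to\phi(t,z(t))$ as $s\to t+$, so $N_\phi(z)$ is right-continuous at $t$; at $T$, left-continuity of $z$ and continuity of $\phi$ give left-continuity of $N_\phi(z)$. Hence $N_\phi(z)\in NPC_R^0([0,T],Y)$, which is the last claim. Combining the three parts completes the proof.
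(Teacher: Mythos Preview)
Your argument for (1) and (3) matches the paper's almost verbatim: the paper also computes $w(t\pm)=\phi(t,z(t\pm))$ from continuity of $\phi$, and handles the normalized case by the same right-continuity observation.

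For (2), your overall strategy---compactness of the closure of the graph of $z$, then uniform continuity of $\phi$ there---is exactly what the paper does. However, your phrasing ``for $n$ large, all the values $(t,z_n(t))$ lie in a fixed compact \emph{neighbourhood} of $L$; on such a neighbourhood $\phi$ is uniformly continuous'' does not go through as stated: $X$ is only assumed to be a metric space, so a compact set $L\subset[0,T]\times X$ need not admit any compact neighbourhood (think of $X$ an infinite-dimensional Banach space, where no closed ball is compact). Thus you cannot appeal to Heine--Cantor on a compact neighbourhood.

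The paper avoids this by invoking what it calls the Heine--Schwartz lemma (\cite{Sc}, p.~355): if $\phi$ is continuous on $[0,T]\times X$ and $K\subset[0,T]\times X$ is compact, then for every $\varepsilon>0$ there is $\delta>0$ such that for all $(t,z(t))\in K$ and all $(s,\xi)\in[0,T]\times X$ with $|t-s|+d(z(t),\xi)\le\delta$ one has $d(\phi(t,z(t)),\phi(s,\xi))\le\varepsilon$. This is uniform continuity \emph{at} the compact set rather than \emph{on} a compact neighbourhood, and it is exactly what you need: take $K=cl(\mathcal{G}(z))$, then $\sup_t d(z(t),z_1(t))\le\delta$ gives $\sup_t d(\phi(t,z(t)),\phi(t,z_1(t)))\le\varepsilon$ directly. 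The fix to your proof is therefore one line---replace the compact-neighbourhood step by this lemma (whose proof is the standard subsequence/contradiction argument you would use anyway).
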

\begin{proof}
Let $z\in PC^0([0,T],X)$; we set $w(t):=\phi(t,z(t))$ when $t\in[0,T]$. Since $\phi$ is continuous, we have, for all $t\in[0,T[$, $w(t+)=\phi(t,z(t+))$ and, for all $t\in \, ]0,T]$, $w(t-)=\phi(t,z(t-))$. Since the set of discontinuity points of $z$, discont($z$), is finite, discont($w$) is necessarily finite, and so $w\in PC^0([0,T],Y)$.\\
We denote by $\mathcal{G}(z)$ the graphic of $z$. Since $z\in PC^0([0,T],X)$, $cl({\mathcal G}(z))$ is compact and then, we can use the Heine-Schwartz lemma (\cite{Sc} p. 355, note (**)) and assert that :
$\forall \varepsilon >0,\, \exists \mathfrak{d}_\varepsilon>0, \, \forall (t,s)\in [0,T],\, \forall \xi\in X,\, |t-s|+d(z(t),\xi) \le \mathfrak{d}_\varepsilon \Rightarrow d(\phi(t,z(t)),\phi(s,\xi)) \le \varepsilon.$ Hence, we have:\\ $\forall \varepsilon >0,\, \exists \mathfrak{d}_\varepsilon>0, \, \forall t\in [0,T],\, \forall \xi\in X,\, d(z(t),\xi) \le \mathfrak{d}_\varepsilon \Rightarrow d(\phi(t,z(t)),\phi(t,\xi)) \le \varepsilon. $\\
Let $\varepsilon >0$; if $z_1\in PC^0([0,T],X)$ satisfies $\sup_{0\le t \le T} d(z(t),z_1(t))\le \mathfrak{d}_\varepsilon$, then $\sup_{0\le t\le T}d(\phi(t,z(t)),\phi(t,z_1(t)))\le \varepsilon$.\\
The continuity of $N_\phi$ at $z$ is proven. Therefore, $N_\phi$ is well defined and continuous.
Moreover, when $z\in NPC_R^0([0,T],X)$, since $z$ is right-continous on $[0,T[$ and $\phi$ is continuous, we have also $N_\phi(z) \in NPC_R^0([0,T],Y)$.  
\end{proof}
\begin{lemma}\label{lem114} Let $0=s_0<s_1<...<s_{\mathfrak n}<s_{{\mathfrak n}+1}=T$, and, for all $i\in\{0,...,{\mathfrak n}\}$, ${\bf h}_i\in PC^0([0,T],E)$. We consider ${\bf h} :[0,T] \rightarrow E$ defined by ${\bf h}(t)=\sum_{0\le i\le {\mathfrak n}-1} 1_{[s_i,s_{i+1}[}(t){\bf h}_i(t)+1_{[s_{\mathfrak n},T]}(t){\bf h}_\mathfrak{n}(t)$ when $t\in [0,T]$.\\
We have ${\bf h}\in PC^0([0,T],E)$.
\end{lemma}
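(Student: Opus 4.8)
The plan is to produce a single finite subdivision of $[0,T]$ adapted simultaneously to all the pieces ${\bf h}_i$ and to the junction points $s_i$, and then to verify the two defining requirements of membership in $PC^0([0,T],E)$ — continuity off the subdivision and existence of the one‑sided limits at its points — by localizing near each point so that ${\bf h}$ coincides there with a single ${\bf h}_i$.

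Concretely, I would set $D_i:=\mathrm{discont}({\bf h}_i)$ for $i\in\{0,\dots,{\mathfrak n}\}$; since each ${\bf h}_i\in PC^0([0,T],E)$, each $D_i$ is finite, hence $\Theta:=\{s_0,\dots,s_{{\mathfrak n}+1}\}\cup\bigcup_{i=0}^{{\mathfrak n}}D_i$ is a finite subset of $[0,T]$ containing $0$ and $T$, so it can be listed as $0=\tau_0<\tau_1<\dots<\tau_k<\tau_{k+1}=T$. The key elementary observation is that each open interval $]\tau_j,\tau_{j+1}[$ is connected and disjoint from $\{s_0,\dots,s_{{\mathfrak n}+1}\}$, hence contained in one of the gaps $]s_i,s_{i+1}[$; denoting this index $i=i(j)$, we get $]\tau_j,\tau_{j+1}[\,\subset[s_i,s_{i+1}[$ (resp. $\subset[s_{\mathfrak n},T]$ if $i={\mathfrak n}$), on which ${\bf h}$ coincides with ${\bf h}_i$ by the very definition of ${\bf h}$, and moreover $\tau_j\in[s_i,s_{i+1}[\subset[0,T[$ and $\tau_{j+1}\in\,]s_i,s_{i+1}]\subset\,]0,T]$ (with the obvious modification when $i={\mathfrak n}$).

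With this in hand the rest is routine. If $t\notin\{\tau_j:0\le j\le k+1\}$, then $t\in\,]\tau_j,\tau_{j+1}[$ for some $j$, on which ${\bf h}={\bf h}_{i(j)}$; as $D_{i(j)}\subset\Theta$ contains none of the points of $]\tau_j,\tau_{j+1}[$, the map ${\bf h}_{i(j)}$, hence ${\bf h}$, is continuous at $t$. For $\tau_j$ with $j\le k$ (so $\tau_j<T$): on $]\tau_j,\tau_{j+1}[$ one has ${\bf h}={\bf h}_{i(j)}$ and $\tau_j\in[0,T[$, so ${\bf h}(\tau_j+)={\bf h}_{i(j)}(\tau_j+)$ exists in $E$. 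For $\tau_j$ with $j\ge1$ (so $\tau_j>0$): on $]\tau_{j-1},\tau_j[$ one has ${\bf h}={\bf h}_{i(j-1)}$ and $\tau_j\in\,]0,T]$, so ${\bf h}(\tau_j-)={\bf h}_{i(j-1)}(\tau_j-)$ exists in $E$. (If it happens that $\Theta=\{0,T\}$, the same computation applies with the single interval $]0,T[$, and in fact ${\bf h}={\bf h}_0$ is then continuous.) This verifies all the conditions, so ${\bf h}\in PC^0([0,T],E)$.

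There is no genuine obstacle here; the only point that requires a little care is the bookkeeping at the junction points $s_1,\dots,s_{\mathfrak n}$, where the left and right one‑sided limits of ${\bf h}$ are inherited from two different pieces — the left limit from ${\bf h}_{i-1}$ and the right limit from ${\bf h}_i$ — which is precisely why the hypothesis "${\bf h}_i\in PC^0([0,T],E)$" (carrying \emph{both} one‑sided limits of each piece) is used.
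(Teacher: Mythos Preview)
Your proof is correct and follows essentially the same approach as the paper: both arguments hinge on the finite set $\{s_0,\dots,s_{{\mathfrak n}+1}\}\cup\bigcup_i\mathrm{discont}({\bf h}_i)$ and then check the one-sided limits. The only cosmetic difference is that the paper computes ${\bf h}(t\pm)$ directly from the sum formula (taking one-sided limits of the indicator factors and of the ${\bf h}_i$ simultaneously), whereas you localize so that ${\bf h}$ coincides with a single ${\bf h}_{i(j)}$ on each one-sided neighborhood; your version is a bit more explicit but amounts to the same thing.
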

\begin{proof}
Note that discont(${\bf h}$)$\subset \{s_i : 0\le i\le {\mathfrak n}+1\} \cup \bigcup_{0\le i\le {\mathfrak n}} {\rm discont}({\bf h}_i)$, hence discont(${\bf h}$) is finite. When $t\in [0,T[$, we have ${\bf h}(t+)=\sum_{0\le i\le {\mathfrak n}-1} 1_{[s_i,s_{i+1}[}(t+){\bf h}_i(t+)\\
+1_{[s_{\mathfrak n},T]}(t+){\bf h}_{\mathfrak n}(t+)$ and, when $t\in \, ]0,T]$, ${\bf h}(t-)=\sum_{0\le i\le {\mathfrak n}-1} 1_{[s_i,s_{i+1}[}(t-){\bf h}_i(t-)+1_{[s_{\mathfrak n},T]}(t-){\bf h}_{\mathfrak n}(t-)$. We have proven that ${\bf h}\in PC^0([0,T],E)$.
\end{proof}        
Now, we consider the linearization of the evolution equation
\begin{equation}\label{edoh}
{\underline d}y(t)=D_{F,2}f(t,x_0(t),u_0(t))\cdot y(t).
\end{equation}
We denote by $R(\cdot,\cdot)$ the resolvent of (\ref{edoh}). We know that, for all $(t,s)\in[0,T]^2$, $R(t,\cdot)\in PC^1([0,T],\mathcal{L}(E,E))$ and $R(\cdot,s)\in PC^1([0,T],\mathcal{L}(E,E))$.\\
For all $a\in \overline{B}_{\|\cdot\|_1}(0,\delta(S)) \cap \R^N_+$, we also consider the following Cauchy problem on an inhomogeneous ODE:
\begin{equation}\label{edoz}
\left.
\begin{array}{l}
{\underline  d}z_a(t)=D_{F,2}f(t,x_0(t),u_0(t))\cdot z_a(t)+f(t,x_0(t),u_a(t))-f(t,x_0(t),u_0(t))\\
z(0)=0.
\end{array}
\right\}
\end{equation}
We denote by $z_a$ the non extendable solution of (\ref{edoz}).
Since (\ref{edoz}) is an inhomogeneous linear ODE, $z_a$ is defined on all over $[0,T]$.
\begin{lemma}\label{zaDif}
We consider the linear mapping $\mathfrak{L}: \R^N\rightarrow E$, defined by \\
$\mathfrak{L}\cdot a:= \sum_{i=1}^N a_iR(T,t_i)\cdot [f(t_i,x_0(t_i),v_i)-f(t_i,x_0(t_i),u_0(t_i))]$ when \\
$a=(a_1,...,a_N)\in \R^N$. There exists $\varrho_1: \overline{B}_{\|\cdot\|_1}(0,\delta(S)) \cap \R^N_+\rightarrow E$ s.t. $\lim\limits_{\substack{a \to 0}} \varrho_1(a)=0$ and, for all $a\in \overline{B}_{\|\cdot\|_1}(0,\delta(S)) \cap \R^N_+$, $z_a(T)=z_0(T)+\mathfrak{L}\cdot a +\|a\|_1\varrho_1(a)$. 
\end{lemma}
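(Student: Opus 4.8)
The plan is to analyze the inhomogeneous ODE (\ref{edoz}) via the variation-of-constants (Duhamel) formula and to isolate the contribution of the needlelike perturbation. First I would write, using the resolvent $R(\cdot,\cdot)$ of (\ref{edoh}),
\[
z_a(T) = \int_0^T R(T,r)\bigl[f(r,x_0(r),u_a(r)) - f(r,x_0(r),u_0(r))\bigr]\,dr,
\]
and similarly $z_0(T) = \int_0^T R(T,r)[f(r,x_0(r),u_0(r)) - f(r,x_0(r),u_0(r))]\,dr = 0$; in fact $z_0 \equiv 0$ since the inhomogeneous term vanishes when $a=0$ (as $u_0 = u_0$ on all of $[0,T]$), so the claimed formula reduces to $z_a(T) = \mathfrak{L}\cdot a + \|a\|_1\varrho_1(a)$. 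The integrand is supported on $\bigcup_i I_i(a)$ where $u_a = v_i$, and on $I_i(a)$ the bracket equals $f(r,x_0(r),v_i) - f(r,x_0(r),u_0(r))$. Hence
\[
z_a(T) = \sum_{i=1}^N \int_{I_i(a)} R(T,r)\bigl[f(r,x_0(r),v_i) - f(r,x_0(r),u_0(r))\bigr]\,dr,
\]
and $I_i(a)$ is an interval of length $a_i$ whose left endpoint $t_i + b_i(a)$ tends to $t_i$ as $a\to 0$ (note $|b_i(a)| \le \|a\|_1$).

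Next I would approximate each integral. Define $\varphi_i(r) := R(T,r)[f(r,x_0(r),v_i) - f(r,x_0(r),u_0(r))]$ for $r$ near $t_i$. Since $R(T,\cdot) \in PC^1$ and $f(\cdot,x_0(\cdot),\cdot)$ composed with the piecewise continuous data is piecewise continuous (by Lemma \ref{lem113} and Lemma \ref{lem114}), $\varphi_i$ is at least right-continuous at $t_i$ with $\varphi_i(t_i+) = R(T,t_i)[f(t_i,x_0(t_i),v_i) - f(t_i,x_0(t_i),u_0(t_i))]$ (here one must be a little careful because several $t_j$ may coincide with $t_i$; the shift by $b_i(a)$ is precisely designed so that $I_i(a)$ lands just to the right of $t_i$, beyond the other intervals, where right-continuity applies). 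Then
\[
\int_{I_i(a)} \varphi_i(r)\,dr = a_i\,\varphi_i(t_i+) + a_i\,\sigma_i(a),
\]
where $\sigma_i(a) := \frac{1}{a_i}\int_{I_i(a)}(\varphi_i(r) - \varphi_i(t_i+))\,dr \to 0$ as $a \to 0$ (when $a_i = 0$ the term is absent, interpret as $0$). Summing over $i$ gives
\[
z_a(T) = \sum_{i=1}^N a_i\,\varphi_i(t_i+) + \sum_{i=1}^N a_i\,\sigma_i(a) = \mathfrak{L}\cdot a + \sum_{i=1}^N a_i\,\sigma_i(a).
\]
Finally, since $|a_i| \le \|a\|_1$, the remainder is bounded in norm by $\|a\|_1 \cdot \sum_i \|\sigma_i(a)\|$, so setting $\varrho_1(a) := \frac{1}{\|a\|_1}\sum_i a_i\sigma_i(a)$ for $a \ne 0$ and $\varrho_1(0) := 0$ yields $z_a(T) = \mathfrak{L}\cdot a + \|a\|_1\varrho_1(a)$ with $\|\varrho_1(a)\| \le \sum_i\|\sigma_i(a)\| \to 0$.

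The main obstacle I anticipate is not the Duhamel representation itself but the careful treatment of coinciding times $t_j = t_i$: one must verify that the shifted interval $I_i(a)$ does stay inside a one-sided neighborhood of $t_i$ on which $r \mapsto f(r,x_0(r),v_i)$ is continuous, so that $\varphi_i(t_i+)$ is the correct limit and $\sigma_i(a) \to 0$. This is exactly what the disjointness $I_i(a)\cap I_j(a) = \emptyset$ and the bound $\|a\|_1 \le \delta(S)$ guarantee, but writing it cleanly requires tracking the definition of $b_i(a)$. A secondary point is justifying the variation-of-constants formula in the $PC^1$ setting (extended derivative $\underline{d}$, Riemann integral of a normalized piecewise continuous function), which follows from the properties of $R(\cdot,\cdot)$ and the integral identity $x(t) - x(s) = \int_s^t \underline{d}x(r)\,dr$ recalled in the preliminaries.
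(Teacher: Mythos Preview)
Your proposal is correct and follows essentially the same route as the paper: variation of constants gives $z_a(T)=\sum_i\int_{I_i(a)}R(T,r)[f(r,x_0(r),v_i)-f(r,x_0(r),u_0(r))]\,dr$, each summand is approximated by $a_i$ times its value at $t_i$, and the remainders $\sigma_i(a)$ (the paper's $\phi_i(a)$) are shown to vanish as $a\to 0$. The only cosmetic difference is that the paper passes to the change of variable $r=t_i+b_i(a)+\theta a_i$, $\theta\in[0,1]$, and invokes dominated convergence to obtain $\phi_i(a)\to 0$, whereas your sketch appeals directly to right-continuity of $\varphi_i$ at $t_i$ (which suffices since $u_0$ has only finitely many discontinuities, so $\varphi_i$ is continuous on a right-neighborhood of $t_i$); both arguments are valid.
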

\begin{proof}
For all $a\in\overline{B}_{\|\cdot\|_1}(0,\delta(S)) \cap \R^N_+$, the second member of (\ref{edoz}) is $\Delta_a$, defined by $\Delta_a(t):=0$ when $t\notin \bigcup_{1\le i\le N} I_i(a)$ and $\Delta_a(t):=f(t,x_0(t),v_i)-f(t,x_0(t),u_0(t))$ when $t\in I_i(a)$ ($1\le i \le N$). Using Lemma \ref{lem113} and \ref{lem114}, we see that $\Delta_a\in PC^0([0,T],E)$. Since $R(t,\cdot)$ is continuous, $R(t,\cdot)\cdot \Delta_a \in PC^0([0,T],E)$, hence it is Riemann integrable. \\
Using the formula of the Variation of Constants, we can write $z_a(t)=z_a(0)+ \int_{0}^{t} R(t,s)\cdot \Delta_a(s)ds$. Note that $z_0=0$ since the second member of (\ref{edoz}) is zero and the initial value is zero.\\
Hence, we have for all $a\in \overline{B}_{\|\cdot\|_1}(0,\delta(S)) \cap \R^N_+$,
$
z_a(T)-z_0(T)-\mathfrak{L}\cdot a\\
=\sum_{i=1}^{N} \int_{I_{i}(a)} R(T,s)\cdot \Delta_a(s) ds-\sum_{i=1}^{N} a_i R(T,t_i)\cdot \Delta_a(t_i)\\ 
=\sum_{i=1}^{N} \int_{I_{i}(a)} (R(T,s)\cdot \Delta_a(s) - R(T,t_i)\cdot \Delta_a(t_i)) ds.\\ 
$
We set, for all $i\in \{1,...,N\}$, $\phi_i(a):=0$ if $a_i=0$ and 
\begin{equation}
\phi_i(a):= \frac{1}{a_i}\int_{t_i+b_i}^{t_i+b_i+a_i}[R(T,s)\cdot \Delta(s,a)-R(T,t_i)\cdot \Delta(t_i)]ds.
\end{equation}
if $a_i\neq 0$. Hence, we obtain the following relation 
\begin{equation}
z_a(T)=z_0(T)+\mathfrak{L}\cdot a +\sum_{1\le i\le N} a_i \phi_i(a).
\end{equation} 
We introduce the mappings, $\psi_i:[0,1]\times (\overline{B}_{\|\cdot\|_1}(0,\delta(S)) \cap \R^N_+) \rightarrow E$, ($1\le i\le N$), defined by 
\begin{equation}
\psi_i(\theta,a):= R(T,t_i+b_i+\theta a_i)\cdot \Delta_a(t_i+b_i+\theta a_i,a)-R(T,t_i)\cdot \Delta_a(t_i).  
\end{equation}
Note that $\psi_i(\cdot,a)\in PC^0([0,1],E)$ and so it is Riemann integrable and using a change of variable, we obtain $\phi_i(a):=\int_{0}^{1}\psi_i(\theta,a)d\theta$ ($1\le i\le N$).\\
Since $x_0$ and $R(T,\cdot)$ are continuous, since $u_0$ is right-continuous, we have $\lim\limits_{\substack{a\to 0}}x_0(t_i+b_i+\theta a_i)=x_0(t_i)$, $\lim\limits_{\substack{a\to 0}}u_0(t_i+b_i+\theta a_i)=u_0(t_i)$, $\lim\limits_{\substack{a\to 0}} R(T,t_i+b_i+\theta a_i)=R(T,t_i)$, and then we obtain $\lim\limits_{\substack{a\to 0}} \psi_i(\theta,a)=0$ for all $\theta \in [0,1]$ and also 
\begin{equation}\label{210}
\lim\limits_{\substack{a\to 0}} \|\psi_i(\theta,a)\|=0 \text{ for all }\theta \in[0,1].
\end{equation}
Since $\Delta_a\in PC^0([0,T],E)$ and since $a$ is not present in the formula of $\Delta_a$, we see that:
\begin{equation}\label{2101}
\exists c_1\in \R_+,\, \forall a \in \overline{B}_{\|\cdot\|_1}(0,\delta(S)) \cap \R^N_+,\, \forall t\in[0,T],\, \|\Delta_a(t)\|\le c_1.
\end{equation}
Consequently, we obtain, for all $\theta\in [0,1]$ and for all $a\in \overline{B}_{\|\cdot\|_1}(0,\delta(S)) \cap \R^N_+$,
\begin{equation}\label{211}
\|\psi_i(\theta,a)\| \le 2\sup_{0\le s\le T} \|R(T,s)\|\|\Delta_a(s)\| \le 2\|R(T,\cdot)\|_\infty c_1.
\end{equation}
Since $\|\psi_i(\cdot,a)\|\in PC^0([0,T],\R)$, it is Riemann integrable and also Borel integrable on $[0,1]$, and since the constants are Riemann integrable on $[0,1]$, with (\ref{210}) and (\ref{211}) we can use the theorem of the Dominated Convergence of Lebesgue to obtain $ \lim\limits_{\substack{a\to 0}} \int_{0}^{1} \|\psi_i(\theta,a)\|d\theta=\int_{0}^{1} \lim\limits_{\substack{a\to 0}} \|\psi_i(\theta,a)\|d\theta=0$.
Since, for all $a\in \overline{B}_{\|\cdot\|_1}(0,\delta(S)) \cap \R^N_+$, $\|\phi_i(a)\| \le \int_{0}^{1} \|\psi_i(\theta,a)\|d\theta$, we obtain $\lim\limits_{\substack{a\to 0}} \|\phi_i(a)\|=0$, i.e. $\lim\limits_{\substack{a\to 0}} \phi_i(a)=0$.\\
Setting, for all $a\in \R^N_+\cap \overline{B}_{\|\cdot\|_1}(0,\delta(S))$, $\varrho_1(a):=0$ when $a=0$ and $\varrho_1(a):=\frac{1}{\|a\|_1} \sum_{i=1}^N a_i \phi_i(a)$ when $a\neq 0$, we see that $\lim\limits_{\substack{a\to 0}} \varrho_1(a)=0$, and we have proven the lemma.
\end{proof}
\begin{lemma}\label{lem13} There exists $k \in \R_{+*}$, for all $a \in \overline{B}_{\|\cdot\|_1}(0,\delta(S)) \cap \R^N_+$, we have $\int_0^T \| f(t, x_0(t), u_a(t)) - f(t, x_0(t), u_0(t)) \| dt \leq k \| a \|_1$.
\end{lemma}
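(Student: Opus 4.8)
The plan is to reduce the integral over $[0,T]$ to a sum of $N$ integrals over the intervals $I_i(a)$, on each of which $u_a$ is constant equal to $v_i$. Indeed, outside $\bigcup_{1\le i\le N} I_i(a)$ we have $u_a(t)=u_0(t)$, so the integrand vanishes there; hence
$$
\int_0^T \| f(t, x_0(t), u_a(t)) - f(t, x_0(t), u_0(t)) \| dt = \sum_{i=1}^N \int_{I_i(a)} \| f(t, x_0(t), v_i) - f(t, x_0(t), u_0(t)) \| dt.
$$
Since the length of $I_i(a)$ is exactly $a_i$, it suffices to bound the integrand uniformly on $[0,T]$ by a constant independent of $a$ (and independent of which $I_i$ we are in), and then $k$ can be taken to be that constant, because $\sum_i a_i = \|a\|_1$.

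First I would produce the uniform bound. The curve $t\mapsto x_0(t)$ has relatively compact image in $\Omega$ (it is piecewise continuous with one-sided limits, so its graph closure is compact, hence so is the projection onto $\Omega$); call a compact set containing it $K$. The points $u_0(t_1),\dots,u_0(t_N)$ together with the image of $u_0$ form — by the same argument applied to $u_0\in NPC^0_R([0,T],U)$ — a relatively compact subset of $U$; adjoining the finitely many points $v_1,\dots,v_N$ keeps it relatively compact. Let $M$ be a compact subset of $U$ containing $\{u_a(t): t\in[0,T],\ a\in\overline{B}_{\|\cdot\|_1}(0,\delta(S))\cap\R^N_+\}$ together with the image of $u_0$; note $M$ does not depend on $a$, only on $S$ and $u_0$. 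Then, since $f$ is continuous on $[0,T]\times\Omega\times U$ by (A{\sc v}1) and $[0,T]\times K\times M$ is compact, $f$ is bounded on it by some $c\in\R_+$; consequently $\|f(t,x_0(t),v_i)-f(t,x_0(t),u_0(t))\|\le 2c$ for every $t$ and every $i$. Setting $k:=2c$ (or $\max(2c,1)$ to ensure $k\in\R_{+*}$), the displayed sum is bounded by $\sum_{i=1}^N 2c\, a_i = 2c\|a\|_1$, which is the claim.

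I expect the only genuinely delicate point to be the verification that the relevant images are relatively compact — in particular that one may enclose the images of $x_0$ and of all the needlelike variations $u_a$ in fixed compact sets $K\subset\Omega$ and $M\subset U$, uniformly in $a$. This follows because $x_0\in PC^1([0,T],\Omega)$ has a graph with compact closure and $u_a$ takes only the values $u_0(t)$ and the finitely many $v_i$, so that the closure of $\{u_a(t,S):t\in[0,T],\,a\in\overline{B}_{\|\cdot\|_1}(0,\delta(S))\cap\R^N_+\}\cup \overline{\{u_0(t):t\in[0,T]\}}$ is compact. Everything else is the elementary measure-theoretic bookkeeping above: the integrand is supported on $\bigcup_i I_i(a)$, the $I_i(a)$ are pairwise disjoint of total length $\|a\|_1$, and the integrand is bounded there by $2c$. (Alternatively, one can avoid the explicit compact sets by invoking Lemma \ref{lem113}: the map $t\mapsto f(t,x_0(t),u_0(t))$ lies in $PC^0([0,T],E)$ hence is bounded, and for each fixed $i$ the map $t\mapsto f(t,x_0(t),v_i)$ likewise lies in $PC^0([0,T],E)$ hence is bounded, giving a bound depending a priori on $i$; taking the maximum over the finitely many $i$ yields the uniform constant $c$.)
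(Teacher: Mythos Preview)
Your proposal is correct and follows essentially the same approach as the paper: decompose the integral over $[0,T]$ into the sum over the disjoint intervals $I_i(a)$ (outside of which the integrand vanishes), bound the integrand uniformly by a constant independent of $a$, and conclude using $\sum_i a_i=\|a\|_1$. The only cosmetic difference is that the paper does not rederive the uniform bound here; it simply sets $k:=c_1$, where $c_1$ is the bound on $\|\Delta_a(t)\|$ already obtained in the proof of Lemma~\ref{zaDif} (equation~(\ref{2101})), whose justification is exactly the ``alternative'' route you mention via Lemma~\ref{lem113} (each $t\mapsto f(t,x_0(t),v_i)$ and $t\mapsto f(t,x_0(t),u_0(t))$ lies in $PC^0([0,T],E)$, hence is bounded).
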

\begin{proof}
We set $k:=c_1$ provided by ($\ref{2101})$. Let $a\in \R^N_+ \cap \overline{B}_{\|\cdot\|_1}(0,\delta(S))$; using the Chasles relation we have
\[
\begin{array}{l}
\int_{0}^{T} \|f(t,x_0(t),u_a(t))-f(t,x_0(t),u_0(t))\|dt=\int_{0}^{T} \|\Delta_a(t)\|dt \\
= \sum_{i=1}^{N}  \int_{t_i+b_i}^{t_i+b_i+a_i} \|\Delta_a(t)\|dt \le c_1\sum_{i=1}^{N}  a_i  = k\|a\|_1.
\end{array}
\]
\end{proof}
The discontinuity points of $u_0$ are denoted by $\tau_j$ ($0\le j\le k+1$). We consider the set $M := (\bigcup_{0 \leq i \leq k} cl(u_0([ \tau_i, \tau_{i+1}])) \cup \{ v_i : 1 \leq i \leq N \}$ which is compact as a finite union of compacts.
\begin{lemma}\label{lem12}
There exist $L \in \R_{+*}$ and $r \in \R_{+*}$  such that, $\forall t \in [0,T]$,
$\forall \xi, \xi_1 \in \overline{B}(x_0(t), r)$, $\forall \zeta \in M$, we have $\| f(t, \xi, \zeta) - f(t, \xi_1, \zeta) \| \leq L \| \xi - \xi_1 \|.$
\end{lemma}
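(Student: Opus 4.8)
The plan is to derive the local Lipschitz estimate from the hypotheses (A\textsc{v}1), (A\textsc{v}2) via the abstract Lipschitz lemma (Lemma \ref{lem112}), combined with a compactness argument to make the Lipschitz constant and the radius uniform in $t$. The key point is that Lemma \ref{lem112} is about a map $\phi:\G\times Y\to Z$ on a fixed open subset of a normed space with a parameter set $Y$, whereas here the "bad" variable $t$ also varies; so the first task is to recast the situation in the form required by that lemma. Concretely, I would fix attention on a neighbourhood in $\Omega$ of the compact set $x_0([0,T])$ (the graph of $x_0$ being compact since $x_0\in PC^1\subset PC^0$), choose $r_0>0$ with $\overline{B}(x_0(t),r_0)\subset\Omega$ for all $t$ (possible because $\Omega$ is open and $x_0([0,T])$ is compact, or rather because the image is compact and one can cover it by finitely many balls contained in $\Omega$), and then work inside the fixed open set $\mathfrak O:=V(x_0([0,T]),r_0)\subset\Omega$.

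Next I would apply Lemma \ref{lem112} with $X=E$, $Z=E$, $\G=\mathfrak O$, $Y=M$ (the compact set of controls defined just before the statement), and $\phi((t,\xi),\zeta)$ — but this does not quite fit because the map also depends on $t$. The cleanest route is: set $K:=x_0([0,T])$, which is a non-empty compact subset of $\mathfrak O$; by (A\textsc{v}2) applied with the compact $M\subset U$ and any compact $\subset\Omega$, together with Lemma \ref{lem11}, $\sup\{\|D_{G,2}f(t,\xi,\zeta)\|:t\in[0,T],\ \xi\in V(K,\rho),\ \zeta\in M\}<+\infty$ for some $\rho>0$. Then for each fixed $t$, the map $\xi\mapsto f(t,\xi,\zeta)$ has $G$-derivative bounded uniformly (in $t$ and $\zeta$) on $V(K,\rho)$, so the mean value inequality (\cite{ATF}, Subsection 2.2.3, p.~143) applied on the segment $[\xi,\xi_1]\subset\overline{B}(x_0(t),\rho/2)\subset V(K,\rho)$ — which is convex and contained in $V(K,\rho)$ exactly as in the proof of Lemma \ref{lem112} — yields $\|f(t,\xi,\zeta)-f(t,\xi_1,\zeta)\|\le L\|\xi-\xi_1\|$ with $L:=\sup\|D_{G,2}f\|$ over $V(K,\rho)\times M$ and $r:=\rho/2$. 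This $L$ and $r$ are independent of $t$, $\xi$, $\xi_1$, $\zeta$, which is exactly the claim.

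In more detail the steps are: (1) note $x_0([0,T])$ is compact; since $\Omega$ is an arbitrary subset (not assumed open), I should instead directly invoke (A\textsc{v}2) with $K$ replaced by $x_0([0,T])$ and use Lemma \ref{lem11} to get a uniform bound on $D_{G,2}f$ over a "tube" $V(x_0([0,T]),\rho)$ intersected with $\Omega$ — but Lemma \ref{lem11} is stated for $X$ a metric space with no openness requirement, so $V(K,\rho)$ there is the tube in $E$, and the bound holds on $(V(K,\rho)\cap\Omega)\times M$; (2) shrink $\rho$ if necessary, or rather take $r=\rho/2$ and note that for $t\in[0,T]$ the ball $\overline B(x_0(t),r)$ need not lie in $\Omega$, so the statement should be read as quantifying only over those $\xi,\xi_1$ that are also in $\Omega$ — consistent with $f$ being defined on $[0,T]\times\Omega\times U$; (3) apply the mean value inequality segment by segment as above. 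The main obstacle, and the place to be careful, is the interplay between the extra variable $t$ and the hypotheses of Lemma \ref{lem112}: one must check that the uniform bound on $D_{G,2}f$ furnished by (A\textsc{v}2) and Lemma \ref{lem11} is genuinely uniform in $t$ over all of $[0,T]$, and that the convexity-of-balls argument used to run the mean value inequality goes through with a single radius $r$ valid for every $t$; both hold because $[0,T]$ itself is compact and $x_0$ is continuous, so the whole "tube" argument can be carried out once and for all.
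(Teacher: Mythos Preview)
Your argument is correct, but you have taken a detour that the paper avoids. You write that applying Lemma~\ref{lem112} ``does not quite fit because the map also depends on $t$'', and then you essentially reprove Lemma~\ref{lem112} by hand: you invoke Lemma~\ref{lem11} to get a uniform bound on $\|D_{G,2}f\|$ over a tube $V(K,\rho)\times[0,T]\times M$, and then run the mean value inequality on each segment. This works, but it duplicates effort.

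The paper's proof is a single line: set $K:=x_0([0,T])$ and apply Lemma~\ref{lem112} to $\phi(\xi,(t,\zeta)):=f(t,\xi,\zeta)$ with parameter set $Y:=[0,T]\times M$. The point you overlooked is that in Lemma~\ref{lem112} the set $Y$ is merely a \emph{non-empty set} with no topological or linear structure; nothing prevents you from folding the time variable into it. Hypothesis~(a) of Lemma~\ref{lem112} then becomes exactly the existence of $D_{G,2}f$ asserted in (A\textsc{v}1), and hypothesis~(b) becomes exactly (A\textsc{v}2). The conclusion of Lemma~\ref{lem112}, specialised to $x=x_0(t)\in K$, is precisely the statement of Lemma~\ref{lem12} with $L=\kappa$ and $r=\eta$. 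Your discussion of whether $\Omega$ is open, of tubes intersected with $\Omega$, and of uniformity in $t$ is thus unnecessary: all of it is already absorbed into the abstract lemma once $t$ is treated as part of the parameter.
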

\begin{proof}
We set $K:=x_0([0,T])$ which is compact and non-empty. Using (A{\sc v}1) and (A{\sc v}2) we can apply Lemma \ref{lem112} to the mapping $\phi(\xi,(t,\zeta)):=f(t,\xi,\zeta)$, with $Y:=[0,T]\times M$, to obtain the result.
\end{proof}
Setting $r_1 := r e^{-L \cdot T}$, we consider the set $\mathcal{X} := \overline{B}(x_0, r_1)\subset C^0([0,T],\Omega) \subset C^0([0,T],E)$. This last vector space is endowed with the norm of Bielecki $\| \varphi \|_b := \sup_{t \in [0,T]} (e^{- L t} \| \varphi (t) \|)$ for which it is a Banach space cf. (\cite{GD}, p.56). When $a\in \overline{B}_{\|\cdot\|_1}(0,\delta(S))\cap \R^N_+$, we consider the operator $\Phi_a:{\mathcal X} \rightarrow C^0([0,T],E)$ defined by 
\begin{equation}\label{eq38}
\Phi_a(x) := [t \mapsto \xi_0 + \int_0^t f(s, x(s), u_a(s)) ds].
\end{equation}
This operator was used in \cite{BY}.
\begin{lemma}\label{lem35} The following assertions hold.
\begin{itemize}
\item[(i)] There exists $r_2\in \R_{+*}$ s.t. for all $a \in \R^N_+$, $\Vert a \Vert_1 \leq r_2 \Rightarrow \Phi_a(\mathcal{X}) \subset \mathcal{X}$.
\item[(ii)] For all $a \in \overline{B}_{\|\cdot\|_1}(0,r_2)\cap \R^N_+$, for all $x, z \in \mathcal{X}$, \\ $\Vert \Phi_a(x) - \Phi_a(z) \Vert_b \leq (1 - e^{-L \cdot T}) \Vert x - z \Vert_b$.
\item[(iii)] For all $x \in \mathcal{X}$, the mapping $[a \mapsto \Phi_a(x)]$ is continuous from $\overline{B}_{\|\cdot\|_1}(0,r_2) \cap \R^N_+$ into $\mathcal{X}$.
\end{itemize}
\end{lemma}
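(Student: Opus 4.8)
The plan is to prove the three assertions in the order (ii), (i), (iii); all of them rest on two preliminary observations. First, if $x\in\mathcal{X}$ then $\|x-x_0\|_b\le r_1=re^{-LT}$, hence $\|x(t)-x_0(t)\|\le r_1e^{Lt}\le r_1e^{LT}=r$ for every $t\in[0,T]$, i.e.\ $x(t)\in\overline{B}(x_0(t),r)$; moreover, for every admissible $a$ and every $t\in[0,T]$ one has $u_a(t)\in M$ (it equals some $v_i\in M$ or $u_0(t)\in M$). Second, $x_0$ satisfies the integral form $x_0(t)=\xi_0+\int_0^tf(s,x_0(s),u_0(s))\,ds$, and, by Lemma~\ref{lem113} applied to $(x,u_a)\in PC^0([0,T],\Omega\times U)$, the integrand defining $\Phi_a(x)$ lies in $PC^0([0,T],E)$, so that $\Phi_a(x)\in C^0([0,T],E)$ is well defined.

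For (ii): with $x,z\in\mathcal{X}$, the first observation together with Lemma~\ref{lem12} gives $\|f(s,x(s),u_a(s))-f(s,z(s),u_a(s))\|\le L\|x(s)-z(s)\|$, and, writing $\|x(s)-z(s)\|=e^{Ls}\,e^{-Ls}\|x(s)-z(s)\|\le e^{Ls}\|x-z\|_b$, one gets
\[
\|\Phi_a(x)(t)-\Phi_a(z)(t)\|\le L\int_0^te^{Ls}\,ds\;\|x-z\|_b=(e^{Lt}-1)\|x-z\|_b .
\]
Multiplying by $e^{-Lt}$ and taking the supremum over $t\in[0,T]$ (the function $t\mapsto1-e^{-Lt}$ being nondecreasing) yields $\|\Phi_a(x)-\Phi_a(z)\|_b\le(1-e^{-LT})\|x-z\|_b$; since $1-e^{-LT}<1$, this is the contraction estimate wanted. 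No restriction on $a$ is needed beyond admissibility.

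For (i): subtracting the integral form of $x_0$ and splitting the integrand,
\[
\Phi_a(x)(t)-x_0(t)=\int_0^t\big[f(s,x(s),u_a(s))-f(s,x_0(s),u_a(s))\big]ds+\int_0^t\big[f(s,x_0(s),u_a(s))-f(s,x_0(s),u_0(s))\big]ds .
\]
The first integral is estimated exactly as in (ii) (take $z=x_0$, use $\|x-x_0\|_b\le r_1$), hence is $\le r_1(e^{Lt}-1)$; the second is $\le\int_0^T\|f(s,x_0(s),u_a(s))-f(s,x_0(s),u_0(s))\|\,ds\le k\|a\|_1$ by Lemma~\ref{lem13}. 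Thus $e^{-Lt}\|\Phi_a(x)(t)-x_0(t)\|\le r_1(1-e^{-Lt})+k\|a\|_1e^{-Lt}=r_1+e^{-Lt}(k\|a\|_1-r_1)$, which is $\le r_1$ whenever $k\|a\|_1\le r_1$. I would therefore take $r_2:=\min\{\delta(S),\,r_1/k\}$: for $a\in\R^N_+$ with $\|a\|_1\le r_2$ this gives $\|\Phi_a(x)-x_0\|_b\le r_1$, i.e.\ $\Phi_a(x)\in\overline{B}(x_0,r_1)=\mathcal{X}$.

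For (iii): fix $x\in\mathcal{X}$; by (i), $\Phi_a(x)\in\mathcal{X}$ for every $a\in\overline{B}_{\|\cdot\|_1}(0,r_2)\cap\R^N_+$, so it suffices to prove that $a\mapsto\Phi_a(x)$ is sequentially continuous for the sup-norm (equivalent to $\|\cdot\|_b$ on $C^0([0,T],E)$). For $a_n\to a$ one has $\|\Phi_{a_n}(x)(t)-\Phi_a(x)(t)\|\le\int_0^T\|f(s,x(s),u_{a_n}(s))-f(s,x(s),u_a(s))\|\,ds$, and the integrand vanishes off $\{s:u_{a_n}(s)\ne u_a(s)\}$. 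A short case analysis — off $\bigcup_iI_i(a_n)\cup\bigcup_iI_i(a)$ both controls equal $u_0$; on $I_i(a_n)\cap I_i(a)$ both equal $v_i$; and for $j\ne i$, $I_i(a_n)\cap I_j(a)\subset I_i(a_n)\setminus I_i(a)$ since $I_1(a),\dots,I_N(a)$ are pairwise disjoint — shows $\{s:u_{a_n}(s)\ne u_a(s)\}\subset\bigcup_{i=1}^N\big(I_i(a_n)\triangle I_i(a)\big)$. As $b_i(\cdot)$ is linear in $a$, the endpoints $t_i+b_i(a_n)$ and $t_i+b_i(a_n)+a_{n,i}$ converge to those of $I_i(a)$, so $\mathrm{meas}\big(I_i(a_n)\triangle I_i(a)\big)\to0$ for each $i$. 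On that set the integrand is $\le 2C_x$, where $C_x:=\sup\{\|f(t,\xi,\zeta)\|:(t,\xi,\zeta)\in[0,T]\times x([0,T])\times M\}<\infty$ ($f$ continuous on a compact set); hence the integral is $\le2C_x\sum_{i=1}^N\mathrm{meas}\big(I_i(a_n)\triangle I_i(a)\big)\to0$, giving $\|\Phi_{a_n}(x)-\Phi_a(x)\|_b\to0$. I expect (i) and (ii) to be routine Bielecki-norm bookkeeping using only Lemmas~\ref{lem12}, \ref{lem13} and the integral equation for $x_0$; the genuine obstacle is (iii), namely correctly identifying the set where $u_{a_n}$ and $u_a$ disagree and controlling its Lebesgue measure, in particular handling the "stacked" needle intervals $I_i,I_j$ sharing a base point $t_i=t_j$ together with the continuous dependence $a\mapsto b_i(a)$.
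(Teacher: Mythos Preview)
Your proof is correct and follows the standard Bielecki-norm contraction argument that the paper defers to \cite{BY} (Lemmas 4.3--4.5 there): parts (i) and (ii) are exactly the split-and-estimate using Lemmas~\ref{lem12} and~\ref{lem13}, and your treatment of (iii) via the symmetric differences $I_i(a_n)\triangle I_i(a)$ is the intended measure-of-disagreement argument. The only cosmetic remark is that your case analysis in (iii) can be streamlined: once one observes that $u_{a_n}(s)=u_a(s)$ whenever $s$ lies in the complement of $\bigcup_i I_i(a_n)\cup\bigcup_i I_i(a)$ or in some $I_i(a_n)\cap I_i(a)$, the inclusion $\{u_{a_n}\ne u_a\}\subset\bigcup_i\bigl(I_i(a_n)\triangle I_i(a)\bigr)$ follows immediately without singling out the cross terms $I_i(a_n)\cap I_j(a)$.
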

\begin{proof} Using Lemma \ref{lem13} instead of Lemma 4.1 in \cite{BY} and Lemma \ref{lem12} instead of Lemma 4.2, the proof of (i) is similar to the proof of Lemma 4.3 of \cite{BY}, the proof of (ii) is similar to the proof of Lemma 4.4 of \cite{BY} and the proof of (iii) is similar to the proof of Lemma 4.5 of \cite{BY}  
\end{proof}
\begin{lemma}\label{prop38}
The following assertions hold.
\begin{itemize}
\item[(i)] For all $a \in \overline{B}_{\|\cdot\|_1}(0,r_2) \cap \R^N_+$, there exists a solution $x_a$ of the Cauchy problem (\ref{eq32}) which is defined on $[0,T]$ all over.
\item[(ii)] The mapping $[a \mapsto x_a]$, from $\overline{B}_{\|\cdot\|_1}(0,r_2) \cap \R^N_+$  into $\mathcal{X}$, is continuous.
\item[(iii)] There exists $k_1\in \R_{+*}$ such that, $\forall a\in \overline{B}_{\|\cdot\|_1}(0,r_2)\cap \R_+^N$, $\forall t\in [0,T]$, $\|x(t,a)-x_0(t)\| \le k_1\|a\|_1$.
\end{itemize}
\end{lemma}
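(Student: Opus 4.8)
The plan is to realize $x_a$ as the fixed point of the contraction $\Phi_a$ on the complete metric space $\mathcal{X}$ (closed ball in the Bielecki-normed Banach space $C^0([0,T],E)$), and then to transfer the quantitative continuity of $[a\mapsto\Phi_a(x)]$ to the fixed point via the standard perturbation estimate for contractions with a parameter. First, for part (i), I would fix $a\in\overline{B}_{\|\cdot\|_1}(0,r_2)\cap\R^N_+$; by Lemma \ref{lem35}(i) we have $\Phi_a(\mathcal{X})\subset\mathcal{X}$, and by Lemma \ref{lem35}(ii) $\Phi_a$ is a contraction on $\mathcal{X}$ with ratio $1-e^{-LT}<1$. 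Since $\mathcal{X}$ is a closed subset of the Banach space $(C^0([0,T],E),\|\cdot\|_b)$, it is a complete metric space, so Banach's fixed point theorem gives a unique $x_a\in\mathcal{X}$ with $\Phi_a(x_a)=x_a$, i.e. $x_a(t)=\xi_0+\int_0^t f(s,x_a(s),u_a(s))\,ds$ for all $t\in[0,T]$. Using the relation $x(t)-x(s)=\int_s^t \underline{d}x(r)\,dr$ recalled in the excerpt (together with Lemma \ref{lem113}, which shows $s\mapsto f(s,x_a(s),u_a(s))$ lies in $NPC^0_R([0,T],E)$ so the integrand is a genuine piecewise-continuous function), this integral identity is equivalent to $x_a\in PC^1([0,T],\Omega)$ solving $\underline{d}x_a(t)=f(t,x_a(t),u_a(t))$ with $x_a(0)=\xi_0$; hence $x_a$ is a solution of (\ref{eq32}) defined on all of $[0,T]$.

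For part (ii), I would use the classical stability estimate: for $a,a'\in\overline{B}_{\|\cdot\|_1}(0,r_2)\cap\R^N_+$, writing $x_a=\Phi_a(x_a)$ and $x_{a'}=\Phi_{a'}(x_{a'})$ and inserting $\Phi_a(x_{a'})$,
\[
\|x_a-x_{a'}\|_b \le \|\Phi_a(x_a)-\Phi_a(x_{a'})\|_b + \|\Phi_a(x_{a'})-\Phi_{a'}(x_{a'})\|_b \le (1-e^{-LT})\|x_a-x_{a'}\|_b + \|\Phi_a(x_{a'})-\Phi_{a'}(x_{a'})\|_b,
\]
so $\|x_a-x_{a'}\|_b \le e^{LT}\,\|\Phi_a(x_{a'})-\Phi_{a'}(x_{a'})\|_b$. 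By Lemma \ref{lem35}(iii) the map $[a\mapsto\Phi_a(x_{a'})]$ is continuous, so letting $a\to a'$ the right side tends to $0$; this gives continuity of $[a\mapsto x_a]$ at every $a'$, hence on all of $\overline{B}_{\|\cdot\|_1}(0,r_2)\cap\R^N_+$. (One should note $x_0$ here coincides with the fixed point at $a=0$, since $u_0=u_a|_{a=0}$ and the solution of the Cauchy problem is unique; this is consistent with the notation of the excerpt.)

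For part (iii), I would estimate $\|x_a-x_0\|_b$ directly. Since $x_a=\Phi_a(x_a)$ and $x_0=\Phi_0(x_0)$, the same insertion trick gives $\|x_a-x_0\|_b\le e^{LT}\|\Phi_a(x_0)-\Phi_0(x_0)\|_b$. Now for each $t\in[0,T]$,
\[
\|\Phi_a(x_0)(t)-\Phi_0(x_0)(t)\| = \Bigl\| \int_0^t \bigl(f(s,x_0(s),u_a(s))-f(s,x_0(s),u_0(s))\bigr)\,ds \Bigr\| \le \int_0^T \|f(s,x_0(s),u_a(s))-f(s,x_0(s),u_0(s))\|\,ds \le k\|a\|_1
\]
by Lemma \ref{lem13}, so $\|\Phi_a(x_0)-\Phi_0(x_0)\|_b\le k\|a\|_1$ and therefore $\|x_a-x_0\|_b\le k e^{LT}\|a\|_1$. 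Converting back from the Bielecki norm to the sup norm, $\|x(t,a)-x_0(t)\|\le e^{Lt}\|x_a-x_0\|_b\le e^{LT}\|x_a-x_0\|_b\le k e^{2LT}\|a\|_1$, so one may take $k_1:=k e^{2LT}$. The only mildly delicate point — the "main obstacle" — is the bookkeeping in part (i): one must check carefully that the $PC^1$-regularity and the pointwise extended-derivative equation $\underline{d}x_a=f(\cdot,x_a,u_a)$ on \emph{all} of $[0,T]$ (including the partition points and the endpoint $T$, with the one-sided derivatives) really do follow from the Riemann integral identity, which is exactly what the function-space machinery of Lemma \ref{lem113}, Lemma \ref{lem114} and the integral representation quoted from \cite{BY} is designed to deliver; the three estimates themselves are then routine.
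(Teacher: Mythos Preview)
Your argument is correct. Parts (i) and (ii) follow exactly the line the paper indicates (it simply cites Proposition 4.1 of \cite{BY}, which is the Banach fixed-point construction on $\mathcal{X}$ together with the standard continuity-of-fixed-points estimate you wrote out).

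For part (iii) you take a slightly different route from the paper. The paper works directly on the integral equation: it writes
\[
\|x_a(t)-x_0(t)\|\le \int_0^t L\,\|x_a(s)-x_0(s)\|\,ds + k\|a\|_1
\]
by splitting $f(s,x_a(s),u_a(s))-f(s,x_0(s),u_0(s))$ through $f(s,x_0(s),u_a(s))$ and invoking Lemmas \ref{lem12} and \ref{lem13}, and then applies Gronwall to get $k_1=k e^{LT}$. You instead recycle the Bielecki contraction estimate, obtaining $\|x_a-x_0\|_b\le e^{LT}\|\Phi_a(x_0)-\Phi_0(x_0)\|_b\le k e^{LT}\|a\|_1$ and then paying an extra $e^{LT}$ when converting back to the sup norm, so that $k_1=k e^{2LT}$. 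Both are perfectly valid; your approach is more economical in that it reuses the same machinery as (i)--(ii) and avoids invoking Gronwall separately, while the paper's direct Gronwall argument yields the sharper constant. Since the statement only asserts the existence of some $k_1$, either is acceptable.
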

\begin{proof}
For (i) and (ii), the proof is similar to the proof of the Proposition 4.1 in \cite{BY}, the only difference is to use of Lemma \ref{lem35} of the present paper instead of Lemmas 4.3, 4.4, 4.5 of \cite{BY}.\\
For (iii), we set $k_1=ke^{L\cdot T}$.
Let $a \in \overline{B}_{\|\cdot\|_1}(0,r_2)\cap \R_+^N$.
Since $x_a$ is a fixed point of $\Phi_a$ and $x_0$ is a fixed point of $\Phi_0$, for all $t\in[0,T]$, we have \\
$x_a(t)-x_0(t)= \xi_0 +\int_{0}^{t} f(s,x_a(s),u_a(s))ds -\xi_0 -\int_{0}^{t} f(s,x_0(s),u_0(s))ds$, which implies $\|x_a(t)-x_0(t)\| \le
\int_{0}^{t} \|f(s,x_a(s),u_a(s))-f(s,x_0(s),u_0(s))\|ds \\ 
\le \int_{0}^{t} \|f(s,x_a(s),u_a(s))-f(s,x_0(s),u_a(s))\|ds+ \\
\int_{0}^{t} \|f(s,x_0(s),u_a(s)) -f(s,x_0(s),u_0(s))\|ds.$\\
Using Lemma \ref{lem13} and \ref{lem12}, we have \\
$\|x_a(t)-x_0(t)\| \le \int_{0}^{t}(L\|x_a(s)-x_0(s)\|)ds+ k\|a\|_1$.\\
Consequently, using the lemma of Gronwall (\cite{PH}, p.24), we obtain,\\
 $\forall t\in [0,T]$, $\|x_a(t)-x_0(t)\|\le k\|a\|_1e^{\int_{0}^{T} L ds}=k_1\|a\|_1$, and so (iii) is proven.
\end{proof}  
%%%%
\begin{lemma}\label{xadfc}
There exists $\varrho: \overline{B}_{\|\cdot\|_1}(0,r_2) \cap \R^N_+\rightarrow E$ s.t. $\lim\limits_{\substack{a \to 0}} \varrho(a)=0$ and s.t. for all $a\in \overline{B}_{\|\cdot\|_1}(0,r_2) \cap \R^N_+$, $x_a(T)=x_0(T)+\mathfrak{L}\cdot a +\|a\|_1\varrho(a),$
where $\mathfrak{L}$ is provided by Lemma \ref{zaDif}.
\end{lemma}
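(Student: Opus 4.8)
The goal is to upgrade the first-order expansion of $z_a(T)$ from Lemma~\ref{zaDif} into a first-order expansion of the true trajectory endpoint $x_a(T)$, with the \emph{same} linear term $\mathfrak{L}\cdot a$ and a remainder that is $o(\|a\|_1)$. The natural strategy is to compare $x_a$ with the approximate trajectory $x_0 + z_a$: intuitively $x_0+z_a$ is the linearization of the flow around $(x_0,u_0)$ driven by the needlelike perturbation, so $w_a := x_a - x_0 - z_a$ should be of higher order. So first I would introduce $w_a(t) := x_a(t) - x_0(t) - z_a(t)$ and derive the integral equation it satisfies by subtracting the integral forms of \eqref{eq32}, \eqref{edoh}-type constant-variation formula for $z_a$, and the fixed-point identity $x_0 = \Phi_0(x_0)$. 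Using $\underline{d}z_a(t) = D_{F,2}f(t,x_0(t),u_0(t))\cdot z_a(t) + \Delta_a(t)$ and $\underline{d}x_a(t) = f(t,x_a(t),u_a(t)) = f(t,x_0(t),u_a(t)) + [f(t,x_a(t),u_a(t)) - f(t,x_0(t),u_a(t))]$, one gets
\[
w_a(t) = \int_0^t \big( f(s,x_a(s),u_a(s)) - f(s,x_0(s),u_a(s)) - D_{F,2}f(s,x_0(s),u_0(s))\cdot z_a(s) \big)\, ds .
\]

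Next I would split the integrand, on the set where $u_a=u_0$ and on the exceptional set $\bigcup_i I_i(a)$ where $u_a = v_i$. On $\bigcup_i I_i(a)$, whose total measure is $\|a\|_1$, the integrand is bounded: $x_a$ is $O(\|a\|_1)$-close to $x_0$ by Lemma~\ref{prop38}(iii), $z_a$ is $O(\|a\|_1)$ (from Lemma~\ref{zaDif} plus continuity in $a$, or directly from its integral formula and Lemma~\ref{lem13}), and $f$ is bounded on the relevant compacts; hence this part contributes $O(\|a\|_1^2) = o(\|a\|_1)$. On the complementary set, $u_a(s)=u_0(s)$, so the integrand becomes
\[
f(s,x_a(s),u_0(s)) - f(s,x_0(s),u_0(s)) - D_{F,2}f(s,x_0(s),u_0(s))\cdot z_a(s).
\]
I would write $f(s,x_a(s),u_0(s)) - f(s,x_0(s),u_0(s)) = D_{F,2}f(s,x_0(s),u_0(s))\cdot(x_a(s)-x_0(s)) + o(\|x_a(s)-x_0(s)\|)$ using Fréchet differentiability in the state variable (from (A{\sc v}1)); combined with $x_a-x_0 = z_a + w_a$ and $\|x_a(s)-x_0(s)\| \le k_1\|a\|_1$, this yields integrand $= D_{F,2}f(s,x_0(s),u_0(s))\cdot w_a(s) + \|a\|_1\, \sigma_a(s)$ where $\sup_s \|\sigma_a(s)\| \to 0$ as $a\to 0$. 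The delicate point here is uniformity of the little-$o$ in $s$: I would handle it by a Heine/compactness argument along $K = x_0([0,T])$, using (A{\sc v}1)-(A{\sc v}2) and Lemma~\ref{lem112}, exactly as in the treatment of the Nemytskii operator earlier, rather than pointwise differentiability alone.

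Putting the two pieces together gives a Gronwall-type inequality
\[
\|w_a(t)\| \le \int_0^t \|D_{F,2}f(s,x_0(s),u_0(s))\|\,\|w_a(s)\|\, ds + \|a\|_1\, \epsilon(a),
\]
with $\epsilon(a)\to 0$ as $a\to 0$ and $\|D_{F,2}f(\cdot,x_0(\cdot),u_0(\cdot))\|$ bounded on $[0,T]$ (by continuity, (A{\sc v}1)). Gronwall's lemma (\cite{PH}, p.24) then yields $\sup_{t\in[0,T]}\|w_a(t)\| \le \|a\|_1\, \epsilon(a)\, e^{\int_0^T\|D_{F,2}f\|}$, i.e. $w_a(T) = \|a\|_1\, \varrho_2(a)$ with $\varrho_2(a)\to 0$. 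Finally, invoking Lemma~\ref{zaDif} to write $z_a(T) = z_0(T) + \mathfrak{L}\cdot a + \|a\|_1\varrho_1(a) = \mathfrak{L}\cdot a + \|a\|_1\varrho_1(a)$ (since $z_0=0$), and adding $x_a(T) = x_0(T) + z_a(T) + w_a(T)$, I obtain $x_a(T) = x_0(T) + \mathfrak{L}\cdot a + \|a\|_1\varrho(a)$ with $\varrho := \varrho_1 + \varrho_2 \to 0$ as $a\to 0$. I expect the main obstacle to be the uniform-in-$s$ control of the first-order Taylor remainder for $f$ in the state variable on the non-exceptional set: this is where the boundedness-on-compacts hypotheses (A{\sc v}2) and Lemma~\ref{lem112} (Lipschitz estimate) do the real work, replacing a naive continuity-of-the-differential assumption.
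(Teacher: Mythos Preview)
Your overall architecture matches the paper's: set $w_a(t)=x_a(t)-x_0(t)-z_a(t)$ (the paper calls it $y_a$), show $w_a(T)=\|a\|_1\varrho_2(a)$ with $\varrho_2(a)\to 0$, and add this to Lemma~\ref{zaDif}. The differences are in the bookkeeping and in one genuine technical point.

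\textbf{Different decomposition and tool.} You split the time domain into the exceptional set $\bigcup_i I_i(a)$ and its complement, and then close with Gronwall. The paper instead keeps the time domain whole and splits the \emph{integrand}: writing
\[
\gamma_a(t):=\underline{d}y_a(t)-D_{F,2}f(t,x_0(t),u_0(t))\cdot y_a(t),
\]
it adds and subtracts $D_{F,2}f(t,x_0(t),u_a(t))\cdot(x_a(t)-x_0(t))$ to get $\gamma_a(t)=\|x_a(t)-x_0(t)\|\,\varepsilon_a^1(t)+\varepsilon_a^2(t)\cdot(x_a(t)-x_0(t))$, where $\varepsilon_a^1$ is the Fr\'echet Taylor remainder at control $u_a(t)$ and $\varepsilon_a^2(t)=D_{F,2}f(t,x_0(t),u_a(t))-D_{F,2}f(t,x_0(t),u_0(t))$. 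It then uses the variation-of-constants formula $y_a(T)=\int_0^T R(T,s)\gamma_a(s)\,ds$ (rather than Gronwall) and estimates $\int_0^T\|\gamma_a\|$ directly. Both routes are viable; the resolvent route avoids having to peel off the linear $D_{F,2}f\cdot w_a$ term before estimating.

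\textbf{The gap.} Your claim that $\sup_s\|\sigma_a(s)\|\to 0$ via ``Heine/compactness \ldots\ using (A{\sc v}1)--(A{\sc v}2) and Lemma~\ref{lem112}'' is not justified by the hypotheses. (A{\sc v}1) gives Fr\'echet differentiability of $f(t,\cdot,\zeta)$ only \emph{at} $x_0(t)$; away from $x_0(t)$ you have merely G\^ateaux differentiability with a compact-boundedness condition (A{\sc v}2). Lemma~\ref{lem112} turns that into a uniform Lipschitz bound, but a Lipschitz bound does not upgrade pointwise Fr\'echet differentiability to a uniform-in-$s$ little-$o$. A Heine argument would need continuity of the differential in the state variable, which is not assumed. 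The paper sidesteps exactly this: it proves only \emph{pointwise} convergence $\|\varepsilon_a^1(t)\|\to 0$ for each $t$, together with the uniform bound $\|\varepsilon_a^1(t)\|\le L_2$ from (A{\sc v}2) and Lemma~\ref{lem12}, and then applies the Dominated Convergence Theorem to conclude $\int_0^T\|\varepsilon_a^1(t)\|\,dt\to 0$. Your Gronwall argument is easily repaired the same way: replace the uniform claim by $\int_0^T\|\sigma_a(s)\|\,ds\to 0$ via DCT (pointwise convergence from Fr\'echet differentiability at $x_0(s)$ plus the $L_2$-type bound), and feed that integral into the inhomogeneous term $\|a\|_1\epsilon(a)$ before Gronwall.
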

\begin{proof}
We arbitrarily fix $a\in \overline{B}_{\|\cdot\|_1}(0,r_2) \cap \R^N_+$, and we introduce, $\forall t\in [0,T]$,
\begin{equation}\label{214}
y_a(t)=(x_a(t)-z_a(t))-(x_0(t)-z_0(t))=x_a(t)-z_a(t)-x_0(t).    
\end{equation}
and 
\begin{equation}\label{215}
\gamma_a(t):={\underline d}y_a(t)-D_{F,2}f(t,x_0(t),u_0(t))\cdot y_a(t).
\end{equation}
Doing a straightforward calculation, we obtain $\underline{d}y_a(t)=f(t,x_a(t),u_a(t))-\\
D_{F,2}f(t,x_0(t),u_0(t))\cdot z_a(t)-f(t,x_0(t),u_a(t))$, and consequently $\forall t\in [0,T]$, 
\begin{equation}\label{216}
\gamma_a(t):=f(t,x_a(t),u_a(t))-f(t,x_0(t),u_a(t))-D_{F,2}f(t,x_0(t),u_0(t))\cdot (x_a(t)-x_0(t)).
\end{equation}
For all $t\in [0,T]$, we define $\varepsilon_a^1(t):=0$ if $x_a(t)=x_0(t)$, and 
\\
$\varepsilon_a^1(t):=\frac{1}{\|x_a(t)-x_0(t)\|} (f(t,x_a(t),u_a(t))-f(t,x_0(t),u_a(t))-D_{F,2}f(t,x_0(t),u_a(t))\cdot (x_a(t)-x_0(t)))$ if $x_a(t)\neq x_0(t)$. We also define $\varepsilon_a^2(t)=D_{F,2}f(t,x_0(t),u_a(t))-D_{F,2}f(t,x_0(t),u_0(t))$.\\
Doing a straightforward calculation we obtain 
\begin{equation}\label{217}
\gamma_a(t)=\|x_a(t)-x_0(t)\|\varepsilon_a^1(t)+\varepsilon_a^2(t)\cdot (x_a(t)-x_0(t)).
\end{equation}
Now, we study the properties of $\varepsilon_a^1$. Let $a\in \overline{B}_{\|\cdot\|_1}(0,r_2)\cap \R_+^N$. Let $t_0 \in[0,T]$ s.t. $x_a(t_0)=x_0(t_0)$. Since $x_a$ and $x_0$ are continuous, there exists $\nu >0$ s.t. $x_a(t)\neq x_0(t)$ when $t\in \,]t_0-\nu,t_0+\nu[$. Using the continuity of $x_a$ and $x_0$, the piecewise continuity of $u_a$ and $u_0$, the continuity of $f$, Lemma \ref{lem113} and (A{\sc v}1), we obtain that $\varepsilon_a^1(t_0+)$ and $\varepsilon_a^1(t_0-)$ exit in $E$.
When $x_a(t_0)\neq x_0(t_0)$, from the existence of $D_{F,2}f(t,x_0(t),u_a(t))$ we have 
\begin{equation}\label{218}
\left.
\begin{array}{l}
\forall \epsilon >0,\, \exists \mathfrak{d}_{\epsilon,a}>0,\, \forall \xi \in E,\|\xi-x_0(t)\|\le \mathfrak{d}_{\epsilon,a}\Rightarrow \\
\|f(t,\xi,u_a(t))-f(t,x_0(t),u_a(t))\\-D_{F,2}f(t,x_0(t),u_a(t))\cdot (\xi-x_0(t))\|\le \epsilon \|\xi-x_0(t)\|.
\end{array}
\right\}
\end{equation}
Since $\lim\limits_{\substack{t\to t_0}}(x_a(t)-x_0(t))=x_a(t_0)-x_0(t_0)=0$, when we fix $\epsilon>0$, there exists $\mathfrak{b}_{\epsilon,a}>0$ s.t. $t_0<t<t_0+\mathfrak{b}_{\epsilon,a} \Rightarrow \|x_a(t)-x_0(t)\|\le \mathfrak{d}_{\epsilon,a}$
$\Rightarrow \|f(t,x_a(t),u_a(t))-f(t,x_0(t),u_a(t))-D_{F,2}f(t,x_0(t),u_a(t))\cdot (x_a(t)-x_0(t))\| \le \epsilon\|x_a(t)-x_0(t)\|$ thanks to (\ref{218}). Therefore $\|\varepsilon_a^1(t)\| \le \epsilon$ if $x_a(t) \neq x_0(t)$ or if $x_a(t)=x_0(t)$. Hence, we have proven that $\|\varepsilon_a^1(t_0+)\|=0$. Similarly, we obtain $\|\varepsilon_a^1(t_0-)\|=0$. Consequently, we have proven
 \begin{equation}\label{219}
\|\varepsilon_a^1\|\in Reg([0,T],\R)  
\end{equation}
where $Reg([0,T],\R)$ denotes the space of the regulated functions from $[0,T]$ into $\R$ cf. \cite{Di} (Chapter 7, Section 6). Hence, $\|\varepsilon_a^1\|$ is Riemann integrable on $[0,T]$ and also Borel integrable on $[0,T]$.\\
From (A{\sc v}2), we know that $L_1:=\sup_{\zeta \in M} \|D_{F,2}f(t,x_0(t),\zeta)\| <+\infty$, and using Lemma \ref{lem12}, we obtain: $\|\varepsilon_a^1(t)\| \le \max\{0,L\}+L_1=:L_2$, and so we have 
\begin{equation}\label{220}
\exists L_2\in\R_{+*},\, \forall a \in \overline{B}_{\|\cdot\|_1}(0,r_2)\cap \R_+^N,\, \forall t\in[0,T],\, \|\varepsilon_a^1(t)\|\le L_2.
\end{equation}
We introduce the mapping $\Theta: \Omega \times [0,T]\times U \rightarrow E$ defined by 
\\$\Theta(\xi,t,\zeta):=\frac{1}{\|\xi-x_0(t)\|} (f(t,\xi,\zeta)-f(t,x_0(t),\zeta)-D_{F,2}f(t,x_0(t),\zeta)\cdot (\xi-x_0(t)))$ when $\xi\neq x_0(t)$ and $\Theta(\xi,t,\zeta)=0$ when $\xi=x_0(t)$.
We fix $(t,\zeta)\in [0,T]\times U$. From (A{\sc v}1), for all $\epsilon >0$, there exists $\mathfrak{d}_\epsilon >0$ s.t. $\|\xi-x_0(t)\|\le \mathfrak{d}_\epsilon \Rightarrow \|f(t,\xi,\zeta)-f(t,x_0(t),\zeta)-D_{F,2}f(t,x_0(t),\zeta)\cdot (\xi-x_0(t))\| \le \epsilon \|\xi-x_0(t)\|$ which implies    
\begin{equation}\label{221}
\forall (t,\zeta)\in [0,T]\times U,\,     \lim\limits_{\substack{\xi \to x_0(t)}} \Theta(\xi,t,\zeta)=0. 
\end{equation}
We fix $t\in [0,T]$, for all $a\in \overline{B}_{\|\cdot\|_1}(0,r_2)\cap \R_+^N$, we have 
\[
\begin{array}{l}
\|\varepsilon_1(t,a)\|=\|\Theta(x_a(t),t,u_a(t))\| = \|1_{[0,t_1[}(t) \Theta(x_a(t),t,u_0(t))+\\ \sum_{i=1}^N 1_{I_i(a)}(t) \Theta(x_a(t),t,v_i)+ \sum_{i=1}^{N -1} 1_{[t_i + b_i(a) + a_i, t_{i+1} + b_{i+1}(a)[}(t) \Theta(x_a(t),t,u_0(t))\\+ 1_{[t_N + b_N(a) + a_N, T]}(t) \Theta(x_a(t),t,u_0(t))\|.\\
\le (N+1)\|\Theta(x_a(t),t,u_0(t))\|+ \sum_{i=1}^N \|\Theta(x_a(t),t,v_i)\|
\end{array}
\]
and using (\ref{221}), we obtain 
\begin{equation}\label{222}
\forall t\in[0,T], \, \lim\limits_{\substack{a\to 0}}\|\varepsilon_a^1(t)\|=0.
\end{equation}
From (\ref{219}), (\ref{220}) and (\ref{222}), since the constants are Lebesgue integrable, using the Dominated Convergence Theorem of Lebesgue we obtain 
\begin{equation}\label{223}
\lim\limits_{\substack{a\to 0}}\int_{0}^{T} \|\varepsilon_a^1(t)\|\,dt =\int_{0}^{T} \lim\limits_{\substack{a \to 0}}\|\varepsilon_a^1(t)\|\, dt=0.
\end{equation}
Using (A{\sc v}1) and Lemma \ref{lem113}, we see that $\varepsilon_a^2$ is a difference of two piecewise continuous functions on $[0,T]$, and consequently we have
\begin{equation}\label{224}
\text{for all } a \in \overline{B}_{\|\cdot\|_1}(0,r_2)\cap \R_+^N, \, \|\varepsilon_a^2\| \in PC^0([0,T],\R)
\end{equation}
hence $\|\varepsilon_a^2\|$ is Riemann integrable and Lebesgue integrable on $[0,T]$.
Besides, we have also $\int_{0}^{T}\|\varepsilon_a^2(t)\|dt \le \sum_{1\le i\le N} \int_{t_i+b_i}^{t_i+b_i+a_i} (2L_1)dt  =2L_1\|a\|_1$, and so we obtain 
\begin{equation}\label{225}
\lim\limits_{\substack{a \to 0}} \int_{0}^{T}\|\varepsilon_a^2(t)\|dt=0.
\end{equation}
From (\ref{217}), (\ref{223}) and (\ref{225}), we have        
$\|\gamma_a(t)\|  \le \|x_a(t)-x_0(t)\|\|\varepsilon_1(t,a)\|+\|\varepsilon_2(t,a)\|\|x_a(t)-x_0(t)\|
\le k_1\|a\|_1(\|\varepsilon_a^1(t)\|+\|\varepsilon_a^2(t)\|)\\
\Rightarrow \int_{0}^{T}\|\gamma_a(t)\|dt \le k_1\|a\|_1( \int_{0}^{T} \|\varepsilon_a^1(t)\|dt +\int_{0}^{T}\|\varepsilon_a^2(t)\|dt)$.
Consequently, using (\ref{223}) and (\ref{225}), we have 
\begin{equation}\label{226}
\lim\limits_{\substack{a\to 0}} \left(\frac{1}{\|a\|}\int_{0}^{T}\|\gamma_a(t)\|dt\right)=0.
\end{equation}
From (\ref{215}) and the formula of the Variation of Constants, we obtain $y_a(T)=\int_{0}^{T} R(T,s)\cdot \gamma_a(s)ds$. We introduce $\varpi(a):=0$ when $a=0$ and \\
$\varpi(a):=\frac{1}{\|a\|} \int_{0}^{T} R(T,s)\cdot \gamma_a(s)ds$ when $a\neq 0$; hence we have $y_a(T)=\|a\|_1\varpi(a)$.\\
Since $R(T,\cdot)$ is piecewise continuous, it is bounded. We set $\mathfrak{q}:=\sup_{0\le s\le T}\|R(T,s)\|$. We have $\|\varpi(a)\|\le \mathfrak{q}.\frac{1}{\|a\|} \int_{0}^{T} \|\gamma_a(s)\|ds$ when $a\neq 0$, and using (\ref{226}), we obtain $\lim\limits_{\substack{a \to 0}}\|\varpi(a)\|=0$, i.e. $\lim\limits_{\substack{a \to 0}} \varpi(a)=0$. Using (\ref{214}) and Lemma \ref{zaDif}, we obtain that \\
$x_a(T)=x_0(T)+z_a(T)+y_a(T)=x_0(T)+\mathfrak{L}\cdot a+\|a\|_1(\varrho_1(a)+\varpi(a))$. Setting $\varrho(a):=\varrho_1(a)+\varpi(a)$ we have $\lim\limits_{\substack{a\to 0}}\varrho(a)=0$, and the lemma is proven.
\end{proof}
\begin{lemma}\label{lem222} Let $S = ((t_i,v_i))_{1 \leq i \leq N} \in \S$. There exist $(\lambda^S_{\alpha})_{0 \leq \alpha \leq m} \in \R^{1 + m}$ and $(\mu^S_{\beta})_{1 \leq \beta \leq q} \in \R^q$ which satisfy the following conditions.
\begin{itemize}
\item[(a)] $(\lambda^S_{\alpha})_{0 \leq \alpha \leq m}$ and $(\mu^S_{\beta})_{1 \leq \beta \leq q}$ are not simulteanous equal to zero.
\item[(b)] $\forall \alpha \in \{ 0, ..., m\}$, \; \; $\lambda^S_{\alpha} \geq 0$.
\item[(c)] $\forall \alpha \in \{1,...,m\}$, \; \; $\lambda^S_{\alpha}g^{\alpha}(x_0(T)) = 0$.
\item[(d)] $\forall i \in \{ 1,...,N\}$, $p(t_i) [f(t_i, x_0(t_i), v_i) - f(t_i, x_0(t_i), u_0(t_i))] \leq 0$, where\\
$p(t) := (\sum_{\alpha = 0}^m \lambda^S_{\alpha} D_H g^{\alpha}(x_0(T)) + \sum_{\beta = 1}^q \mu^S_{\beta} D_Hh^{\beta}(x_0(T))) R(T,t)$, $R(t,s)$ being defined just before Lemma \ref{zaDif}.
\end{itemize}
\end{lemma}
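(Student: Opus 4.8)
The plan is to reduce the conclusion to the first-order optimality conditions of a finite-dimensional nonlinear program in the variable $a\in\R^N_+$ near $0$, and then to invoke a Fritz-John type multiplier rule.

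First I would work on the neighborhood $\mathcal{U}:=\overline{B}_{\|\cdot\|_1}(0,r_2)\cap\R^N_+$ of $0$, where $r_2$ is provided by Lemma \ref{lem35}. For $a\in\mathcal{U}$, Lemma \ref{prop38} ensures that $x_a$ is defined on all of $[0,T]$ and $x_a\in\mathcal{X}\subset C^0([0,T],\Omega)$; since $x_a$ solves (\ref{eq32}) with $u_a\in NPC^0_R([0,T],U)$, we have $x_a\in PC^1([0,T],\Omega)$, so $(x_a,u_a)$ satisfies all the constraints of $(\mathcal{M})$ except, possibly, the terminal ones. I would then define on $\mathcal{U}$ the functions $G^\alpha(a):=g^\alpha(x_a(T))$ for $0\le\alpha\le m$ and $H^\beta(a):=h^\beta(x_a(T))$ for $1\le\beta\le q$, so that $G^\alpha(0)=g^\alpha(x_0(T))$ and $H^\beta(0)=0$. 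If $a\in\mathcal{U}$ satisfies $G^\alpha(a)\ge0$ for $1\le\alpha\le m$ and $H^\beta(a)=0$ for $1\le\beta\le q$, then $(x_a,u_a)$ is an admissible process of $(\mathcal{M})$, hence, since $f^0=0$ and $(x_0,u_0)$ is a solution, $G^0(a)=g^0(x_a(T))=J(x_a,u_a)\le J(x_0,u_0)=G^0(0)$. Therefore $a=0$ is a local maximizer of $G^0$ on $\{a\in\R^N_+:G^\alpha(a)\ge0\ (1\le\alpha\le m),\ H^\beta(a)=0\ (1\le\beta\le q)\}$.

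Second, I would establish that $G^\alpha$ and $H^\beta$ are differentiable at $a=0$ (one-sided, on the cone $\R^N_+$). From Lemma \ref{xadfc}, $x_a(T)=x_0(T)+\mathfrak{L}\cdot a+\|a\|_1\varrho(a)$ with $\varrho(a)\to0$, and from Lemma \ref{prop38}(iii), $\|x_a(T)-x_0(T)\|\le k_1\|a\|_1$; combining these with the Hadamard differentiability of $g^\alpha$ and $h^\beta$ at $x_0(T)$ (assumptions (A{\sc t}1) and (A{\sc t}2)) and the chain rule for Hadamard differentials, I would get that $G^\alpha$ and $H^\beta$ are differentiable at $0$ with $DG^\alpha(0)=\ell^\alpha:=D_Hg^\alpha(x_0(T))\circ\mathfrak{L}$ and $DH^\beta(0)=\mathfrak{m}^\beta:=D_Hh^\beta(x_0(T))\circ\mathfrak{L}$, which are linear forms on $\R^N$. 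Here the use of Hadamard (rather than merely G\^ateaux) differentiability of the terminal functions is exactly what makes this chain rule valid when composing with the map $a\mapsto x_a(T)$, which is a priori not smooth; moreover, from the continuity of $a\mapsto x_a$ (Lemma \ref{prop38}(ii)) and (A{\sc t}2), each $H^\beta$ is continuous on a neighborhood of $0$.

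Third, I would apply to the program of the first paragraph the multiplier rule announced in the introduction, an improvement of one in \cite{BL}, which treats the cone constraint $a\in\R^N_+$, the inequality constraints $G^\alpha\ge0$ and the equality constraints $H^\beta=0$ with data that are merely differentiable at the point under consideration. This would yield $(\lambda^S_\alpha)_{0\le\alpha\le m}\in\R^{1+m}$ with $\lambda^S_\alpha\ge0$ for all $\alpha$, $(\sigma_i)_{1\le i\le N}\in\R^N_+$ and $(\mu^S_\beta)_{1\le\beta\le q}\in\R^q$, not all zero, with $\lambda^S_\alpha G^\alpha(0)=0$ for $1\le\alpha\le m$ and, after possibly changing $\mu^S_\beta$ into $-\mu^S_\beta$, the stationarity relation of linear forms on $\R^N$
\[
\sum_{\alpha=0}^m\lambda^S_\alpha\ell^\alpha+\sum_{\beta=1}^q\mu^S_\beta\mathfrak{m}^\beta=-\sum_{i=1}^N\sigma_i e_i^*,
\]
where $e_i^*$ denotes the $i$-th coordinate form. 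Conditions (b) and (c) are then exactly the sign and the complementary-slackness relations. Evaluating this stationarity relation at $a=e_i$, using $\mathfrak{L}\cdot e_i=R(T,t_i)\cdot[f(t_i,x_0(t_i),v_i)-f(t_i,x_0(t_i),u_0(t_i))]$ (Lemma \ref{zaDif}) and the definition of $p$ in (d), I would obtain $p(t_i)[f(t_i,x_0(t_i),v_i)-f(t_i,x_0(t_i),u_0(t_i))]=-\sigma_i\le0$, which is (d). Finally, if all the $\lambda^S_\alpha$ and all the $\mu^S_\beta$ vanished, the stationarity relation would force $\sum_{i=1}^N\sigma_i e_i^*=0$, hence all $\sigma_i=0$, contradicting the non-triviality of the multipliers; so $((\lambda^S_\alpha)_{0\le\alpha\le m},(\mu^S_\beta)_{1\le\beta\le q})\ne0$, which is (a).

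The step I expect to be the main obstacle is having at one's disposal, and applying correctly, a multiplier rule adapted to this program: the constraint functions are only known to be differentiable at the single point $a=0$ (not of class $C^1$ near it) and there is a nonnegativity-cone constraint, so the classical Karush-Kuhn-Tucker statement does not apply directly and the refinement of \cite{BL} is needed. A secondary, more technical point is the chain-rule step of the second paragraph, where the Hadamard differentiability of $g^\alpha$ and $h^\beta$ is precisely the hypothesis that allows composition with the expansion of $x_a(T)$ furnished by Lemma \ref{xadfc}.
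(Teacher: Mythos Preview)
Your overall strategy---reduce to a finite-dimensional program at $a=0$, differentiate the terminal map via Lemma~\ref{xadfc}, compose with the Hadamard-differentiable terminal data, and invoke a Fritz--John rule---is exactly the paper's. The difference is in how the domain issue you flag as the ``main obstacle'' is handled.

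You work on $\mathcal U=\overline{B}_{\|\cdot\|_1}(0,r_2)\cap\R^N_+$ and speak of ``one-sided'' differentiability on the cone. The paper instead invokes Proposition~4.2 of \cite{BY} to produce, from the expansion of Lemma~\ref{xadfc}, an extension $\tilde x\in C^0(\overline{B}_{\|\cdot\|_1}(0,r_3),\Omega)$ defined on a \emph{full} ball, Fr\'echet differentiable at $0$ with $D_F\tilde x(0)=\mathfrak L$, and coinciding with $a\mapsto x_a(T)$ on the nonnegative orthant. This single step removes both difficulties you anticipated: the chain rule for Hadamard differentials now applies cleanly (inner map Fr\'echet differentiable on an open set, outer maps Hadamard differentiable), so $g^\alpha\circ\tilde x$ and $h^\beta\circ\tilde x$ are genuinely Hadamard differentiable at $0$; and the cone constraint becomes $N$ ordinary affine inequalities $b_i^*a\ge0$ on an open domain, so no special cone-constrained multiplier rule is needed. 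The paper then applies the multiplier rule of \cite{HY} (Theorem~2.2), not the one of \cite{BL} you cite; \cite{HY} is precisely the version that accepts Hadamard (rather than Fr\'echet) differentiability at the point and continuity of the equality constraints near it.

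So your proposal is correct in outline but incomplete at exactly the point you yourself identified: without the extension device of \cite{BY}, Proposition~4.2, the chain-rule step and the applicability of the multiplier rule are not justified as stated. Once you insert that extension, your third paragraph goes through verbatim and the unpacking of (a)--(d) from the multiplier conclusions is the same as the paper's.
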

\begin{proof}
Using Lemma \ref{xadfc}, the Proposition 4.2 of \cite{BY} ensures the existence of $r_3\in \, ]0,r_2]$ and  a function $\tilde{x} \in C^0(\overline{B}_{\|\cdot\|_1}(0,r_3), \Omega)$ which is Fr\'echet differentiable at $a = 0$ and which satisfies, for all  $a \in  \overline{B}_{\|\cdot\|_1}(0,r_3) \cap \R^N_+$, $\tilde{x}(a) = x_a(T)$, and $D_F \tilde{x} (0) = \mathfrak{L}\cdot a$.
Since $(x_0,u_0)$ is a solution of $(\mathcal{M})$, $a=0$ is a solution of the following finite-dimensional optimization problem     
\[
(\mathcal{F}_S^1) :=
\left\{
\begin{array}{cl}
{\rm Maximize} & g^0(\tilde{x}(a)) \\
{\rm subject} \; \; {\rm to} & a \in B(0, r_3) \\
\null & \forall \alpha \in \{ 1, ..., m\}, \; \; g^{\alpha} ( \tilde{x} (a)) \geq 0 \\
\null & \forall \beta \in \{ 1, ..., q\}, \; \; h^{\beta}( \tilde{x} (a)) = 0 \\
\null & \forall i \in \{ 1, ..., N\}, \; \; b^*_i a \geq 0
\end{array}
\right.
\]
where $(b^*_i)_{1 \leq i \leq N}$ is the dual basis of the canonical basis of $\R^N$.\\
Since $\tilde{x}$ is Fr\'echet differentiable at $0$, by using (A{\sc t}1) and (A{\sc t}2), we have, $g^\alpha \circ \tilde{x}$ when $\alpha \in \{0,...,m\}$ and $h^\beta\circ \tilde{x}$ when $\beta\in\{1,...,q\}$ are Hadamard differentiable at $0$. Moreover, since $\tilde{x} \in C^0(\overline{B}_{\|\cdot\|_1}(0,r_3), \Omega)$, by using (A{\sc t}2), for all $\beta\in\{1,...,q\}$, $h^\beta\circ \tilde{x}$ is continuous on a neighborhood of 0. Hence we can use the Multiplier rule of \cite{HY} (Theorem 2.2) to obtain our result.
\end{proof}
With respect to Lemma 5.1 of \cite{BY}, in Lemma \ref{lem222}, the Hadamard differentiability replaces the Fr\'echet differentiability.\\
To finish the proof of Theorem \ref{th22}, we exactly proceed as in Subsection 5.2 of \cite{BY}. We just recall the schedule of the reasoning. For all $S\in \S$, we consider $K(S)$ which is the set of the $((\lambda_\alpha)_{0\le \alpha\le m},(\mu_\beta)_{1\le \beta \le q})\in \R^{1+m+q}$ which satisfy the conclusions (a,b,c,d) of Lemma \ref{lem222} and $\sum_{0\le \alpha \le m} |\lambda_\alpha|+\sum_{1\le \beta\le q} |\mu_\beta|=1$. $\Sigma(0,1)$ being the unit sphere of $\R^{1+m+q}$, $K(S)$ is a non-empty closed subset of $\Sigma(0,1)$. Since $\Sigma(0,1)$, $(K(S))_{S\in \S}$ possesses the finite intersection property (\cite{LA}, p.31) and consequently we have $\bigcap_{S\in\S} K(S) \neq \emptyset$. An element of this intersection is convenient for the conclusions (NN), (Si), (S$\ell$), (AE.M) and (MP.M) of Theorem \ref{th22}. The conclusions (CH.M) is proven by Lemma 5.2 of \cite{BY}.\\
The proof of Corollary \ref{cor22} is similar to the proof of Part (II) of Theorem 2.2 in \cite{BY} which is given in Subsection 5.3 in \cite{BY}.\\
To prove assertion (i) of Corollary \ref{cor23}, we proceed by contradiction, we assume the existence of $s\in[0,T]$ s.t. $(\lambda_0,p(s))=(0,0)$. Since $p(s)=0$, we have $p(T)=0$ since (AE.M) is linear homogeneous, and, from (TC), we have $\sum_{\alpha = 1}^m \lambda_{\alpha} D_H g^{\alpha}(x_0(T)) + \sum_{\beta = 1}^q \mu_{\beta} D_Hh^{\beta}(x_0(T))=0$. Hence using (TC), (Si), (S${\ell}$), (QC, 1) implies that $(\forall \alpha \in\{ 1,...,m\}, \lambda_{\alpha} = 0)$ and $(\forall \beta \in\{ 1,...,q\}, \mu_{\beta} = 0)$. Moreover, since $\lambda_0 = 0$, we obtain a contradiction with (NN).\\
The proof of assertion (ii) of Corollary \ref{cor23} is similar to the proof of Part (III) of Theorem 2.2 in \cite{BY}.\\
To prove assertion (iii) of Corollary \ref{cor23}, we proceed by contradiction; we assume that $\lambda_0=0$. Since $D_{G,3}f(\hat{t},x_0(\hat{t}), u_0(\hat{t}))$ exists, $D_{G,3}H_{M}(\hat{t},x_0(\hat{t}),u_0(\hat{t}),p(\hat{t}))$ exists and $D_{G,3}H_{M}(\hat{t},x_0(\hat{t}),u_0(\hat{t}),p(\hat{t}))=p(\hat{t})\circ D_{G,3}f(\hat{t},x_0(\hat{t}),u_0(\hat{t}))$.
Therefore, by using (MP.M), we have $p(\hat{t})\circ D_{G,3}f(\hat{t},x_0(\hat{t}),u_0(\hat{t}))=0$, and since $D_{G,3}f(\hat{t},x_0(\hat{t}),u_0(\hat{t}))$ is surjective, we have $p(\hat{t})=0$, hence $(\lambda_0,p(\hat{t}))=0$ that is a contradiction with the assertion (i).
We have proven that $\lambda_0 \neq 0$, and it suffices to divide all the multipliers and $p$ by $\lambda_0$ to obtain the assertion (iii).\\
To prove the assertion (iv) of Corollary \ref{cor23}, we begin to prove that $\lambda_0 \neq 0$. To do that, we proceed by contradiction, we assume that $\lambda_0=0$.\\
 Since $D_{G,3}f(T,x_0(T), u_0(T))$ exists, $D_{G,3}H_{M}(T,x_0(T),u_0(T),p(T))$ exists and we have $D_{G,3}H_{M}(T,x_0(T),u_0(T),p(T))=p(T)\circ D_{G,3}f(T,x_0(T),u_0(T))$.\\
From (MP.M), we obtain $p(T)\circ D_{G,3}f(T,x_0(T),u_0(T))=0$.\\
From (TC), we obtain:
$
\sum_{\alpha=1}^{m} \lambda_\alpha D_Hg^\alpha(x_0(T))\circ D_{G,3}f(T,x_0(T),u_0(T)) \\+
\sum_{\beta=1}^{q} \mu_\beta D_Hh^\beta(x_0(T))\circ D_{G,3}f(T,x_0(T),u_0(T))
=p(T)\circ D_{G,3}f(T,x_0(T),u_0(T))=0$, and by using (LI), we obtain $((\lambda_\alpha)_{1\le \alpha\le m},(\mu_\beta)_{1\le \beta \le q})=0$, which is a contradiction with (NN). Hence, we have proven that $\lambda_0 \neq 0$. Dividing $\lambda_\alpha, \mu_\beta,p$ by $\lambda_0$, we normalize all these terms, and we have $\lambda_0=1$.
To prove the uniqueness, let $((\lambda^1_\alpha)_{0\le \alpha\le m},(\mu^1_\beta)_{1\le \beta \le q},p^1)\in \R^{1+m+q}\times PC^1([0,T],E^*)$ which satisfy the conclusions of the Theorem \ref{th22} are verified with $\lambda_0^1=1$.\\
From (MP.M), we have, $p^1(T)\circ D_{G,3}f(T,x_0(T),u_0(T))=0$, therefore, we have $(p(T)-p^1(T))\circ D_{G,3}f(T,x_0(T),u_0(T))=0$ and from (TC), we obtain   
$
\sum_{\alpha=1}^{m} (\lambda_\alpha-\lambda^1_\alpha) D_Hg^\alpha(x_0(T))\circ D_{G,3}f(T,x_0(T),u_0(T)) \\+
\sum_{\beta=1}^{q} (\mu_\beta-\mu^1_\beta) D_Hh^\beta(x_0(T))\circ D_{G,3}f(T,x_0(T),u_0(T))=0.
$ 
Hence, using (LI), $(\lambda_\alpha)_{0\le \alpha \le m}=(\lambda^1_\alpha)_{0\le \alpha \le m}$ \text{ and }$(\mu_\beta)_{1\le \beta\le q}=(\mu^1_\beta)_{1\le \beta\le q}$, and consequently, using (TC) we obtain $p(T)=p^1(T)$. Using the uniqueness of the solution of a Cauchy problem on (AE.M), we obtain $p=p^1$. Corollary \ref{cor23} is proven.  
\subsection{Proof of the results of the problem of Bolza}
As in \cite{BY}, we transform the problem of Bolza into a problem of Mayer to deduce Theorem \ref{th21} from Theorem \ref{th22}. That is why, we introduce an additional state variable denoted by $\sigma$. We set ${\mathfrak X} := (\sigma, \xi) \in \R \times \Omega$ as a new state variable; we set $F(t, (\sigma,\xi), \zeta) := (f^0(t,\xi,\zeta), f(t,\xi,\zeta))$ as the new vectorfield; we set $G^0(\sigma,\xi) := \sigma + g^0(\xi)$, $G^{\alpha}(\sigma,\xi) := g^{\alpha}(\xi)$ when $\alpha \in \{ 1, ...,m\}$, and we set $H^{\beta}(\sigma, \xi) := h^{\beta}(\xi)$ when $\beta \in \{ 1, ..., q\}$. We formulate the new following problem of Mayer:
\[ (\mathcal{MB}) 
\left\{
\begin{array}{cl}
{\rm Maximize} & G^0({\mathfrak X}(T))\\
{\rm subject} \; \; {\rm to} & {\mathfrak X} \in PC^1([0,T], \R \times \Omega), u \in NPC_R^0([0,T], U)\\
\null & \underline{d}{\mathfrak X}(t) = F(t, {\mathfrak X}(t), u(t)), \; {\mathfrak X}(0) = (0, \xi_0)\\
\null & \forall \alpha \in \{ 1, ..., m\}, \; \; G^{\alpha}({\mathfrak X}(T)) \geq 0 \\
\null & \forall \beta \in \{ 1,..., q\}, \; \; H^{\beta}({\mathfrak X}(T)) = 0.
\end{array}
\right.
\]
Proceeding as in the section 6 of \cite{BY}, the proofs of Theorem \ref{th21}, of Corollary \ref{cor12} and of assertion (i) of Corollary \ref{cor13} are similar to the proof of Theorem \ref{th21}, Part (I), Part (II), and Part (III) of \cite{BY}.\\
\underline{Proof of assertion (ii) of Corollary \ref{cor13}.}\\ 
First we want to prove that $\lambda_0 \neq 0$. To do that, we proceed by contradiction, we assume that $\lambda_0=0$. Using (MP.B), we obtain $p(\hat{t})\circ D_{G,3}f(\hat{t},x_0(\hat{t}),u_0(\hat{t}))=0.$
Since $D_{G,3}f(\hat{t},x_0(\hat{t}),u_0(\hat{t}))$ is onto, we have necessarily $p(\hat{t})=0$. Since we have assumed (QC,1), $(\lambda_0,p(\hat{t}))=(0,0)$ provides a contradiction after assertion (i) of Corollary \ref{cor13}. 
Hence we have proven that $\lambda_0 \neq 0$. To conclude it sufficies to divide $\lambda_0,...,\lambda_m, \mu_1,...,\mu_q$ and $p$ by $\lambda_0$.\\
\underline{Proof of assertion (iii) of Corollary \ref{cor13}.}\\
First we want to prove that $\lambda_0 \neq 0$. To do that, we proceed by contradiction, we assume that $\lambda_0=0$. Using (MP.B), we obtain $p(T)\circ D_{G,3}f(T,x_0(T),u_0(T))=0$. That is why, using (TC), we obtain $\sum_{\alpha=1}^{m} \lambda_\alpha D_Hg^\alpha(x_0(T))\circ D_{G,3}f(T,x_0(T),u_0(T)) \\+
\sum_{\beta=1}^{q} \mu_\beta D_Hh^\beta(x_0(T))\circ D_{G,3}f(T,x_0(T),u_0(T))=p(T)\circ D_{G,3}f(T,x_0(T),u_0(T))\\=0.$
From (LI), we obtain ($\forall \alpha\in \{1,...,m\}$, $\lambda_\alpha=0$) and ($\forall \beta \in\{1,...,q\}$, $\mu_\beta=0$); hence we have $((\lambda_\alpha)_{0\le \alpha\le m},(\mu_\beta)_{1\le \beta \le q})=(0,0)$; which contradicts (NN). We have proven that $\lambda_0 \neq 0$. We conclude as in the proof of (ii).\\
\underline{Proof of assertion (iv) of Corollary \ref{cor13}.}\\ 
From assertion (iii), we know that there exists $((\lambda_\alpha)_{0\le \alpha\le m},(\mu_\beta)_{1\le \beta \le q})$ with $\lambda_0=1$, and $p$ which satisfy the conclusions of Theorem \ref{th21}. \\
Let $((\lambda^1_\alpha)_{0\le \alpha\le m},(\mu^1_\beta)_{1\le \beta \le q})$ with $\lambda^1_0=1$, and $p^1$ which satisfy the conclusions of Theorem \ref{th21}. \\
Using (MP.B), we have $p^1(T)\circ D_{G,3}f(T,x_0(T),u_0(T))+D_{G,3}f^0(T,x_0(T),u_0(T))=0$ and $p(T)\circ D_{G,3}f(T,x_0(T),u_0(T))+D_{G,3}f^0(T,x_0(T),u_0(T))=0$. Using twice (TC), we obtain $\sum_{\alpha=1}^{m} (\lambda_\alpha-\lambda^1_\alpha) D_Hg^\alpha(x_0(T))\circ D_{G,3}f(T,x_0(T),u_0(T)) \\+
\sum_{\beta=1}^{q} (\mu_\beta-\mu^1_\beta) D_Hh^\beta(x_0(T))\circ D_{G,3}f(T,x_0(T),u_0(T))=0$.
The linear independence provided by (LI) implies $(\forall \alpha\in \{1,...,m\},\, \lambda_\alpha-\lambda^1_\alpha=0)$ and $(\forall \beta\in \{1,...,q\}, \, \mu_\beta-\mu^1_\beta=0)$.\\
Consequently, we have $((\lambda_\alpha)_{0\le \alpha\le m},(\mu_\beta)_{1\le \beta \le q})=((\lambda^1_\alpha)_{0\le \alpha\le m},(\mu^1_\beta)_{1\le \beta \le q})$. Using twice (TC), we obtain $p(T)=p^1(T)$. Using (AE.B) and the uniqueness of the solution of a Cauchy problem, we obtain $p=p^1$. We have proven the uniqueness. 
%%%%%%%%%%%%%%%%%%%%%%%%%%%%%%%%%%%%%%%%%%%%%%%%%%%%%%%%%%%%%%%%%%%%%%%%%%%%%%%%%%%%%%%%%%%%%%%%%%%%%%%%%%%%%%%%%%%%%%%%%%%%%%%%%%%%%%%%%%%%%%%%%%%%%%%%%%%%%%%%%%%%%%%%%%%%%%%%%%%%%%%%%%%%%%%%%%%%%%%%%%%%%%%%%%%%%%%%%%%%%%%%%%%%%%%%%%%%%%
\section{Envelope theorems}
$X$ is a Banach space, $Y$ and $Z$ are real normed spaces, $\Omega$ is a non-empty open subset of $X$, and $U$ is a non-empty open subset of $Y$, $f^0:[0,T]\times \Omega \times U \times Z \rightarrow \R$, $f:[0,T]\times \Omega \times U \times Z \rightarrow X$ $g^i: \Omega \times Z \rightarrow \R$ ($0\le i \le m$) and $h^j: \Omega \times Z \rightarrow \R$ ($1\le j \le q)$ are mappings. Let $\xi_0\in \Omega$, 
for all $\pi \in Z$, we consider the following problem of Bolza.\\
\[
({\mathcal B},\pi)
\left\{
\begin{array}{cl}
{\rm Maximize} & \int_0^T f^0(t,x(t),u(t),\pi)dt + g^0(x(T),\pi) \\
{\rm subject \;  to} & x \in PC^1([0,T], \Omega), u \in NPC_R^0([0,T], U)\\
\null & \forall t\in[0,T],\, \underline{d}x(t) = f(t,x(t), u(t),\pi), \; x(0) = \xi_0\\
\null & \forall i \in \{ 1,..., m\}, \; \; g^i(x(T),\pi) \geq 0\\
\null & \forall j \in \{ 1,..., q\}, \; \; h^j(x(T),\pi) = 0.
\end{array}\right.
\]
The Hamiltonian of Pontryagin of this problem of Bolza is $H_\pi :[0,T]\times \Omega\times U\times X^*\times \R \rightarrow \R$, defined by $H_{\pi}(t,\xi,\zeta,p, \lambda) :=  p \cdot f(t,\xi,\zeta,\pi) + \lambda f^0(t,\xi,\zeta,\pi) $ when $t\in[0,T], \xi\in \Omega$, $\zeta\in U$, $p\in X^*$ and $\lambda\in \R$. For each $\pi\in Z$, we denote by $V[\pi]$ the value of $(\mathcal{B},$ $\pi)$.
\subsection{Main results}
We fix $\pi_0\in Z$ and we consider the following list of conditions.\\
{\bf Conditions on the solutions.} 
\begin{itemize}
% (C1)
\item[{\bf (SO)}] There exists an open neighborhood $P$ of $\pi_0$ in $Z$ s.t., $\forall \pi \in P,$ there exists $(x[\pi],u[\pi]) \in PC^1([0,T],\Omega)\times NPC_{R}^0([0,T],U)$, a solution of $({\mathcal B},\pi)$. There exists $\delta\pi \in Z$ s.t. $D_G^+x[\pi_0;\delta\pi]$ and $D_G^+u[\pi_0;\delta\pi]$ exist.  
% (C16)
\end{itemize}
{\bf Conditions on the integrand of the criterion.}
\begin{itemize}
% (C3)
\item[{\bf (IC1)}]  $f^0 \in C^0([0,T] \times \Omega \times U \times P,\R)$, and, $\forall (t, \xi, \zeta, \pi) \in [0,T] \times \Omega \times U \times P$, $D_{G,2}f^0(t,\xi, \zeta,\, \pi)$ exists. Moreover, for all $\pi\in P$, for all non-empty compact $K$ s.t. $K\subset \Omega\times U$, $\sup_{(t,\xi,\zeta)\in[0,T]\times K} \|D_{G,2}f^0(t,\xi,\zeta,\pi)\| <+\infty$.
\item[{\bf (IC2)}] For all $t\in [0,T]$, $D_{H,(2,3,4)}f^0(t,x[\pi_0](t),u[\pi_0](t),\pi_0)$ exists. Moreover, for all $\pi\in P$, $D_{H,3}f^0(T,x[\pi](T),u[\pi](T),\pi)$ exists, and the function \\$[ \pi \mapsto D_{H,3}f^0(T,x[\pi](T),u[\pi](T),\pi)] \in C^0(P,Y^*).$
\item[{\bf (IC3)}] There exists $\kappa \in \mathcal{L}^1(([0,T],\mathcal{B}([0,T])),\mathfrak{m}_1;\R_+)$, there exists $\rho>0$ s.t.,\\ $\forall t\in [0,T],$ $\forall (\xi_1,\zeta_1,\pi_1),\, (\xi_2,\zeta_2,\pi_2) \in B_{\|\cdot\|_1}((x[\pi_0](t),u[\pi_0](t),\pi_0),\rho),\\ 
|f^0(t,\xi_1,\zeta_1,\pi_1)-f^0(t,\xi_2,\zeta_2,\pi_2)|\le \kappa(t)\|(\xi_1,\zeta_1,\pi_1)-(\xi_2,\zeta_2,\pi_2)\|.$
\item[{\bf (IC4)}]
For all $(t, \zeta, \pi) \in [0,T] \times U \times P$, $D_{F,2}f^0(t,x[\pi](t),\zeta,\pi)$ exists and, $\forall \pi\in P,$ $[(t,\zeta)\mapsto D_{F,2}f^0(t,x[\pi](t),\zeta,\pi)]\in C^0([0,T]\times U,X^*)$,
% (C13)
\end{itemize}
where $\mathcal{B}([0,T])$ is the Borel tribe on $[0,T]$ and $\mathfrak{m}_1$ is the canonical Borel measure on $[0,T]$.\\
Notice that (IC1) concerns the continuity and the partial G\^ateaux differentiability; (IC2) concerns the Hadamard differentiability, (IC3) concerns a partial Lipschitz condition, and (IC4) concerns the partial Fr\'echet differentiability. if, $x\in C^0(P,C^0([0,T],\Omega))$ and, for all $t\in[0,T]$, $f^0(t,\cdot,\cdot,\cdot)$ is Fr\'echet differentiable on $\Omega\times U\times P$, and if $D_{F,(2,3,4)}f^0$ is continuous on $[0,T] \times \Omega\times U\times P$ then (IC1)-(IC4) are fulfilled. In our approach we want to weaken the conditions on $f^0$.   \\
{\bf Conditions on the vector field. }
\begin{itemize}
 % (C4)
\item[{\bf (V1)}] For all $\pi\in P,\,[(t,\xi, \zeta) \mapsto f(t,\xi, \zeta,\pi)]\in C^0([0,T] \times \Omega \times U, X)$, and, for all $(t, \xi, \zeta, \pi) \in [0,T] \times \Omega \times U \times P$, $D_{G,2}f(t,\xi, \zeta,\, \pi)$ exists. Moreover, for all $\pi\in P$, for all non-empty compact $K$ s.t. $K\subset \Omega\times U$, $\sup_{(t,\xi,\zeta)\in [0,T]\times K} \|D_{G,2}f(t,\xi,\zeta,\pi)\|<+\infty$.
\item[{\bf (V2)}] For all $t\in [0,T]$, $D_{H,(2,3,4)}f(t,x[\pi_0](t),u[\pi_0](t),\pi_0)$ exist and, for all $\pi\in P$, $D_{H,3}f(T,x[\pi](T),u[\pi](T),\pi)$ exists.
\item[{\bf (V3)}] For all $(t, \zeta, \pi) \in [0,T] \times U \times P$, $D_{F,2}f(t,x[\pi](t),\zeta,\pi)$ exists and, $\forall \pi\in P,$ $[(t,\zeta)\mapsto D_{F,2}f(t,x[\pi](t),\zeta,\pi)]\in C^0([0,T]\times U,\mathcal{L}(X,X))$.
\end{itemize}
We can do a comment on (V1)-(V3) which is similar to the comment on (IC1)-(IC4) which is given just after (IC4). \\
{\bf Conditions on the terminal constraints functions and the terminal function of the criterion.}
\begin{itemize}
% (C8)
\item[{\bf (CT1)}] For all $\phi \in \{g^i : 0\le i \le m\}\cup \{h^j : 1\le j \le q\}$, $D_{H}\phi(x[\pi_0](T),\pi_0)$ exists and, $\forall \pi \in P,\, D_{H,1}\phi(x[\pi](T),\pi)$ exists. 
% (C9)
\item[{\bf (CT2)}] For all $\pi\in P$, for all $j\in\{1,...,q\}$, $h^j(\cdot,\pi)$ is continuous on a neighborhood of $x[\pi](T)$.
% (C8-bis)
\end{itemize}
{\bf Conditions on the terminal constraints functions, the terminal function of the criterion and the vector field.}
\begin{itemize}
% (C10)
\item[{\bf (CVT1)}] $(D_{H,1}g^i(x[\pi_0](T),\pi_0)\circ D_{H,3}f(T,x[\pi_0](T),u[\pi_0](T),\pi_0)$,$\\$ $D_{H,1}h^j(x[\pi_0](T),\pi_0)\circ D_{H,3}f(T,x[\pi_0](T),u[\pi_0](T),\pi_0))_{1 \le i \le m, \,1\le j \le q}$ is linearly free.
% (C11)
\item[{\bf (CVT2)}] For all $\phi \in \{g^i : 0\le i \le m\}\cup \{h^j : 1\le j \le q\}$,\\ $[\pi\mapsto D_{H,1}\phi(x[\pi](T),\pi)\circ D_{H,3}f(T,x[\pi](T),u[\pi](T),\pi)]$ belongs to \\$ C^0(P,Y^*)$.
\end{itemize}
{\bf Conditions on the control space}
\begin{itemize}
% (C12)
\item[{\bf (ESP)}] There exists $(\cdot|\cdot)$ an inner product on $Y^*$ s.t. $(\cdot|\cdot) \in C^0((Y^*,\|\cdot\|_{Y^*})^2,\R)$.
\end{itemize}
%%%%%%%%%%%%%%%%%%%%%%%%%%%%%%%%%%%%%%%%%%%%%%%%%%%%%%%%%%%%%%%%%%%%%%%%%%%%%%%%%%%%%%%%%%%%%%%%%%%%%%%%%%%%%%%%%%%%%%%%%%%%%%%%%%%%
% th 10.16
\begin{theorem}\label{th61}
Under (SO), (IC1), (IC2), (IC3), (IC4), (V1), (V2), (V3), (CT1), (CT2), (CVT1), (CVT2) and (ESP), $D_G^+V[\pi_0;\delta\pi]$ exists and 
\\$D_G^+V[\pi_0;\delta\pi]=D_{H,2}g^0(x[\pi_0](T),\pi_0)\cdot \delta\pi + \sum_{i=1}^m \lambda_i[\pi_0]D_{H,2}g^i(x[\pi_0](T), \pi_0)\cdot \delta\pi\\
 +\sum_{j=1}^q \mu_j[\pi_0]D_{H,2}h^j(x[\pi_0](T), \pi_0)\cdot  \delta\pi\\
+\int_{[0,T]} D_{H,4}f^0(t,x[\pi_0](t),u[\pi_0](t),\pi_0)\cdot \delta\pi \, d\mathfrak{m}_1(t)\\
+\int_{[0,T]} p[\pi_0](t)\cdot D_{H,4}f(t,x[\pi_0](t),u[\pi_0](t),\pi_0)\cdot \delta\pi\,  d\mathfrak{m}_1(t),
$
where $(\lambda_i[\pi_0])_{0\le i\le m}$, with $\lambda_0[\pi_0]=1$, $(\mu_j[\pi_0])_{1\le j\le q}$ (respectively $p[\pi_0]$) are the unique respectively  multipliers (respectively the unique adjoint function) of the Pontryagin Theorem applied to the solution $(x[\pi_0],u[\pi_0])$ of $(\mathcal{B},\,\pi_0)$.
\end{theorem}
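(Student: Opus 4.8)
The plan is to differentiate the value function $V[\pi]=J(x[\pi],u[\pi],\pi)$ along the ray $\pi_0+s\,\delta\pi$, exploiting the fact that for each $s$ small the process $(x[\pi_0+s\,\delta\pi],u[\pi_0+s\,\delta\pi])$ is a solution of $(\mathcal B,\pi_0+s\,\delta\pi)$, and that by (SO) the right G\^ateaux variations $D_G^+x[\pi_0;\delta\pi]$ and $D_G^+u[\pi_0;\delta\pi]$ exist. First I would reduce the problem to differentiating the three terms of $V$: the running cost integral $\int_0^T f^0(t,x[\pi](t),u[\pi](t),\pi)\,dt$, the terminal cost $g^0(x[\pi](T),\pi)$, and then accounting for the constraints through the Pontryagin multipliers obtained from Theorem~\ref{th21}. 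Corollary~\ref{cor13}(iv) is what guarantees those multipliers and the adjoint function are \emph{unique} with $\lambda_0=1$, which is essential for the formula to make sense; its hypotheses are secured here by (CVT1) and (V2), which encode condition (LI) at $\pi_0$.

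The key steps, in order, would be: (1) establish a chain-rule / differentiation-under-the-integral result for the nonlinear integral functional $\pi\mapsto\int_0^T f^0(t,x[\pi](t),u[\pi](t),\pi)\,dt$ at $\pi_0$ in the direction $\delta\pi$ — this is the ``new result on the differentiability of nonlinear integral functionals'' advertised in the introduction, and it is where (IC1)--(IC4), especially the Lipschitz domination (IC3) with $\kappa\in\mathcal L^1$, and (ESP), are used to justify passing $D_G^+$ through $\int_{[0,T]}$ via a dominated-convergence argument, yielding $\int_{[0,T]} \big(D_{H,2}f^0\cdot D_G^+x[\pi_0;\delta\pi]+D_{H,3}f^0\cdot D_G^+u[\pi_0;\delta\pi]+D_{H,4}f^0\cdot\delta\pi\big)\,d\mathfrak m_1$; (2) similarly differentiate $\pi\mapsto g^0(x[\pi](T),\pi)$ using (CT1) and the chain rule for Hadamard differentials to get $D_{H,1}g^0\cdot D_G^+x[\pi_0;\delta\pi](T)+D_{H,2}g^0\cdot\delta\pi$; (3) bring in the adjoint equation (AE.B) and transversality condition (TC) of Theorem~\ref{th21} to rewrite $D_{H,1}g^0\cdot D_G^+x[\pi_0;\delta\pi](T)$ and the $D_{H,2}f^0\cdot D_G^+x$ term: integrate $\frac{d}{dt}\big(p[\pi_0](t)\cdot w(t)\big)$ where $w:=D_G^+x[\pi_0;\delta\pi]$, which by differentiating the state equation $\underline d x[\pi]=f(\cdot,x[\pi],u[\pi],\pi)$ satisfies $\underline d w=D_{F,2}f\cdot w+D_{H,3}f\cdot D_G^+u[\pi_0;\delta\pi]+D_{H,4}f\cdot\delta\pi$; (4) the maximum principle (MP.B) plus (CVT1) forces the coefficient of $D_G^+u[\pi_0;\delta\pi]$ to vanish in the resulting expression (this is the standard Pontryagin cancellation, here in its G\^ateaux form), and the boundary terms at $t=T$ combine with (TC) and the complementary-slackness/constraint-differentiation identities $\lambda_i[\pi_0]\,D_{H,1}g^i\cdot w(T)=-\lambda_i[\pi_0]\,D_{H,2}g^i\cdot\delta\pi$ (from $g^i(x[\pi](T),\pi)\ge0$ being active, differentiated along the ray), and likewise for $h^j$, to produce exactly the claimed five-term formula.

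The main obstacle I expect is step (1) together with step (4): one must verify that $s\mapsto x[\pi_0+s\,\delta\pi]$ and $s\mapsto u[\pi_0+s\,\delta\pi]$ are sufficiently regular (right-differentiable at $s=0$ with the integrands controlled uniformly) so that the limit defining $D_G^+V$ can be interchanged with the integral — this is precisely why (IC3) posits an $\mathcal L^1$ Lipschitz constant rather than continuity, and why (ESP) is needed to handle the $Y^*$-valued terms via an inner-product structure — and then to carry out the delicate cancellation of the control-variation terms, where one cannot differentiate $u[\pi]$ in any classical sense and must instead argue that $D_{H,3}f^0\cdot D_G^+u+p\cdot D_{H,3}f\cdot D_G^+u$ is annihilated by combining (MP.B) (which gives a first-order stationarity in $\zeta$ at $u_0(t)$ when $U$ is open, via condition (LI)/(CVT1) transported along $t$) with the transversality relation. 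The remaining algebra — collecting the $\delta\pi$-linear terms $D_{H,2}g^0$, $\lambda_i D_{H,2}g^i$, $\mu_j D_{H,2}h^j$, $\int D_{H,4}f^0\cdot\delta\pi$, and $\int p\cdot D_{H,4}f\cdot\delta\pi$ — is then bookkeeping. Throughout, Lemmas~\ref{lem11}--\ref{lem113} supply the boundedness-to-Lipschitz passages that let us dispense with continuity of the various partial differentials.
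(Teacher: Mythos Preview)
Your overall architecture is correct and matches the paper's proof: differentiate the integral via a dominated-convergence lemma (the paper's Lemma~\ref{lem55}, using (IC1)--(IC3)), apply the chain rule for Hadamard differentials to $g^0$, differentiate the state equation to get the variational equation for $w:=D_G^+x[\pi_0;\delta\pi]$, integrate $\underline d\big(p[\pi_0](t)\cdot w(t)\big)$ using (AE.B), and kill the $D_G^+u$-terms via the first-order condition from (MP.B) (which is legitimate here because $U$ is an open subset of $Y$).

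There is, however, a genuine gap in your step~(4). You assert the identity $\lambda_i[\pi_0]\,D_{H,1}g^i\cdot w(T)=-\lambda_i[\pi_0]\,D_{H,2}g^i\cdot\delta\pi$ ``from $g^i(x[\pi](T),\pi)\ge0$ being active, differentiated along the ray''. But activity at $\pi_0$ (i.e.\ $g^i=0$ there, which follows from $\lambda_i[\pi_0]>0$ and slackness) does \emph{not} by itself give $g^i(x[\pi](T),\pi)=0$ for $\pi$ near $\pi_0$; the inequality constraint could go slack immediately. What the paper does is first prove that $\pi\mapsto\lambda_i[\pi]$ is \emph{continuous} on a neighborhood $Q$ of $\pi_0$ (Lemma~\ref{lem61}); then $\lambda_i[\pi_0]>0$ forces $\lambda_i[\pi]>0$ on a smaller neighborhood, and slackness at each such $\pi$ gives $g^i(x[\pi](T),\pi)=0$ there, which one can now differentiate. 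This continuity-of-multipliers step is exactly where (CVT2) and (ESP) enter: (CVT2) extends the linear-independence condition (LI) to all $\pi\in Q$ (so that the multipliers are well-defined and unique there, via Corollary~\ref{cor13}(iv)), and (ESP) supplies the inner product on $Y^*$ needed to invoke a coordinate-continuity lemma (Lemma~4.3 of \cite{BY2}) that reads off $\lambda_i[\pi],\mu_j[\pi]$ continuously from the transversality relation. You have misplaced the role of (ESP): it is not used to pass $D_G^+$ through the integral (that is handled by (IC3) alone), and you do not invoke (CVT2) at all. Without this continuity argument, the slackness differentiation is unjustified and the proof does not close.
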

In order to provide a result on the G\^ateaux differentiability of $V$ at $\pi_0$, we introduce the following strengthened conditions.
\begin{itemize}
\item[{\bf (SO-bis)}] For all $\delta\pi \in Z$, $D_G^+x[\pi_0;\delta\pi]$ and $D_G^+u[\pi_0;\delta\pi]$ exists.
\item[{\bf (V4)}] There exists $\underline{c}\in  \mathcal{L}^1(([0,T],\mathcal{B}([0,T])),\mathfrak{m}_1;\R_+)$ s.t.  \\
$ \forall t\in [0,T],\, \|D_{H,4}f(t,x[\pi_0](t),u[\pi_0](t),\pi_0)\| \le \underline{c}(t).$
\end{itemize}
\begin{corollary}\label{cor62}
Under the assumptions of Theorem \ref{th61}, assuming in addition (SO-bis) and (V4), $V$ is G\^ateaux differentiable at $\pi_0$ and the formula of $D_GV[\pi_0]$ is given by the formula of Theorem \ref{th61}.
\end{corollary}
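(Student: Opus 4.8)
The plan is to deduce Corollary~\ref{cor62} from Theorem~\ref{th61} by applying the latter in every direction and then symmetrising in $\delta\pi$. In the hypotheses of Theorem~\ref{th61}, the only clause that mentions the direction $\delta\pi$ is the last part of (SO), namely the existence of $D_G^+x[\pi_0;\delta\pi]$ and $D_G^+u[\pi_0;\delta\pi]$; the conditions (IC1)--(IC4), (V1)--(V3), (CT1), (CT2), (CVT1), (CVT2) and (ESP) are direction-free, and the openness of $P$ makes $\pi_0+s\delta\pi$ admissible for $|s|$ small. Hence, under (SO-bis), Theorem~\ref{th61} applies to \emph{each} $\delta\pi\in Z$, and in the same way to $-\delta\pi$ for each $\delta\pi$. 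Writing $\Lambda(\delta\pi)$ for the right-hand side of the formula of Theorem~\ref{th61}, we obtain: for every $\delta\pi\in Z$, $D_G^+V[\pi_0;\delta\pi]$ exists and equals $\Lambda(\delta\pi)$. Note that the multipliers $(\lambda_i[\pi_0])_{0\le i\le m}$, $(\mu_j[\pi_0])_{1\le j\le q}$ and the adjoint function $p[\pi_0]$ entering $\Lambda$ are the unique ones associated with $(x[\pi_0],u[\pi_0])$, as in the statement of Theorem~\ref{th61}; in particular they do not depend on $\delta\pi$, so $\delta\pi\mapsto\Lambda(\delta\pi)$ is built from $\delta\pi$ by linear operations only.

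Next I would check that $\Lambda\in Z^*$. Linearity is clear term by term: the terminal contributions are the bounded linear functionals $D_{H,2}g^i(x[\pi_0](T),\pi_0)$ and $D_{H,2}h^j(x[\pi_0](T),\pi_0)$ evaluated at $\delta\pi$, and the two integrands are linear in $\delta\pi$ because $D_{H,4}f^0$ and $D_{H,4}f$ are linear in their last slot and $p[\pi_0]$ is fixed. For continuity, the finitely many terminal terms are clearly bounded in $\delta\pi$; for the first integral I would invoke (IC3), which, restricted to the last variable, yields $\|D_{H,4}f^0(t,x[\pi_0](t),u[\pi_0](t),\pi_0)\|\le\kappa(t)$ with $\kappa\in\mathcal{L}^1$, and for the second integral I would use (V4), which gives $\|D_{H,4}f(t,x[\pi_0](t),u[\pi_0](t),\pi_0)\|\le\underline{c}(t)$ with $\underline{c}\in\mathcal{L}^1$, together with the boundedness of $p[\pi_0]$ on the compact $[0,T]$ (it is continuous, since $p[\pi_0]\in PC^1([0,T],X^*)$). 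This produces an estimate $|\Lambda(\delta\pi)|\le C\,\|\delta\pi\|$ with $C$ depending only on $\pi_0$, so $\Lambda\in Z^*$.

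Finally I would upgrade the family of right variations to a genuine G\^ateaux differential. Since $\Lambda$ is linear, the first step applied to $-\delta\pi$ gives $D_G^+V[\pi_0;-\delta\pi]=\Lambda(-\delta\pi)=-\Lambda(\delta\pi)$; on the other hand, the change of parameter $s\mapsto-s$ shows $D_G^+V[\pi_0;-\delta\pi]=-\lim_{t\to0^-}\frac1t\bigl(V[\pi_0+t\delta\pi]-V[\pi_0]\bigr)$, so the left directional derivative of $V$ at $\pi_0$ in the direction $\delta\pi$ also equals $\Lambda(\delta\pi)$. Therefore, for every $\delta\pi\in Z$, the two-sided limit $\lim_{t\to0}\frac1t\bigl(V[\pi_0+t\delta\pi]-V[\pi_0]\bigr)$ exists and equals $\Lambda(\delta\pi)$; since $\Lambda\in Z^*$, this is exactly the assertion that $V$ is G\^ateaux differentiable at $\pi_0$ with $D_GV[\pi_0]=\Lambda$, i.e. with the differential given by the formula of Theorem~\ref{th61}. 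The argument is essentially bookkeeping once Theorem~\ref{th61} is available; the one substantive point — and the reason (V4) is imposed here but not in Theorem~\ref{th61} — is the continuity of $\Lambda$, i.e. that the two integral terms are bounded in $\delta\pi$ \emph{uniformly}, which is what allows the collection of right directional derivatives to assemble into an element of $Z^*$ rather than a merely positively homogeneous variation; the sign change then closes the gap between the right variation and the two-sided differential.
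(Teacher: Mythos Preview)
Your proof is correct and follows essentially the same approach as the paper: apply Theorem~\ref{th61} in every direction via (SO-bis), then use (IC3) and (V4) together with the boundedness of $p[\pi_0]$ to show that the resulting map $\delta\pi\mapsto D_G^+V[\pi_0;\delta\pi]$ belongs to $Z^*$. The only difference is that you spell out the passage from the right variation to the two-sided G\^ateaux differential via the substitution $\delta\pi\mapsto-\delta\pi$, whereas the paper simply concludes ``$[\delta\pi\mapsto D_G^+V[\pi_0;\delta\pi]]\in Z^*$, therefore $V$ is G\^ateaux differentiable at $\pi_0$''; your extra paragraph is a harmless (and arguably welcome) elaboration of this implicit step.
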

In order to provide a result on the continuously Fr\'echet differentiability of $V$, we introduce the following strengthened conditions
\begin{itemize}
\item[{\bf (SO-ter)}] The functions $[\pi\mapsto x[\pi]]$ and $[\pi\mapsto u[\pi]]$ are continuous at $\pi_0$ and, for all $\pi \in P$, for all $\delta\pi \in Z$, $D_G^+x[\pi;\delta\pi]$ and $D_G^+u[\pi;\delta\pi]$ exist.
\item[{\bf (IC5)}] For all $\pi\in P$,\, for all $t\in [0,T],$ $D_{H,(2,3,4)}f^0(t,x[\pi](t),u[\pi](t),\pi)$ and, for all $t\in [0,T],\, [\pi \mapsto D_{H,(2,4)}f^0(t,x[\pi](t),u[\pi](t),\pi)]\in C^0(P,(X\times Z)^*)$.
\item[{\bf (IC6)}]For all $\pi\in P$, $[t\mapsto D_{H,4}f^0(t,x[\pi](t),u[\pi](t),\pi)]$ belongs to \\$\mathcal{L}^0(([0,T],\mathcal{B}([0,T])),(Z^*,\mathcal{B}(Z^*)))$. 
\item[{\bf (V5)}] For all $\pi\in P$,\, for all $t\in [0,T]$, $D_{H,(2,3,4)}f(t,x[\pi](t),u[\pi](t),\pi)$ exist and for all $t\in[0,T]$, $[\pi \mapsto D_{H,(2,4)}f(t,x[\pi](t),u[\pi](t),\pi)]\in C^0(P,\mathcal{L}(X\times Z,X)).$
% (C17) et (C18)
\item[{\bf (V6)}] For all $\pi\in P$, $[t\mapsto D_{H,4}f(t,x[\pi](t),u[\pi](t),\pi)]$ belongs to \\ $\mathcal{L}^0(([0,T],\mathcal{B}([0,T])),(\mathcal{L}(Z,X),\mathcal{B}(\mathcal{L}(Z,X)))$ and there exists \\$c\in  \mathcal{L}^1(([0,T],\mathcal{B}([0,T])),\mathfrak{m}_1;\R_+)$ s.t. $ \forall t\in [0,T],\, \forall \pi\in P,\,\\
 \|D_{H,(2,4)}f(t,x[\pi](t),u[\pi](t),\pi)\| \le c(t)$.
\item[{\bf (CT3)}] For all $\phi \in \{g^i : 0\le i \le m\}\cup \{h^j : 1\le j \le q\}$, for all $\pi\in P$, $\phi$ is Hadamard differentiable at $(x[\pi](T),\pi)$ and, $[\pi \mapsto D_H\phi(x[\pi](T),\pi)]\in C^0(P, (X\times Z)^*).$
\end{itemize}
$\mathcal{L}^0$ denotes the space of all measurable functions.
\begin{corollary}\label{cor63}
Under the assumptions of Corollary \ref{cor62}, if, in addition (SO-ter), (IC5), (IC6), (V5), (V6) and (CT3) are fulfilled, then $V$ is continuously Fr\'echet differentiable on $\mathfrak{W}$ which is an open neighborhood of $\pi_0$ and for all $\pi\in \mathfrak{W}$, for all $\delta \pi\in Z$,  
$
D_FV[\pi]\cdot \delta \pi=D_{H,2}g^0(x[\pi](T),\pi)\cdot \delta \pi + \sum_{i=1}^m \lambda_i[\pi]D_{H,2}g^i(x[\pi](T), \pi)\cdot \delta \pi\\
+\sum_{j=1}^q \mu_j[\pi]D_{H,2}h^j(x[\pi](T), \pi)\cdot \delta \pi\\
+\int_{[0,T]} D_{H,4}f^0(t,x[\pi](t),u[\pi](t),\pi) \cdot \delta \pi \, d\mathfrak{m}_1(t)\\
+\int_{[0,T]} p[\pi](t)\cdot D_{H,4}f(t,x[\pi](t),u[\pi](t),\pi) \cdot \delta \pi\, d\mathfrak{m}_1(t),
$
where $(\lambda_i[\pi])_{0\le i\le m}$, with \\
$\lambda_0[\pi]=1$, $(\mu_j[\pi])_{1\le j\le q}$ (respectively $p[\pi]$) are the unique respectively multipliers (respectively the unique adjoint function) of the Pontryagin Theorem applied to the solution $(x[\pi],u[\pi])$ of $(\mathcal{B},\,\pi)$.
\end{corollary}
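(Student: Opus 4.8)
The plan is to upgrade the Gâteaux differentiability of Corollary \ref{cor62}, which holds at the single point $\pi_0$, to continuous Fréchet differentiability on a neighborhood $\mathfrak{W}$ of $\pi_0$. The strategy relies on the classical fact that a mapping which is Gâteaux differentiable at every point of an open set and whose Gâteaux differential is continuous there is automatically of class $C^1$ (i.e. Fréchet continuously differentiable); see e.g. the standard references cited earlier. So the work splits into three tasks: (a) show that Corollary \ref{cor62} can be applied not just at $\pi_0$ but at every $\pi$ in a suitable open neighborhood $\mathfrak{W}\subset P$ of $\pi_0$, yielding $D_GV[\pi]$ with the announced formula; (b) check that $[\pi\mapsto D_GV[\pi]]$ is continuous on $\mathfrak{W}$; (c) conclude by the cited characterization of $C^1$ maps.

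For task (a): the extra hypotheses (SO-ter), (IC5), (IC6), (V5), (V6), (CT3) are precisely the ``uniform in $\pi$'' versions of the hypotheses of Corollary \ref{cor62}. First I would use (SO-ter) together with the continuity at $\pi_0$ of $[\pi\mapsto x[\pi]]$ and $[\pi\mapsto u[\pi]]$ to find an open neighborhood $\mathfrak{W}$ of $\pi_0$, contained in $P$, on which all the pointwise hypotheses of Theorem \ref{th61}/Corollary \ref{cor62} remain valid with $\pi$ in place of $\pi_0$; here one has to be a little careful because some conditions (the Lipschitz condition (IC3), the open neighborhoods in (SO), the linear-freeness (CVT1)) are stated only at $\pi_0$, so one invokes the continuity statements (IC5), (V5), (CT3), (CVT2) and an openness argument to propagate them to nearby $\pi$. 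In particular (CVT1) propagates to a neighborhood because linear independence is an open condition and the relevant compositions depend continuously on $\pi$ by (CVT2); this is the point where (ESP) and the continuity of $[\pi\mapsto D_{H,3}f(T,x[\pi](T),u[\pi](T),\pi)]$-type data are used to keep the qualification conditions and hence the uniqueness of the multipliers $(\lambda_i[\pi]),(\mu_j[\pi])$ and of the adjoint $p[\pi]$. Then, for each $\pi\in\mathfrak{W}$, Corollary \ref{cor62} gives $D_GV[\pi]\cdot\delta\pi$ equal to the displayed formula.

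For task (b): I would prove continuity of each of the five terms of the formula as a function of $\pi$, valued in $Z^*$. The two boundary terms in $g^0,g^i,h^j$ are continuous by (CT3) (and by continuity of $\pi\mapsto\lambda_i[\pi]$, $\pi\mapsto\mu_j[\pi]$, which itself follows from the uniqueness and the qualification conditions propagated in (a), via a standard closed-graph / compactness argument on the unit sphere of multipliers as in the proof of Theorem \ref{th22}). The two integral terms are handled by a dominated-convergence argument: (IC5)--(IC6) give measurability and pointwise continuity in $\pi$ of the integrand $D_{H,4}f^0(t,x[\pi](t),u[\pi](t),\pi)$, and the domination needed to pass to the limit $\pi\to\pi'$ inside $\int_{[0,T]}$ is supplied by (V6) (the bound $c(t)$) together with (V4) for the $f$-term — here one also needs continuity of $\pi\mapsto p[\pi]$ in, say, the sup-norm on $PC^1([0,T],X^*)$, which comes from continuous dependence of the solution of the adjoint Cauchy equation (AE.B) on its data, the data being continuous in $\pi$ by (V5) and (CT3). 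The main obstacle I expect is exactly this continuity of $\pi\mapsto p[\pi]$ and of $\pi\mapsto(\lambda_i[\pi],\mu_j[\pi])$: one must combine the uniqueness coming from the qualification condition (CVT1)/(LI)-type hypothesis with a continuous-dependence estimate for linear Cauchy problems (Gronwall), and verify that the transversality condition (TC), being a linear system in the multipliers that is invertible by (CVT1), solves for $(\lambda_i[\pi],\mu_j[\pi])$ continuously in $\pi$.

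Finally, task (c) is immediate: $V$ is Gâteaux differentiable at every point of the open set $\mathfrak{W}$ and $[\pi\mapsto D_GV[\pi]]$ is continuous from $\mathfrak{W}$ into $Z^*=\mathcal{L}(Z,\R)$, hence $V$ is Fréchet differentiable on $\mathfrak{W}$ with $D_FV[\pi]=D_GV[\pi]$ and $D_FV$ is continuous, i.e. $V\in C^1(\mathfrak{W},\R)$, with the formula for $D_FV[\pi]\cdot\delta\pi$ as stated. $\square$
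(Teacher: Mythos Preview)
Your plan is correct and mirrors the paper's own argument: construct $\mathfrak{W}$ by propagating (IC3) via the continuity of $\pi\mapsto(x[\pi],u[\pi])$ from (SO-ter) and (CVT1) via (CVT2), apply Corollary~\ref{cor62} at each $\pi\in\mathfrak{W}$, then establish continuity of $\pi\mapsto D_GV[\pi]$ term by term (multipliers by solving the linear system coming from {\sf(TC)}+{\sf(MP)} under (CVT1), adjoint $p[\pi]$ by Gronwall on {\sf(AE)}, integral terms by dominated convergence under (IC5)--(IC6), (V5)--(V6)), and conclude $C^1$ Fr\'echet via the standard G\^ateaux-plus-continuous-differential criterion. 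The only cosmetic difference is that the paper has already obtained the continuity of $\pi\mapsto(\lambda_i[\pi],\mu_j[\pi])$ as Lemma~\ref{lem61} inside the proof of Theorem~\ref{th61} (via an explicit coordinate-in-a-moving-basis argument rather than a closed-graph/compactness argument), so in the proof of Corollary~\ref{cor63} that continuity is simply invoked.
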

\subsection{Proof of Theorem \ref{th61}}
We begin to establish a generalization of Lemma 5.2 in \cite{BY2}.   
\begin{lemma}\label{lem55} Let $E$ be a real normed vector space, $G$ be a non-empty open subset of $E$, ${\mathfrak f}:[0,T]\times G \rightarrow \R$ be a function and $\mathfrak{x}_0\in NPC^0_R([0,T],G )$. We consider the following conditions:
\begin{itemize}
\item[{\bf (i)}] ${\mathfrak f}\in  PCP^0([0,T]\times G, \R)$.
\item[{\bf (ii)}] There exists $\rho_1>0$ and there exists $\zeta\in \mathcal{L}^1(([0,T],\mathcal{B}([0,T])),\mathfrak{m}_1;\R_+)$ s.t., $\forall t \in [0,T], \forall {\mathfrak u}_1, {\mathfrak u}_2 \in B(\mathfrak{x}_0(t),\rho_1)$, $|{\mathfrak f}(t,{\mathfrak u}_1)-{\mathfrak f}(t,{\mathfrak u}_2)| \le \zeta(t)\|{\mathfrak u}_1-{\mathfrak u}_2\|.$
\item[{\bf (iii)}] For all $t\in [0,T]$, $D_{H,2}{\mathfrak f}(t,\mathfrak{x}_0(t))$ exists.
\end{itemize}
We consider the functional $F:NPC^0_R([0,T],G)\rightarrow \R$ defined by, \\
for all ${\mathfrak x}\in NPC^0_R([0,T],G)$, $F({\mathfrak x}):=\int_0^T {\mathfrak f}(t,{\mathfrak x}(t))\, dt$. The following assertions hold. 
\begin{itemize}
\item[{\bf (a)}] $NPC^0_R([0,T],G)$ is open in $NPC^0_R([0,T],E)$.  
\item[{\bf (b)}] Under (i)-(ii), $F$ is well defined and Lipschitzean on $B_{\|\cdot\|_{\infty}}(\mathfrak{x}_0,\rho_1)$.  
\item[{\bf (c)}] Under (i)-(iii), $F$ is Hadamard differentiable at $\mathfrak{x}_0$ and \\ for all $\mathfrak{h}\in NPC^0_R([0,T],E)$,\\ $[t\mapsto D_{H,2}{\mathfrak f}(t,\mathfrak{x}_0(t))\cdot \mathfrak{h}(t)]\in \mathcal{L}^1(([0,T],\mathcal{B}([0,T])),\mathfrak{m}_1;\R)$ and for all $\mathfrak{h}\in NPC^0_R([0,T],E),\; D_HF(\mathfrak{x}_0)\cdot \mathfrak{h}=\int_{[0,T]} D_{H,2}{\mathfrak f}(t,\mathfrak{x}_0(t))\cdot \mathfrak{h}(t)\; d\mathfrak{m}_1(t).$
\end{itemize}
\end{lemma}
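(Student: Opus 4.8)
\emph{Plan of proof.} Throughout write $E_0:=NPC^0_R([0,T],E)$ and $G_0:=NPC^0_R([0,T],G)$, both normed by $\|\cdot\|_\infty$, and recall (this is the $PCP^0$-analogue of Lemma \ref{lem113}, proved by refining a subdivision adapted to $\mathfrak{f}$ by one adapted to $\mathfrak{x}$ and using right-continuity of $\mathfrak{x}$ together with the existence of the left limits in the definition of $PCP^0$) that composing a mapping in $PCP^0([0,T]\times G,\R)$ with an element of $G_0$ yields an element of $PC^0([0,T],\R)$, which is in particular bounded, Borel measurable and Riemann integrable. For \textbf{(a)} I would fix $\mathfrak{x}\in G_0$: taking a subdivision adapted to $\mathfrak{x}$, on each closed piece $\mathfrak{x}$ extends continuously by its one-sided limits (which lie in $G$), so $\mathfrak{x}([0,T])$ is contained in a compact set $K\subset G$. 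Put $\rho:=\mathrm{dist}(K,E\setminus G)>0$ ($\rho:=+\infty$ if $G=E$); if $\mathfrak{y}\in E_0$ and $\|\mathfrak{y}-\mathfrak{x}\|_\infty<\rho$ then $\mathfrak{y}(t)\in G$ for every $t$, hence $\mathfrak{y}\in G_0$. Thus $B_{\|\cdot\|_\infty}(\mathfrak{x},\rho)\subset G_0$, which is (a); in particular we may assume $B_{\|\cdot\|_\infty}(\mathfrak{x}_0,\rho_1)\subset G_0$.

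For \textbf{(b)}, by the composition property recalled above, $t\mapsto\mathfrak{f}(t,\mathfrak{x}(t))$ is integrable for every $\mathfrak{x}\in G_0$, so $F$ is well defined on $G_0$, hence on $B_{\|\cdot\|_\infty}(\mathfrak{x}_0,\rho_1)$. If $\mathfrak{x}_1,\mathfrak{x}_2\in B_{\|\cdot\|_\infty}(\mathfrak{x}_0,\rho_1)$, then $\mathfrak{x}_1(t),\mathfrak{x}_2(t)\in B(\mathfrak{x}_0(t),\rho_1)$ for every $t$, so (ii) gives $|\mathfrak{f}(t,\mathfrak{x}_1(t))-\mathfrak{f}(t,\mathfrak{x}_2(t))|\le\zeta(t)\|\mathfrak{x}_1-\mathfrak{x}_2\|_\infty$; integrating over $[0,T]$ and using $\zeta\in\mathcal{L}^1$ yields $|F(\mathfrak{x}_1)-F(\mathfrak{x}_2)|\le\big(\int_{[0,T]}\zeta\,d\mathfrak{m}_1\big)\|\mathfrak{x}_1-\mathfrak{x}_2\|_\infty$, i.e. $F$ is Lipschitzean on that ball.

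For \textbf{(c)} I would first check that $L\cdot\mathfrak{h}:=\int_{[0,T]}D_{H,2}\mathfrak{f}(t,\mathfrak{x}_0(t))\cdot\mathfrak{h}(t)\,d\mathfrak{m}_1(t)$ is a well-defined continuous linear form on $E_0$. Fix $\mathfrak{h}\in E_0$; for $n$ large $\mathfrak{x}_0+\frac1n\mathfrak{h}\in G_0$, so $t\mapsto n\big(\mathfrak{f}(t,\mathfrak{x}_0(t)+\frac1n\mathfrak{h}(t))-\mathfrak{f}(t,\mathfrak{x}_0(t))\big)$ is Borel measurable by the composition property; by (iii) this converges pointwise as $n\to\infty$ to $t\mapsto D_{H,2}\mathfrak{f}(t,\mathfrak{x}_0(t))\cdot\mathfrak{h}(t)$, which is therefore Borel measurable, and (ii) majorizes each of these functions in absolute value by $\zeta(t)\|\mathfrak{h}\|_\infty\in\mathcal{L}^1$. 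Hence the integrand defining $L\cdot\mathfrak{h}$ lies in $\mathcal{L}^1$ and $|L\cdot\mathfrak{h}|\le\big(\int_{[0,T]}\zeta\,d\mathfrak{m}_1\big)\|\mathfrak{h}\|_\infty$; linearity in $\mathfrak{h}$ is immediate from the linearity of each $D_{H,2}\mathfrak{f}(t,\mathfrak{x}_0(t))$, so $L$ is continuous linear.

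Then I would establish the Hadamard differential itself: let $\mathfrak{h}\in E_0$, let $(s_n)$ be nonzero reals with $s_n\to0$, and let $\mathfrak{h}_n\in E_0$ with $\|\mathfrak{h}_n-\mathfrak{h}\|_\infty\to0$; for $n$ large $\mathfrak{x}_0+s_n\mathfrak{h}_n\in G_0$ and $|s_n|\sup_k\|\mathfrak{h}_k\|_\infty<\rho_1$, so set $g_n(t):=\big(\mathfrak{f}(t,\mathfrak{x}_0(t)+s_n\mathfrak{h}_n(t))-\mathfrak{f}(t,\mathfrak{x}_0(t))\big)/s_n$, a Borel measurable function. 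Since $\mathfrak{h}_n(t)\to\mathfrak{h}(t)$ for each $t$, the Hadamard differentiability of $\mathfrak{f}(t,\cdot)$ at $\mathfrak{x}_0(t)$ (condition (iii)) gives $g_n(t)\to D_{H,2}\mathfrak{f}(t,\mathfrak{x}_0(t))\cdot\mathfrak{h}(t)$ pointwise, while (ii) gives $|g_n(t)|\le\zeta(t)\|\mathfrak{h}_n(t)\|\le\zeta(t)\sup_k\|\mathfrak{h}_k\|_\infty\in\mathcal{L}^1$ (the supremum being finite as $\mathfrak{h}_n\to\mathfrak{h}$); the dominated convergence theorem then yields $\big(F(\mathfrak{x}_0+s_n\mathfrak{h}_n)-F(\mathfrak{x}_0)\big)/s_n=\int_{[0,T]}g_n\,d\mathfrak{m}_1\to L\cdot\mathfrak{h}$. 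As this holds along every such pair of sequences and $L$ is continuous linear, $F$ is Hadamard differentiable at $\mathfrak{x}_0$ with $D_HF(\mathfrak{x}_0)=L$, which is the asserted formula. The only genuinely delicate point is the measurability of $t\mapsto D_{H,2}\mathfrak{f}(t,\mathfrak{x}_0(t))$ in part (c): no hypothesis gives any regularity of this map in $t$, so it must be obtained indirectly as a pointwise limit of the measurable difference quotients, whose measurability in turn rests on the composition property of $PCP^0$-mappings with piecewise continuous functions; once that is in place, the rest is two routine applications of dominated convergence with the fixed $\mathcal{L}^1$-majorant $\zeta$.
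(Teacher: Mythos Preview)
Your proof is correct. Parts \textbf{(a)} and \textbf{(b)} are essentially identical to the paper's: compactness of the closure of the range of $\mathfrak{x}$, positive distance to $E\setminus G$, then the Lipschitz bound from (ii) integrated against $\zeta$.

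In part \textbf{(c)} you take a genuinely different route from the paper. The paper first proves only G\^ateaux differentiability (fixed direction $\mathfrak{h}$, one sequence $\theta_n\downarrow 0$, dominated convergence), then checks linearity and continuity of $D_G^+F(\mathfrak{x}_0;\cdot)$, and finally \emph{upgrades} to Hadamard differentiability by invoking the general fact (Flett, p.~259) that a Lipschitz function which is G\^ateaux differentiable at a point is Hadamard differentiable there --- so part (b) is doing real work in the paper's argument for (c). You instead verify the sequential characterisation of Hadamard differentiability directly, with varying directions $\mathfrak{h}_n\to\mathfrak{h}$ and signed $s_n\to 0$, using that (iii) gives \emph{Hadamard} (not merely G\^ateaux) differentiability of $\mathfrak{f}(t,\cdot)$ at $\mathfrak{x}_0(t)$ to obtain pointwise convergence of the quotients $g_n(t)$. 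Your approach is more self-contained (no external citation needed) and actually exploits the full strength of hypothesis (iii), at the cost of having to track the uniform majorant $\sup_k\|\mathfrak{h}_k\|_\infty$; the paper's approach is cleaner in that it recycles (b) and only needs the G\^ateaux part of (iii) for the limit step, pushing the Hadamard upgrade into a black-box lemma. Both arguments share the delicate measurability point you correctly flag: $t\mapsto D_{H,2}\mathfrak{f}(t,\mathfrak{x}_0(t))\cdot\mathfrak{h}(t)$ is obtained as a pointwise limit of regulated (hence Borel) difference quotients, and this is exactly how the paper handles it as well.
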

\begin{proof}
{\bf (a)} Let $\mathfrak{x}\in NPC_R^0([0,T],G)$; we have the closure $cl(\mathfrak{x}([0,T])) \subset G$. If $(\tau_i)_{0\le i\le \mathfrak{n}+1}$ is the list of discontinuity points of $\mathfrak{x}$, when $i\in\{0,...,\mathfrak{n}\}$, we define $\mathfrak{x}_i:[\tau_i,\tau_{i+1}]\rightarrow G$ by setting $\mathfrak{x}_i(t):=\mathfrak{x}(t)$ if $t\in[\tau_i,\tau_{i+1}[$ and $\mathfrak{x}_i(\tau_{i+1}):=\mathfrak{x}(\tau_{i+1}-)$. Hence, we have $\mathfrak{x}_i\in C^0([\tau_i,\tau_{i+1}],G)$, and then $\mathfrak{x}_i([\tau_i,\tau_{i+1}])$ is compact and moreover we have $cl(\mathfrak{x}([0,T])):=\bigcup_{0\le i\le \mathfrak{n}} \mathfrak{x}_i([\tau_i,\tau_{i+1}])$ which is compact as a finite union of compacts.\\
Using the continuity of the function $[u\mapsto d(u,E\setminus G):=\inf\{ \|u-v\| :v\in E\setminus G \}$], the closedness of $E\setminus G$ and the Optimization theorem of Weierstrass setting $\alpha:=\inf\{ d(u,E\setminus G): u\in cl(\mathfrak{x}([0,T]))\}$, we have $\alpha>0$, and then we easily verify that $B_{\|\cdot\|_\infty}(\mathfrak{x},\frac{\alpha}{2})\subset NPC_R^0([0,T],G)$; and so (a) is proven.\\
{\bf (b)} When $\mathfrak{x}\in NPC_R^0([0,T])$, we see that $[t\mapsto \mathfrak{f}(t,\mathfrak{x}(t))]$ is regulated and consequently, it is Riemann integrable on $[0,T]$ cf. (\cite{Di}, p. 168) hence $F(\mathfrak{x})$ is well defined. (ii) implies that $F$ is Lipschitzean on $B_{\|\cdot\|_\infty}(\mathfrak{x},\rho_1)$; and so (b) is proven.\\
{\bf (c)} Let ${\mathfrak h}\in NPC^0_R([0,T],E)$, ${\mathfrak h} \neq 0$. We set $\theta^0:=\frac{1}{\|{\mathfrak h}\|_\infty}\min\{\rho,\frac{\alpha}{2}\}>0$. Let $(\theta_n)_{n\in \N} \in \, ]0,\theta^0[^{\N}$ s.t. $\lim\limits_{\substack{n \to +\infty}} \theta_n=0$.\\
Since the Hadamard differentiability implies the G\^ateaux differentiability, from (iii), for all $t\in[0,T]$, we have $D_{H,2}\mathfrak{f}(t,\mathfrak{x}_0(t))\cdot \mathfrak{h}(t)=D_{G,2}\mathfrak{f}(t,\mathfrak{x}_0(t))\cdot\mathfrak{h}(t)=\lim\limits_{\substack{n\to+\infty}} \Psi_n(t)$.\\
where $\Psi_n(t):=\frac{1}{\theta_n}(\mathfrak{f}(t,\mathfrak{x}_0(t)+\theta_n\mathfrak{h}(t))-\mathfrak{f}(t,\mathfrak{x}_0(t)))$.\\
Since $[t\mapsto \mathfrak{f}(t,\mathfrak{x}_0(t)+\theta_n\mathfrak{h}(t))]$ and $[t\mapsto \mathfrak{f}(t,\mathfrak{x}_0(t))]$ are regulated, they are uniform (therefore pointwise) limits of sequences of step functions, hence they are Borel functions, and then $\Psi_n$ is a Borel function as a pointwise limit of a sequence of Borel functions. Hence $[t\mapsto D_{H,2}\mathfrak{f}(t,\mathfrak{x}_0(t))\cdot \mathfrak{h}(t)]$ is a Borel functions.
Using (ii), we see that that $\|\Psi_n(t)\|\le \zeta(t)\|\mathfrak{h}\|_\infty$ for all $t\in[0,T]$ and for all $n\in \N$. Since $\zeta\|\mathfrak{h}\|_\infty \in \mathcal{L}^1(([0,T],\mathcal{B}([0,T])),\mathfrak{m}_1;\R_+)$ we can use the Dominated Convergence Theorem of Lebesgue to assert that $[t\mapsto D_{H,2}\mathfrak{f}(t,\mathfrak{x}_0(t))\cdot \mathfrak{h}(t)] \in \mathcal{L}^1(([0,T],\mathcal{B}([0,T])),\mathfrak{m}_1;\R)$ and also that \\
$\int_{[0,T]} D_{H,2}\mathfrak{f}(t,\mathfrak{x}_0(t))\cdot \mathfrak{h}(t)d\mathfrak{m}_1(t)=\lim\limits_{\substack{n\to +\infty}} \int_{[0,T]}\Psi_n(t)d\mathfrak{m}_1(t)=\lim\limits_{\substack{n\to +\infty}} \frac{1}{\theta_n}(F(\mathfrak{x}_0+\theta_n\mathfrak{h})-F(\mathfrak{x}_0))$; hence $D_G^+F(\mathfrak{x}_0;\mathfrak{h})$ exists and we have 
\begin{equation}\label{31}
D_G^+F(\mathfrak{x}_0;\mathfrak{h})=\int_{[0,T]} D_{H,2}\mathfrak{f}(t,\mathfrak{x}_0(t))\cdot \mathfrak{h}(t)d\mathfrak{m}_1(t).
\end{equation}
Using the linearity of the integral and of the Hadamard differential, we obtain that $D_G^+F(\mathfrak{x}_0;\cdot)$ is linear, and since $|D_G^+F(\mathfrak{x}_0,\mathfrak{h})|\le \|\zeta\|_{L^1}\|\mathfrak{h}\|_\infty$ for all \\$\mathfrak{h}\in NPC^0_R([0,T],E)$, we obtain that $D_G^+F(\mathfrak{x}_0;\cdot)$ is continuous and consequently $F$ is G\^ateaux differentiable at $\mathfrak{x}_0$. Under (a), since $F$ is Lipschitzean on a ball centered at $\mathfrak{x}_0$, using (\cite{F}, p.259), we obtain that $D_HF(\mathfrak{x}_0)$ exists and the formula of $D_HF(\mathfrak{x}_0)\cdot \mathfrak{h}$ is given by $D_G^+F(\mathfrak{x}_0;\mathfrak{h})$ and (\ref{31}). 
\end{proof}
\noindent
{\bf 1$^{st}$ step: existence of $D^+_G V(\pi_0)$.}
We consider $E:=X\times Y\times Z$, $G:=\Omega\times U\times P$ which is open in $E$, $\mathfrak{f}:[0,T]\times G\rightarrow \R$ the function defined by $\mathfrak{f}(t,(\xi,\zeta,\pi)):=f^0(t,\xi,\zeta,\pi)$, $\mathfrak{x}[\pi](t)=(x[\pi](t),u[\pi](t),\pi)$ and when 
$\mathfrak{x}\in NPC^0([0,T],G)$, we consider the function $F(\mathfrak{x}):=\int_0^T \mathfrak{f}(t,\mathfrak{x}(t))dt$ as in Lemma \ref{lem55}.
We want to use Lemma \ref{lem55} with $\mathfrak{x}_0=\mathfrak{x}[\pi_0]\in NPC^0([0,T],G)$. From (IC1), (IC2) and (IC3), the assumptions (i)-(iii) of Lemma \ref{lem55} are fulfilled, therefore we obtain that, for all $v\in NPC^0_d([0,T],X\times Y\times Z)$,
% eq 11.19
\begin{equation}\label{Dfopi0-0}
[t \mapsto D_{H,(2,3,4)}f^0(t,x[\pi_0](t),u[\pi_0](t),\pi_0)\cdot v(t)]  
\in \mathcal{L}^1(([0,T],\mathcal{B}([0,T])),\mathfrak{m}_1;\R),
 \end{equation}
and $F$ is Hadamard differentiable at $\mathfrak{x}[\pi_0]$. 
Next, since (SO), $D_G^+\mathfrak{x}[\pi_0; \delta\pi]$ exists and for all $t\in [0,T]$, $D_G^+\mathfrak{x}[\pi_0; \delta\pi](t):=(D_G^+x[\pi_0;\delta \pi](t),D_G^+u[\pi_0;\delta \pi](t),\delta\pi)$.\\
From (SO), we have also for all $\pi \in P$, $V[\pi]=F(\mathfrak{x}[\pi])+g^0(x[\pi](T),\pi).$
Therefore, using (\cite{F}, (4.2.5) p.263), we have
 $D_G^+V[\pi_0;\delta\pi]=D_HF(\mathfrak{x}[\pi_0])\cdot D_G^+\mathfrak{x}[\pi_0;\delta\pi]+D_Hg^0(x[\pi_0](T),\pi_0)\cdot (D_G^+x[\pi_0;\delta \pi](T),\delta\pi).$\\
\noindent
{\bf 2$^{st}$ step: a first formulation of $D^+_G V(\pi_0)$.}
Using the formula of Lemma \ref{lem55} for $F$ with $\mathfrak{x}_0=\mathfrak{x}[\pi_0]$ and the partial differentials in the previous formula, we obtain  
% eq 11.21
\begin{equation}\label{eq63}
\left.
\begin{array}{l}
D_G^+V[\pi_0;\delta\pi]\\
=\int_{[0,T]} D_{H,2}f^0(t,x[\pi_0](t),u[\pi_0](t),\pi_0)\cdot D_G^+x[\pi_0; \delta\pi](t)\, d\mathfrak{m}_1(t) \\
+\int_{[0,T]} D_{H,3}f^0(t,x[\pi_0](t),u[\pi_0](t),\pi_0)\cdot D_G^+u[\pi_0; \delta\pi](t)\, d\mathfrak{m}_1(t) \\
+\int_{[0,T]} D_{H,4}f^0(t,x[\pi_0](t),u[\pi_0](t),\pi_0)\cdot \delta\pi\, d\mathfrak{m}_1(t) \\
 + D_{H,1}g^0(x[\pi_0](T),\pi_0)\cdot D_G^+x[\pi_0;\delta\pi](T)+D_{H,2}g^0(x[\pi_0](T),\pi_0)\cdot\delta\pi.
\end{array}
\right\}
\end{equation}	
At this time, we see that the second and last terms of (\ref{eq63}) are present in the formula of Theorem \ref{th61}. We ought to transform the other terms.\\
\noindent
{\bf 3$^{st}$ step: the existence of multipliers and the adjoint function of the Pontryagin Theorem.} 
Thanks to (CVT1) and (CVT2), by using Lemma 4.1 in \cite{BY2}, there exists $Q$, an open neighborhood of $\pi_0$, $Q\subset P$, s.t.
\begin{equation}\label{eq62}
\left.
\begin{array}{l}
\forall \pi \in Q,\, ((D_{H,1}g^i(x[\pi](T),\pi)\circ D_{H,3}f(T,x[\pi](T),u[\pi](T),\pi))_{1 \le i \le m},\\ (D_{H,1}h^j(x[\pi](T),\pi)\circ D_{H,3}f(T,x[\pi](T),u[\pi](T),\pi))_{1\le j \le q})\\ \text{ is linearly free. }
\end{array}
\right\}
\end{equation}
Setting, $\forall (\pi,t,\xi,\zeta)\in Q\times [0,T]\times \Omega\times U$, $f_{\pi}^{0}(t,\xi,\zeta)=f^0(t,\xi,\zeta,\pi)$, $f_{\pi}(t,\xi,\zeta)=f(t,\xi,\zeta,\pi)$, $g_\pi^i(\xi):=g^i(\xi,\pi)$ ($0\le i\le m$), $h_\pi^j(\xi):=h^j(\xi,\pi) (1\le j\le q)$, we see that $({\mathcal B},\pi)$ is a special case of the problem of (${\mathcal B}$) of Section 2 of this paper.   
For all $\pi \in Q$, note that (SO) implies that $(x[\pi],u[\pi])$ is a solution of $(\mathcal{B},\pi)$,
 (IC1) and (IC4) imply that (A{\sc i}1) is fulfilled, (IC1) implies that (A{\sc i}2) is fulfilled, (V1) and (V3) imply that (A{\sc v}1) is fulfilled, (V1) implies that (A{\sc v}2) is fulfilled, (CT1) implies that (A{\sc t}1) is fulfilled, (CT2) implies that (A{\sc t}2) is fulfilled, (CT2) implies that (A{\sc t}2) is fulfilled. Hence, the assumptions of Theorem \ref{th21} are fulfilled. Moreover, note that (\ref{eq62}) and (V2) implies that (LI) is fulfilled, and using (IC2), the assumptions of Corollary \ref{cor13}, (iv),  hold. \\
Hence, we obtain the existence and the uniqueness of $((\lambda_i[\pi])_{0 \le i \le m}$, $(\mu_j[\pi])_{1 \le j \le q},\\ p[\pi])\in \R^{k+1}\times \R^{l}\times PC^1([0,T],X^*)$ with $\lambda_0[\pi]=1$ s.t. the following conditions are fulfilled. 
%%%
\begin{itemize} 
\item[{\sf (Si)}] For all $\pi\in Q$, for all $i \in \{0,...,m \}$, $\lambda_{i}[\pi] \geq 0$.
\item[{\sf (S${\ell}$)}] For all $\pi\in Q$, for all $i \in \{1,...,m \}$, $\lambda_i[\pi] g^i(x[\pi](T),\pi) = 0$.
\item[{\sf (TC)}] For all $\pi\in Q$,\\ $D_{H,1}g^0(x[\pi](T),\pi) + \sum_{i = 1}^{m} \lambda_i[\pi] D_{H,1}g^i(x[\pi](T),\pi) \\ + \sum_{j = 1}^q \mu_j[\pi] D_{H,1}h^j(x[\pi](T),\pi) = p[\pi](T)$.
\item[{\sf (AE)}] For all $\pi\in Q$, for all $t \in [0,T]$, ${\underline d}p[\pi](t) \\= -p[\pi](t)\circ D_{F,2}f(t,x[\pi](t),u[\pi](t),\pi)-D_{F,2}f^0(t,x[\pi](t),u[\pi](t),\pi).$  
\item[{\sf (MP)}] For all $\pi\in Q$, for all $t \in [0,T]$, for all $\zeta \in U$, \\
$H_{\pi}(t, x[\pi](t), u[\pi](t), p[\pi](t), 1) \geq H_{\pi}(t, x[\pi](t), \zeta, p[\pi](t), 1).$     
\end{itemize}
\noindent
{\bf $4^{th}$ step: the transformation of the partial differentials of $f^0$ with respect to the state variable and the control variable.} 
For all $t\in[0,T]$, for all real normed space $\mathfrak{Y}$, we consider the evaluation operator $ev_t^{\mathfrak Y}: NPC^0_R([0,T],{\mathfrak Y}) \rightarrow \mathfrak{Y}$ defined by $ev_t^\mathfrak{Y}(\varphi):=\varphi(t)$ when $\varphi\in NPC_R^0([0,T],\mathfrak{Y})$. Note that $ev_t^\mathfrak{Y}\in \mathcal{L}(NPC^0([0,T],\mathfrak{Y}),\mathfrak{Y})$. We can rewrite the evolution equation of $({\mathcal B},\pi)$ in the following form, \\
$\forall \pi\in P,\, \forall t\in[0,T]$, $(ev_t^X\circ \underline{d} \circ x)[\pi]=f(t,\cdot,\cdot,\cdot) \circ (ev_t^X(x)[\cdot],ev_t^Y(u)[\cdot],id_Z)[\pi]$.  
Using (\cite{F}, p.253), we obtain 
$D_G^+(ev_t^X\circ \underline{d} \circ x)[\pi_0;\delta\pi]=D_H(ev_t^X \circ \underline{d})(x([\pi_0]) \cdot D_G^+x[\pi_0;\delta\pi] \\
=(ev_t^X \circ \underline{d}) \cdot D_G^+x[\pi_0;\delta\pi]=ev_t^X(\underline{d}(D_G^+x[\pi_0,\delta\pi])\\
=\underline{d}(D_G^+x[\pi_0;\delta\pi])(t)$ i.e. we have the following inversion of the two notions of differentiation $\underline{d}$ and $D_G^+$. 
\begin{equation}
D_G^+(\underline{d}x(t))[\pi_0;\delta\pi]=\underline{d}(D_G^+x[\pi_0;\delta\pi])(t).
\end{equation}
From (V2), we also have $D_G^+(f(t,\cdot,\cdot,\cdot) \circ (ev_t^X(x)[\cdot],ev_t^Y(u)[\cdot],id_Z))[\pi_0;\delta\pi]\\
=D_{H,(2,3,4)}f(t,x[\pi_0](t),u[\pi_0](t),\pi_0)\cdot (D_G^+x[\pi_0;\delta\pi](t),D_G^+u[\pi_0;\delta\pi](t),\delta\pi)$.
Therefore, we have, for all $ t \in [0,T]$, ${\underline d}[D_G^+x[\pi_0;\delta\pi](t)\\
= D_{H,(2,3,4)}f(t,x[\pi_0](t),u[\pi_0](t),\pi_0)\cdot (D_G^+x[\pi_0;\delta\pi](t),D_G^+u[\pi_0;\delta\pi](t),\delta\pi).$\\
From {\sf (MP)}, (V2) and (IC2), we have, for all $t\in[0,T]$,\\
$p[\pi_0](t)\circ D_{H,3}f(t,x[\pi_0](t),u[\pi_0](t),\pi_0)+D_{H,3}f^0(t,x[\pi_0](t),u[\pi_0](t),\pi_0)=0.$\\
Consequently, for all $t\in [0,T]$, we have
\[\begin{array}{ll}
D_{H,(2,3)}f^0(t,x[\pi_0](t),u[\pi_0](t),\pi_0)\cdot (D_G^+x[\pi_0;\delta\pi](t),D_G^+u[\pi_0;\delta\pi](t))\\
=-{\underline d}[p[\pi_0]](t)\cdot D_G^+x[\pi_0;\delta\pi](t)-\\p[\pi_0](t)\cdot D_{H,2}f(t,x[\pi_0](t),u[\pi_0](t),\pi_0)\cdot D_G^+x[\pi_0;\delta\pi](t)-\\ p[\pi_0](t)\cdot D_{H,3}f(t,x[\pi_0](t),u[\pi_0](t),\pi_0)\cdot D_G^+u[\pi_0;\delta\pi](t) \\
= -{\underline d}[p[\pi_0]](t)\cdot D_G^+x[\pi_0;\delta\pi](t)-p[\pi_0)(t)\cdot D_{H,(2,3,4)}f(t,x[\pi_0](t),u[\pi_0](t),\pi_0)\cdot \\(D_G^+x[\pi_0;\delta\pi](t),D_G^+u[\pi_0;\delta\pi](t),\delta\pi)\\
+p[\pi_0](t)\cdot D_{H,4}f(t,x[\pi_0](t),u[\pi_0](t),\pi_0)\cdot \delta\pi\\
=-{\underline d}[p[\pi_0]](t)\cdot D_G^+x[\pi_0;\delta\pi](t)-p[\pi_0](t)\cdot {\underline d}[D_G^+x[\pi_0;\delta\pi]](t)+\\ p[\pi_0](t)\cdot D_{H,4}f(t,x[\pi_0](t),u[\pi_0](t),\pi_0)\cdot \delta\pi. 
\end{array}
\]
Therefore, we have
% eq 11.27
\begin{equation}\label{eq66}
\left.
\begin{array}{l}
D_G^+V[\pi_0;\delta\pi]=\\
\int_{[0,T]} -{\underline d}[p[\pi_0]](t)\cdot D_G^+x[\pi_0;\delta\pi](t)-p[\pi_0](t)\cdot \underline{d}[D_G^+x[\pi_0;\delta\pi]](t)\, d\mathfrak{m}_1(t)\\+\int_{[0,T]} p[\pi_0](t)\cdot D_{H,4}f(t,x[\pi_0](t),u[\pi_0](t),\pi_0)\cdot \delta\pi \, d\mathfrak{m}_1(t)\\
+ \int_{[0,T]} D_{H,4}f^0(t,x[\pi_0](t),u[\pi_0](t),\pi_0)\cdot \delta\pi \, d\mathfrak{m}_1(t)\\
+D_{H,1}g_0(x[\pi_0](T),\pi_0)\cdot D_G^+x[\pi_0;\delta\pi](T)+D_{H,2}g_0(x[\pi_0](T),\pi_0)\cdot \delta\pi.\\
\end{array}
\right\}
\end{equation}
We consider the function $\psi :[0,T] \rightarrow \R $ defined by, for all $t\in [0,T]$, $\psi(t)=p[\pi_0](t) \cdot D_G^+x[\pi_0;\delta\pi](t)$.\\  
Since $\psi \in PC^1([0,T],X^*)$ and $D_G^+x[\pi_0;\delta\pi]\in PC^1([0,T],X)$, we have\\
 $\psi\in PC^1([0,T],\R)$ and, for all $t\in [0,T]$, we have ${\underline d}p[\pi_0](t)\cdot D_G^+x[\pi_0;\delta\pi](t)+p[\pi_0](t)\cdot {\underline d}[D_G^+x[\pi_0;\delta\pi]](t)$.
i.e. ${\underline d}\psi(t)= {\underline d}p[\pi_0](t)\cdot D_G^+x[\pi_0;\delta\pi](t)+p[\pi_0](t)\cdot {\underline d}[D_G^+x[\pi_0;\delta\pi]](t)$.
Since $\psi\in PC^1([0,T],\R)$, we have, for all $t\in [0,T]$, $\psi(T)-\psi(0)=\int_{0}^T {\underline d}\psi(t)dt$.\\
Moreover, we have $D_G^+x[\pi_0;\delta\pi](0)=0.$ \\Therefore $ -\int_{0}^T {\underline d}\psi(t)dt= -\psi(T)+\psi(0)=-p[\pi_0](T)\cdot D_G^+x[\pi_0;\delta\pi](T).$        \\
Therefore, we obtain 
% eq 11.29
\begin{equation}\label{eq66ter}
\left.
\begin{array}{l}
D_G^+V[\pi_0;\delta\pi] =-p[\pi_0](T)\cdot D_G^+x[\pi_0;\delta\pi](T)\\
+\int_{[0,T]} p[\pi_0](t)\cdot D_{H,4}f(t,x[\pi_0](t),u[\pi_0](t),\pi_0)\cdot \delta\pi \, d\mathfrak{m}_1(t)\\
+ \int_{[0,T]} D_{H,4}f^0(t,x[\pi_0](t),u[\pi_0](t),\pi_0)\cdot \delta\pi \, d\mathfrak{m}_1(t)\\
+D_{H,1}g^0(x[\pi_0](T),\pi_0)\cdot D_G^+x[\pi_0;\delta\pi](T)\\
+D_{H,2}g^0(x[\pi_0](T),\pi_0)\cdot \delta\pi.
\end{array}
\right\}
\end{equation}
\noindent
{\bf $5^{th}$ step: the transformation of the first and the second to last terms of (\ref{eq66ter}).}
From {\sf (TC)}, we obtain 
% eq 11.30
\begin{equation}\label{eq66terter}
\left.
\begin{array}{l}
D_G^+V[\pi_0;\delta\pi] =-\sum_{i=1}^m \lambda_i[\pi_0]D_{H,1}g^i(x[\pi_0](T),\pi_0)\cdot D_G^+x[\pi_0;\delta\pi](T)\\
-\sum_{j=1}^q \mu_j[\pi_0]D_{H,1}h^j(x[\pi_0](T),\pi_0)\cdot D_G^+x[\pi_0;\delta\pi](T)\\
+\int_{[0,T]} p[\pi_0](t)\cdot D_{H,4}f(t,x[\pi_0](t),u[\pi_0](t),\pi_0)\cdot \delta\pi \, d\mathfrak{m}_1(t)\\
+ \int_{[0,T]} D_{H,4}f^0(t,x[\pi_0](t),u[\pi_0](t),\pi_0)\cdot \delta\pi \, d\mathfrak{m}_1(t)\\
+D_{H,2}g^0(x[\pi_0](T),\pi_0)\cdot \delta\pi.
\end{array}
\right\}
\end{equation}
Besides, by using {\sf (MP)} and (V2), we have, for all $\pi \in Q,\\
p[\pi](T)\circ D_{H,3}f(T,x[\pi](T),u[\pi](T),\pi)+D_{H,3}f^0(T,x[\pi](T),u[\pi](T),\pi)=0$.
Consequently, by using {\sf (TC)}, we have
% eq 11.31
\begin{equation}\label{eq67}
\left.
\begin{array}{l}
\forall \pi\in Q,\, \sum_{i = 1}^{m} \lambda_i[\pi] D_{H,1}g^i(x[\pi](T), \pi)\circ D_{H,3}f(T,x[\pi](T),u[\pi](T),\pi)\\ + \sum_{j = 1}^q \mu_j[\pi] D_{H,1}h^j(x[\pi](T),\pi)\circ D_{H,3}f(T,x[\pi](T),u[\pi](T),\pi)\\
=-D_{H,1}g^0(x[\pi](T), \pi) \circ D_{H,3}f(T,x[\pi](T),u[\pi](T),\pi)\\-D_{H,3}f^0(T,x[\pi](T),u[\pi](T),\pi).
\end{array}
\right\}
\end{equation}
In the following lemma, we etablish the continuity of the multipliers with respect to $\pi$. 
% lem 11.32
\begin{lemma}\label{lem61}
For all $i\in\{1,...,\,m\},\,\lambda_i \in C^0(Q,\R_+)$, and, for all $j\in\{1,...,\, q\},$ $\mu_j\in C^0(Q,\R)$.
\end{lemma}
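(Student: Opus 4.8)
The plan is to recover $\lambda_i$ and $\mu_j$ as the solution of a linear system whose coefficients depend continuously on $\pi$, and then invoke Cramer's rule. First I would exploit equation (\ref{eq67}), which expresses, for each $\pi \in Q$, the linear combination $\sum_{i=1}^m \lambda_i[\pi] A_i(\pi) + \sum_{j=1}^q \mu_j[\pi] B_j(\pi)$ as $-A_0(\pi)$, where I write $A_i(\pi) := D_{H,1}g^i(x[\pi](T),\pi)\circ D_{H,3}f(T,x[\pi](T),u[\pi](T),\pi) \in Y^*$ for $0\le i\le m$ and $B_j(\pi) := D_{H,1}h^j(x[\pi](T),\pi)\circ D_{H,3}f(T,x[\pi](T),u[\pi](T),\pi) \in Y^*$ for $1\le j\le q$. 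By (CVT2), each of the maps $\pi \mapsto A_i(\pi)$ and $\pi \mapsto B_j(\pi)$ belongs to $C^0(P,Y^*)$, hence is continuous on $Q$; and by (\ref{eq62}) the family $(A_1(\pi),\dots,A_m(\pi),B_1(\pi),\dots,B_q(\pi))$ is linearly free in $Y^*$ for every $\pi\in Q$.

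Next I would use the inner product $(\cdot\,|\,\cdot)$ on $Y^*$ furnished by (ESP), which is jointly continuous for the norm topology. Taking the inner product of (\ref{eq67}) successively with $A_1(\pi),\dots,A_m(\pi),B_1(\pi),\dots,B_q(\pi)$ produces a square linear system of size $m+q$:
\begin{equation}\label{eq-gram}
\mathfrak{G}(\pi)\cdot\big((\lambda_i[\pi])_{1\le i\le m},(\mu_j[\pi])_{1\le j\le q}\big)^{\!\top} = -\mathfrak{b}(\pi),
\end{equation}
where $\mathfrak{G}(\pi)$ is the Gram matrix of the family $(A_1(\pi),\dots,B_q(\pi))$ with respect to $(\cdot\,|\,\cdot)$ and $\mathfrak{b}(\pi)$ has entries $(A_0(\pi)\,|\,A_i(\pi))$ and $(A_0(\pi)\,|\,B_j(\pi))$. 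Since a family of vectors in an inner-product space is linearly free if and only if its Gram matrix is invertible, (\ref{eq62}) guarantees $\det\mathfrak{G}(\pi)\neq 0$ for all $\pi\in Q$. Moreover, each entry of $\mathfrak{G}(\pi)$ and of $\mathfrak{b}(\pi)$ is a composition of the continuous maps $\pi\mapsto A_i(\pi),B_j(\pi)$ with the continuous bilinear form $(\cdot\,|\,\cdot)$, hence $\pi\mapsto\mathfrak{G}(\pi)$ and $\pi\mapsto\mathfrak{b}(\pi)$ are continuous on $Q$.

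Finally, Cramer's rule expresses each $\lambda_i[\pi]$ and $\mu_j[\pi]$ as a ratio of determinants of matrices whose entries depend continuously on $\pi$, with nonvanishing denominator $\det\mathfrak{G}(\pi)$; since the determinant is a polynomial in the entries and division is continuous away from zero, each $\lambda_i$ and $\mu_j$ is continuous on $Q$. Combining this with (Si), which gives $\lambda_i[\pi]\ge 0$ for all $\pi\in Q$, yields $\lambda_i\in C^0(Q,\R_+)$ and $\mu_j\in C^0(Q,\R)$. The only delicate point is the uniqueness needed to legitimately speak of "the" functions $\lambda_i,\mu_j$: this is exactly the uniqueness clause established in Corollary \ref{cor13}(iv) under (LI), which was already verified to hold on $Q$ in the third step above, so the $\lambda_i[\pi],\mu_j[\pi]$ appearing in (\ref{eq67}) are genuinely well-defined functions of $\pi$ and the system (\ref{eq-gram}) determines them; no real obstacle remains beyond bookkeeping.
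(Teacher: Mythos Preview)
Your approach is correct and essentially the same as the paper's: both recover $(\lambda_i[\pi],\mu_j[\pi])$ as the coordinates of the continuous vector on the right-hand side of (\ref{eq67}) in the continuously varying, linearly independent family $(A_i(\pi),B_j(\pi))$, using the inner product from (ESP) to make this extraction explicit and continuous. The paper delegates the linear-algebraic step to Lemma~4.3 of \cite{BY2} (continuity of coordinate functionals in a parametrized free family), whereas you carry it out directly via the Gram matrix and Cramer's rule; these are the same mechanism.

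One small slip: the right-hand side of (\ref{eq67}) is not just $-A_0(\pi)$ but $-A_0(\pi)-D_{H,3}f^0(T,x[\pi](T),u[\pi](T),\pi)$; you omitted the $f^0$ term. This does not affect the argument, since by (IC2) the map $\pi\mapsto D_{H,3}f^0(T,x[\pi](T),u[\pi](T),\pi)$ is in $C^0(P,Y^*)$, so the corrected right-hand side (and hence $\mathfrak{b}(\pi)$) is still continuous on $Q$.
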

\begin{proof}
First, for all $\pi \in Q$, we set $F_\pi := span \{ e_i[\pi] : 1 \leq i \leq m+q \}$ and $F:=\bigcup_{\pi\in Q} (F_\pi \times \{\pi\})$ where for all $i\in\{1,...,m\},$ $e_i[\pi]=D_{H,1}g^i(x[\pi](T), \pi)\circ D_{H,3}f(T,x[\pi](T),u[\pi](T),\pi)$ and for all $j\in\{1,...,q\},\\ e_{m+j}[\pi]=D_{H,1}h^j(x[\pi](T), \pi) \circ D_{H,3}f(T,x[\pi](T),u[\pi](T),\pi)$.
For all $({\bf x},\pi)\in F$, we denote by ${\bf x}_\alpha(x,\pi)$ the $\alpha$-th coordinate of ${\bf x}$ in the basis $(e_j[\pi])_{1 \leq j \leq m+q}$.\\ 
From (IC2), (V2), (CVT2), (ESP) and (\ref{eq62}), by using Lemma 4.3 in \cite{BY2}, with $\E=Y^*, \W=Z, W=Q,$ we obtain that, $\forall \alpha\in \{1,...,m+q\}$, ${\bf x}_\alpha$ is continuous on $\bigcup_{\pi\in Q} (F_\pi \times \{\pi\})$.\\
Consequently, since (\ref{eq67}), we have, for all $\pi\in Q$, \\${\bf x}[\pi]:=-D_{H,1}g^0(x[\pi](T),\pi)\circ  D_{H,3}f(T,x[\pi](T),u[\pi](T),\pi)-\\D_{H,3}f^0(T,x[\pi](T),u[\pi](T),\pi)\in F_\pi$.\\
Hence, we have for all $i\in\{1,...,m\}$,\,  $\lambda_i={\bf x}_i\circ ({\bf x}, id_Q)\in C^0(Q,\R)$, and \\
for all $j\in\{1,...,q\}$, $\mu_j={\bf x}_{m+j}\circ ({\bf x}, id_Q)\in C^0(Q,\R).$
\end{proof}
Let $i\in\{1,...,\,m\}$; if $\lambda_i[\pi_0]>0$, using Lemma \ref{lem61}, there exists a neighborhood $N$ of $\pi_0$ in $Q$ s.t., for all $\pi \in N$, $\lambda_i[\pi]>0.$
Consequently, by using {\sf (S${\ell}$)}, we obtain that, for all $\pi \in N,\; g^i(x[\pi](T),\,\pi)=0.$
From (SO) and (CT1), we have $D_{H,1}g^i(x[\pi_0](T), \pi_0)\cdot D_G^+x[\pi_0;\delta\pi](T)+D_{H,2}g^i(x[\pi_0](T), \pi_0)\cdot \delta\pi=0.$
Hence we have $\lambda_i[\pi_0]D_{H,2}g^i(x[\pi_0](T), \pi_0)\cdot \delta\pi  = -\lambda_i[\pi_0]D_{H,1}g^i(x[\pi_0](T), \pi_0)\cdot D_G^+x[\pi_0;\delta\pi](T).$

Moreover, if $\lambda_i[\pi_0]=0$, then we also have
$\lambda_i[\pi_0]D_{H,2}g^i(x[\pi_0](T), \pi_0)\cdot \delta\pi \\ = -\lambda_i[\pi_0]D_{H,1}g^i(x[\pi_0](T), \pi_0)\cdot D_G^+x[\pi_0;\delta\pi](T).
$
Hence, we obtain
% eq 11.34
\begin{equation}\label{eq610}
\left.
\begin{array}{l}
\forall i\in\{1,...,\,m\},\, \lambda_i[\pi_0]D_{H,2}g^i(x[\pi_0](T), \pi_0)\cdot \delta\pi \\ = -\lambda_i[\pi_0]D_{H,1}g^i(x[\pi_0](T), \pi_0)\cdot D_G^+x[\pi_0;\delta\pi](T).
\end{array}
\right\}
\end{equation}
Let $j\in\{1,...,\,q\}$; remark that for all $\pi\in Q$, $h^j(x[\pi](T),\pi)=0.$\\
From (SO) and (CT1), we have\\
$D_{H,1}h^j(x[\pi_0](T), \pi_0)\cdot D_G^+x[\pi_0;\delta\pi](T)+D_{H,2}h^j(x[\pi_0](T), \pi_0)\cdot \delta\pi=0.$
Consequently, we obtain 
% eq 11.35
\begin{equation}\label{eq611}
\left.
\begin{array}{l}
\mu_j[\pi_0]D_{H,2}h^j(x[\pi_0](T), \pi_0)\cdot \delta\pi  = -\mu_j[\pi_0]D_{H,1}h^j(x[\pi_0](T), \pi_0)\cdot D_G^+x[\pi_0;\delta\pi](T).
\end{array}
\right.
\end{equation}
From (\ref{eq66terter}), (\ref{eq610}) and (\ref{eq611}), we have
\[
\begin{array}{l}
D_G^+V[\pi_0;\delta\pi]=D_{H,2}g^0(x[\pi_0](T), \pi_0)\cdot \delta\pi+\sum_{i=1}^m \lambda_i[\pi_0]D_{H,2}g^i(x[\pi_0](T), \pi_0)\cdot \delta\pi \\+\sum_{j=1}^q  \mu_j[\pi_0]D_{H,2}h^j(x[\pi_0](T), \pi_0)\cdot \delta\pi \\
+\int_{[0,T]} D_{H,4}f^0(t,x[\pi_0](t),u[\pi_0](t),\pi_0)\cdot \delta\pi \, d\mathfrak{m}_1(t)\\
+\int_{[0,T]} p[\pi_0](t)\cdot D_{H,4}f(t,x[\pi_0](t),u[\pi_0](t),\pi_0)\cdot \delta\pi \, d\mathfrak{m}_1(t).
\end{array}
\]
\subsection{Proof of Corollary \ref{cor62}}
For all $\delta\pi\in Z$, from (SO-bis), $D_G^+x[\pi_0;\delta\pi]$ and $D_G^+u[\pi_0;\delta\pi]$ exist.\\
Consequently, the assumptions of Theorem \ref{th61} are fulfilled for every direction $\delta\pi\in Z$.\\
Therefore, using Theorem \ref{th61}, we have 
% eq 11.36  
\begin{equation}\label{Vg-0}
\left.
\begin{array}{l}
\forall \delta\pi\in Z,\\
D_G^+V[\pi_0;\delta\pi]=D_{H,2}g^0(x[\pi_0](T), \pi_0)\cdot \delta\pi\\
+\sum_{i=1}^m \lambda_i[\pi_0]D_{H,2}g^i(x[\pi_0](T), \pi_0)\cdot \delta\pi \\+\sum_{j=1}^q  \mu_j[\pi_0]D_{H,2}h^j(x[\pi_0](T), \pi_0)\cdot \delta\pi \\
+\int_{[0,T]} D_{H,4}f^0(t,x[\pi_0](t),u[\pi_0](t),\pi_0)\cdot \delta\pi \, d\mathfrak{m}_1(t)\\
+\int_{[0,T]} p(\pi_0)(t)\cdot D_{H,4}f(t,x[\pi_0](t),u[\pi_0](t),\pi_0)\cdot \delta\pi \, d\mathfrak{m}_1(t).
\end{array}
\right\}
\end{equation}
Moreover, we have 
\begin{equation}\label{fokap}
\forall t\in [0,T], \; \|D_{H,(2,3,4)}f^0(t,x[\pi_0](t),u[\pi_0](t),\pi_0)\| \le \kappa(t).
\end{equation}
Since (\ref{fokap}), using the linearity property of the Borel integral, we have \\
$[\delta\pi \mapsto\int_{[0,T]} D_{H,4}f^0(t,x[\pi_0](t),u[\pi_0](t),\pi_0)\cdot \delta\pi \, d\mathfrak{m}_1(t)]\in Z^*.$\\
Besides, from (V4), we have for all $t\in [0,T]$, \\
$\|p[\pi_0](t)\circ D_{H,4}f(t,x[\pi_0](t),u[\pi_0](t),\pi_0)\|\\
\le \|p[\pi_0](t)\| \|D_{H,4}f(t,x[\pi_0](t),u[\pi_0](t),\pi_0)\|\le \|p[\pi_0]\|_{\infty}\underline{c}(t).\\
$
Consequently, using the linearity property of the Borel integral, we have \\
$[\delta\pi \mapsto \int_{[0,T]} p[\pi_0](t)\cdot D_{H,4}f(t,x[\pi_0](t),u[\pi_0](t),\pi_0)\cdot \delta\pi \, d\mathfrak{m}_1(t)]\in Z^*.
$
Therefore, we have $[\delta\pi\mapsto D_G^+V[\pi_0;\delta\pi]]\in Z^*$. Therefore $V$ is G\^ateaux-differentiable at $\pi_0$.\\
\subsection{Proof of Corollary \ref{cor63}}
Note that, by using (SO-ter), the function $\mathfrak{x}: P\rightarrow NPC^0_d([0,T],\Omega\times U\times P )$, defined by, for all $\pi\in P$, for all $t\in [0,T]$, $\mathfrak{x}[\pi](t)=(x[\pi](t),u[\pi](t),\pi)$ is continuous at $\pi_0$.\\
Therefore, we have
% eq 11.40
\begin{equation}\label{Vf-1}
\exists \overline{r}>0\,\text{ s.t. }\forall \pi\in B(\pi_0,\overline{r}), \, \|\mathfrak{x}[\pi]-\mathfrak{x}[\pi_0]\|_\infty <\frac{\rho}{2}.
\end{equation}  
We set $\mathfrak{a}:=\min\{\overline{r},\frac{\rho}{2}\}$.
Note that, we also have 
% lem 11.18
\begin{equation}\label{lemVf1}
\forall \pi \in B(\pi_0,\mathfrak{a}),\, \forall t\in [0,T], \, B_{\|\cdot\|_1}(\mathfrak{x}[\pi](t),\mathfrak{a}) \subset B_{\|\cdot\|_1}(\mathfrak{x}[\pi_0](t),\rho).
\end{equation}
From (IC3), by using (\ref{lemVf1}), we have
% eq 11.41
\begin{equation}\label{Vf-2} 
\left.
\begin{array}{l}
\forall \pi\in B(\pi_0,\mathfrak{a}), \forall t\in [0,T],\,  \forall (\xi_1,\zeta_1,\pi_1),\, (\xi_2,\zeta_2,\pi_2) \in B_{\|\cdot\|_1}(z(\pi)(t),\mathfrak{a}), \\|f^0(t,\xi_1,\zeta_1,\pi_1)-f^0(t,\xi_2,\zeta_2,\pi_2)|\le \kappa(t)\|(\xi_1,\zeta_1,\pi_1)-(\xi_2,\zeta_2,\pi_2)\|_1.   
\end{array}
\right\}
\end{equation}
We set $\mathfrak{W}:=B(\pi_0,\mathfrak{a})\cap Q\subset P$. Let $\pi\in \mathfrak{W}$. Note that ${\mathfrak W}$ is an open neighborhood of $\pi$ in $Q$ and, for all $\overline{\pi}\in {\mathfrak W}$, $(x[\overline{\pi}],u[\overline{\pi}])$ is a solution of $(\mathcal{B},\overline{\pi})$, consequently (SO) is fulfilled $\pi_0=\pi$.\\
$\pi_0$ is not present in the conditions (IC1), (IC4), (V1), (V3), (CT2) and (CVT2), that is why using our assumptions this conditions are already verified. \\
Using our additional assumptions, the conditions (SO-bis), (IC2), (IC3), (V2), (V4) and (CT1) are fulfilled with $\pi_0=\pi$.\\
Besides, the assertions (\ref{Vf-2}) implies that (IC3) is fulfilled with $\pi_0=\pi$, and the assertion (\ref{eq62}) imples that (CVT1) with $\pi_0=\pi$.
Hence, for all $\pi\in \mathfrak{W}$, the assumptions of Corollary \ref{cor62} are fulfilled with $\pi_0=\pi$
Therefore, we can use the Theorem \ref{th61} and we obtain that $V$ is G\^ateaux differentiable for every $\pi\in \mathfrak{W}$ and  
% eq 11.42
\begin{equation}\label{Vf-3} 
\left.
\begin{array}{l}
\forall \pi \in \mathfrak{W}, \forall \delta\pi\in Z  \\
 D_GV[\pi]\cdot \delta\pi=D_{H,2}g^0(x[\pi](T),\pi)\cdot \delta\pi + \sum_{i=1}^m \lambda_i[\pi]D_{H,2}g^i(x[\pi](T), \pi)\cdot \delta\pi\\
+\sum_{j=1}^q \mu_j(\pi)D_{H,2}h^j(x[\pi](T), \pi)\cdot \delta\pi\\
+\int_{[0,T]} D_{H,4}f^0(t,x[\pi](t),u[\pi](t),\pi)\cdot \delta\pi \, d\mathfrak{m}_1(t)\\
+\int_{[0,T]} p[\pi](t)\cdot D_{H,4}f(t,x[\pi](t),u[\pi](t),\pi)\cdot \delta\pi\, d\mathfrak{m}_1(t).
\end{array}
\right\}
\end{equation}
% lem 11.19
Besides, by using (\ref{Vf-2}) and (IC5), we have 
\begin{equation}\label{lemVf2}
\forall \pi\in B(\pi_0,\mathfrak{a}),\; \forall t\in [0,T], \, \|D_{H,(2,3,4)} f^0(t,x[\pi](t),u[\pi](t),\pi)\|\le \kappa(t).
\end{equation}
For all $i\in \{2,4\}$, for all $t\in [0,T]$,  $\pi\in {\mathfrak W}$, we set\\
$\mathfrak{f}_i(t,\pi)=D_{H,i} f(t,x[\pi](t),u[\pi](t),\pi)\text{ and }\mathfrak{f}^0_i(t,\pi)=D_{H,i} f^0(t,x[\pi](t),u[\pi](t),\pi).$
% lem 11.20
In the following lemma, we prove that the adjoint function are continous with respect to the parameter $\pi$.
\begin{lemma}\label{lemVf3}
$[\pi\mapsto p[\pi]]\in C^0({\mathfrak W},(PC^1([0,T],X^*),\|\cdot\|_{\infty}))$.
\end{lemma}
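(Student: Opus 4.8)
The plan is to regard $p[\pi]$, for $\pi\in\mathfrak{W}$, as the unique solution of the linear final-value problem furnished by {\sf (AE)} and {\sf (TC)}, and to run a standard continuous-dependence argument of Gronwall type. For $\pi\in\mathfrak{W}$ set $A[\pi](t):=D_{F,2}f(t,x[\pi](t),u[\pi](t),\pi)\in\mathcal{L}(X,X)$, $b[\pi](t):=D_{F,2}f^0(t,x[\pi](t),u[\pi](t),\pi)\in X^*$, and $P[\pi]:=D_{H,1}g^0(x[\pi](T),\pi)+\sum_{i=1}^m\lambda_i[\pi]D_{H,1}g^i(x[\pi](T),\pi)+\sum_{j=1}^q\mu_j[\pi]D_{H,1}h^j(x[\pi](T),\pi)\in X^*$. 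By {\sf (AE)}, {\sf (TC)} and the identity $p[\pi](T)-p[\pi](t)=\int_t^T\underline{d}p[\pi](s)\,ds$ (valid for $PC^1$ maps into the Banach space $X^*$), $p[\pi]$ is characterised, for all $t\in[0,T]$, by $p[\pi](t)=P[\pi]+\int_t^T(p[\pi](s)\circ A[\pi](s)+b[\pi](s))\,ds$.

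First I would collect the continuity-in-$\pi$ facts for the data. Since $\mathfrak{W}\subset Z$ is metrizable, it suffices to establish sequential continuity, so I fix $\pi_*\in\mathfrak{W}$ and a sequence $\pi_n\to\pi_*$ in $\mathfrak{W}$. For the final value: Lemma~\ref{lem61} gives $\lambda_i[\pi_n]\to\lambda_i[\pi_*]$ and $\mu_j[\pi_n]\to\mu_j[\pi_*]$; by (CT3), $\pi\mapsto D_H\phi(x[\pi](T),\pi)$ is continuous into $(X\times Z)^*$ for every $\phi\in\{g^i:0\le i\le m\}\cup\{h^j:1\le j\le q\}$, and post-composing with the continuous linear restriction $(X\times Z)^*\to X^*$, $L\mapsto L(\cdot,0)$, shows $\pi\mapsto D_{H,1}\phi(x[\pi](T),\pi)$ is continuous into $X^*$; hence $P[\pi_n]\to P[\pi_*]$ in $X^*$. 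For the coefficients: at the points in question $D_{F,2}f$ and $D_{F,2}f^0$ exist (by (V3) and (IC4)) and therefore coincide with the corresponding partial Hadamard differentials, so (V5) and (IC5) give, for each fixed $t\in[0,T]$, $A[\pi_n](t)\to A[\pi_*](t)$ in $\mathcal{L}(X,X)$ and $b[\pi_n](t)\to b[\pi_*](t)$ in $X^*$; moreover (V6) gives $\|A[\pi](t)\|\le c(t)$ for all $\pi\in\mathfrak{W}$, $t\in[0,T]$, with $c\in\mathcal{L}^1$, and (\ref{lemVf2}) gives $\|b[\pi](t)\|\le\kappa(t)$ for all $\pi\in\mathfrak{W}$, $t\in[0,T]$, with $\kappa\in\mathcal{L}^1$.

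Next I would estimate $d_n:=p[\pi_n]-p[\pi_*]$. Subtracting the two integral equations, writing $p[\pi_n](s)\circ A[\pi_n](s)-p[\pi_*](s)\circ A[\pi_*](s)=d_n(s)\circ A[\pi_n](s)+p[\pi_*](s)\circ(A[\pi_n](s)-A[\pi_*](s))$, and bounding $\int_t^T$ by $\int_0^T$ on the two nonnegative error integrands, one obtains, for all $t\in[0,T]$, $\|d_n(t)\|\le\varepsilon_n+\int_t^T\|d_n(s)\|\,c(s)\,ds$, where $\varepsilon_n:=\|P[\pi_n]-P[\pi_*]\|+\int_0^T\|p[\pi_*](s)\|\,\|A[\pi_n](s)-A[\pi_*](s)\|\,ds+\int_0^T\|b[\pi_n](s)-b[\pi_*](s)\|\,ds$. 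I would then check $\varepsilon_n\to0$: the first term by the previous paragraph, and the two integrals by the Dominated Convergence Theorem of Lebesgue, their (piecewise continuous, hence measurable) integrands converging to $0$ pointwise in $s$ and being dominated, respectively, by $2\|p[\pi_*]\|_\infty\,c(\cdot)\in\mathcal{L}^1$ and by $2\kappa(\cdot)\in\mathcal{L}^1$ (note $p[\pi_*]\in PC^1([0,T],X^*)$ is bounded). The (reverse-time) lemma of Gronwall (\cite{PH}, p.~24) applied to the displayed inequality then yields $\|d_n(t)\|\le\varepsilon_n\exp\bigl(\int_t^T c(s)\,ds\bigr)\le\varepsilon_n\,e^{\|c\|_{L^1}}$ for all $t\in[0,T]$, whence $\|p[\pi_n]-p[\pi_*]\|_\infty\le\varepsilon_n\,e^{\|c\|_{L^1}}\to0$. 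Since $\pi_*\in\mathfrak{W}$ is arbitrary, this proves $[\pi\mapsto p[\pi]]\in C^0(\mathfrak{W},(PC^1([0,T],X^*),\|\cdot\|_\infty))$.

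The step needing most care --- bookkeeping rather than a genuine obstacle --- is producing \emph{simultaneously} the pointwise-in-$\pi$ convergence of $A[\pi](t)$ and $b[\pi](t)$ and integrable majorants that do not depend on $\pi$; this is precisely why the reinforced hypotheses (IC5), (V5), (V6) and (CT3) of Corollary~\ref{cor63} are required on top of those of Theorem~\ref{th61}. One must also keep in mind that {\sf (AE)} is written with Fr\'echet partial differentials while the $\pi$-continuity hypotheses are phrased with Hadamard partial differentials; this is harmless, since at the relevant points both exist and hence agree.
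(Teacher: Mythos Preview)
Your proof is correct and follows essentially the same route as the paper: you establish continuity of the terminal value $p[\pi](T)$ via Lemma~\ref{lem61} and (CT3), obtain pointwise-in-$t$ convergence of the coefficients from (IC5) and (V5) together with $\mathcal{L}^1$ majorants from (\ref{lemVf2}) and (V6), apply the Dominated Convergence Theorem, and conclude with a Gronwall estimate on the difference of the integral equations. The only cosmetic differences are your use of sequences (legitimate since $\mathfrak{W}$ is metric) and your explicit remark on the coincidence of the Fr\'echet and Hadamard partial differentials at the relevant points, which the paper leaves implicit.
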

\begin{proof}  
From (TC), (CT3) and Lemma \ref{lem61}, we have
% eq 11.43
\begin{equation}\label{eq615bis}
\left.
\begin{array}{l}
[\pi \mapsto p[\pi](T)]\in C^0(Q,X^*).
\end{array}
\right.
\end{equation}
Let $\hat{\pi}\in \mathfrak{W}$. We consider the functions $\varphi_1:[0,T]\times \mathfrak{W}\rightarrow \R$, defined by $(t,\pi)\in [0,T]\times \mathfrak{W}$, $\varphi_1(t,\pi):=\|{\mathfrak f}_2(t,\pi)-{\mathfrak f}_2(t,\hat{\pi})\|$, and $\varphi^0_1:[0,T]\times \mathfrak{W}\rightarrow \R$, defined by, for all $(t,\pi)\in [0,T]\times \mathfrak{W}$, $\varphi^0_1(t,\pi):=\|\mathfrak{f}^0_2(t,\pi)-\mathfrak{f}^0_2(t,\hat{\pi})\|$. Note that using (IC4) and Lemma \ref{lem113}, we have for all $\pi\in \mathfrak{W},$ $\varphi^0_1(\cdot,\pi)\in NPC_d^0([0,T],\R)$. Besides, using (\ref{lemVf2}), we have for all $\pi\in \mathfrak{W}$, for all $t\in [0,T]$,\,  $\varphi^0_1(t,\pi)\le 2\kappa(t)$. Next, using (IC5), we have, for all $t\in [0,T]$, $\lim\limits_{\substack{\pi\to \hat{\pi}}} \varphi^0_1(t,\pi)=\varphi^0_1(t,\hat{\pi})=0$.\\
Therefore, using the Dominated Convergence Theorem of Lebesgue, the functional $\psi^0_1: \mathfrak{W}\rightarrow \R$, defined by, for all $\pi\in \mathfrak{W}$, $\psi^0_1(\pi):=\int_{0}^{T} \varphi_1^0(t,\pi)\, dt$, is continuous at $\hat{\pi}$ i.e.
% eq 11.47
\begin{equation}\label{pc-4}
 \lim\limits_{\substack{\pi\to \hat{\pi}}} \psi^0_1(\pi)=\psi^0_1(\hat{\pi})=0.
\end{equation}
Using (V3) and Lemma \ref{lem113}, we have, for all $\pi\in \mathfrak{W},\, \varphi_1(\cdot,\pi)\in NPC_d^0([0,T],\R)$.\\
From (V6) we have ,$\forall \pi\in \mathfrak{W},\, \forall t\in [0,T],\,  \varphi_1(t,\pi)\le 2c(t)$.\\
Besides, using (V5), we have, for all $t\in [0,T]$, $\lim\limits_{\substack{\pi\to \hat{\pi}}} \varphi_1(t,\pi)=\varphi_1(t,\hat{\pi})=0.$\\
Consequently, using the Dominated Convergence Theorem of Lebesgue, the functional $\psi_1: \mathfrak{W}\rightarrow \R$, defined by, for all $\pi\in \mathfrak{W}$, $\psi_1(\pi):=\int_{0}^{T} \varphi_1(t,\pi)\, dt$, is continuous at $\hat{\pi}$ i.e.
% eq 11.51
\begin{equation}\label{pc-8}
 \lim\limits_{\substack{\pi\to \hat{\pi}}} \psi_1(\pi)=\psi_1(\hat{\pi})=0.
\end{equation} 
For all $\pi\in \mathfrak{W}$, for all $t\in [0,T]$, we have the following inequalities: 
\[
\begin{array}{l}
\|p[\pi](t)-p[\hat{\pi}](t)\|=\|p[\pi](T)+\int_{T}^{t}[-p[\pi](s)\circ {\mathfrak f}_2(s,\pi)-{\mathfrak f}_2^0(s,\pi)]\, ds-\\(p[\hat{\pi}](T)+\int_{T}^{t}[-p[\hat{\pi}](s)\circ {\mathfrak f}_2(s,\hat{\pi})-{\mathfrak f}_2^0(s,\hat{\pi})]ds)\| \\
\le \|p[\pi](T)-p[\hat{\pi}](T)\| + \int_t^T \|p[\pi](s)\circ {\mathfrak f}_2(s,\pi)-p[\hat{\pi}](s)\circ {\mathfrak f}_2(s,\hat{\pi})\|\,ds \\+
\int_t^T \|{\mathfrak f}_2^0(s,\pi)-{\mathfrak f}_2^0(s,\hat{\pi})\|ds\\
\le \|p[\pi](T)-p[\hat{\pi}](T)\| + \int_t^T \|p[\pi](s)\circ {\mathfrak f}_2(s,\pi)-p[\hat{\pi}](s)\circ {\mathfrak f}_2(s,\pi)+\\p[\hat{\pi}](s)\circ {\mathfrak f}_2(s,\pi)-p[\hat{\pi}](s)\circ {\mathfrak f}_2(s,\hat{\pi})\|\,ds+\int_t^T \|{\mathfrak f}_2^0(s,\pi)-{\mathfrak f}_2^0(s,\hat{\pi})\|ds\\
\le  \|p[\pi](T)-p[\hat{\pi}](T)\| + \int_t^T \|p[\pi](s)-p[\hat{\pi}](s)\|\|{\mathfrak f}_2(s,\pi)\|\, ds \\+\int_t^T\|p[\hat{\pi}](s)\| \|{\mathfrak f}_2(s,\pi)- {\mathfrak f}_2(s,\hat{\pi})\|\,ds+\psi_1^0(\pi)\\ 
\le  \|p[\pi](T)-p[\hat{\pi}](T)\| + \int_{[t,T]} \|p[\pi](s)-p[\hat{\pi}](s)\|\,c(s)\, d\mathfrak{m}_1(s) +\|p[\hat{\pi}]\|_\infty \psi_1(\pi)\\+\psi_1^0(\pi).\\
\end{array}
\]
Since $[s \mapsto \|p[\pi](s)-p[\hat{\pi}](s)\|]\in C^0([0,T],\R)$ and $c\in \mathcal{L}^1(([0,T],\mathcal{B}([0,T])),\mathfrak{m}_1;\R_+)$, by using the lemma of Gronwall (\cite{ATF}, p.183), we have, for all $t\in [0,T]$,\\
$\|p[\pi](t)-p[\hat{\pi}](t)\| \le [ \|p[\pi](T)-p[\hat{\pi}](T)\|+\|p[\hat{\pi}]\|_\infty \psi_1(\pi)+ \\ \psi_1^0(\pi)] \exp(\int_{[t,T]} c(s)\, d\mathfrak{m}_1(s))$. \\ 
Therefore, we have
\begin{equation}\label{pc-9}
 \|p[\pi]-p[\hat{\pi}]\|_\infty \le [ \|p[\pi](T)-p[\hat{\pi}](T)\|+\|p[\hat{\pi}]\|_\infty \psi_1(\pi)+  \psi_1^0(\pi)] \exp(\int_{[0,T]} c(s)\, d\mathfrak{m}_1(s)).
\end{equation}
Hence, using (\ref{eq615bis}), (\ref{pc-4}) and (\ref{pc-8}), we have $ \lim\limits_{\substack{\pi\to \hat{\pi}}}  \|p[\pi]-p[\hat{\pi}]\|_\infty=0.$\\
Consequently, we have $[\pi\mapsto p[\pi]]\in C^0(\mathfrak{W},(PC^1([0,T],X^*),\|\cdot\|_{\infty}))$.
\end{proof}
Next, we consider the function $\Psi_2: {\mathfrak W} \rightarrow Z^*$, defined by, for all $\pi\in {\mathfrak W}$, for all $\delta\pi\in Z$, 
$$\Psi_2(\pi)\cdot \delta\pi:=\int_{[0,T]} D_{H,4}f^0(t,x[\pi](t),u[\pi](t),\pi)\cdot \delta\pi\, d\mathfrak{m}_1(t).$$
% lem 11.21
\begin{lemma}\label{lemVf4}
$\Psi_2\in C^0({\mathfrak W},Z^*)$.
\end{lemma}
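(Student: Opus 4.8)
The plan is to imitate, almost verbatim, the Dominated Convergence argument used for $\psi^0_1$ in the proof of Lemma \ref{lemVf3}, now applied to the fourth partial differential $\mathfrak{f}^0_4(t,\pi):=D_{H,4}f^0(t,x[\pi](t),u[\pi](t),\pi)$. First I would record that $\Psi_2$ is well defined as a map into $Z^*$: by (IC6) the map $[t\mapsto\mathfrak{f}^0_4(t,\pi)]$ is $\mathcal{B}([0,T])$-to-$\mathcal{B}(Z^*)$ measurable, so $[t\mapsto\mathfrak{f}^0_4(t,\pi)\cdot\delta\pi]$ is Borel on $[0,T]$ for every $\delta\pi\in Z$; and by (\ref{lemVf2}), which holds on $B(\pi_0,\mathfrak{a})\supset\mathfrak{W}$, we have $\|\mathfrak{f}^0_4(t,\pi)\|_{Z^*}\le\kappa(t)$ with $\kappa\in\mathcal{L}^1$. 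Hence $[\delta\pi\mapsto\int_{[0,T]}\mathfrak{f}^0_4(t,\pi)\cdot\delta\pi\,d\mathfrak{m}_1(t)]$ is linear and bounded by $\|\kappa\|_{L^1}\|\delta\pi\|$, so $\Psi_2(\pi)\in Z^*$, and, taking the supremum over $\|\delta\pi\|\le 1$,
\[
\|\Psi_2(\pi)-\Psi_2(\hat\pi)\|_{Z^*}\le\int_{[0,T]}\|\mathfrak{f}^0_4(t,\pi)-\mathfrak{f}^0_4(t,\hat\pi)\|_{Z^*}\,d\mathfrak{m}_1(t)
\]
for all $\pi,\hat\pi\in\mathfrak{W}$.

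Next, I would fix $\hat\pi\in\mathfrak{W}$, take an arbitrary sequence $\pi_n\to\hat\pi$ in $\mathfrak{W}$, and set $\varphi_n(t):=\|\mathfrak{f}^0_4(t,\pi_n)-\mathfrak{f}^0_4(t,\hat\pi)\|_{Z^*}$. Each $\varphi_n$ is Borel on $[0,T]$ (difference of two $Z^*$-valued Borel maps, followed by the continuous norm), it satisfies $0\le\varphi_n(t)\le 2\kappa(t)$ for all $t$ by (\ref{lemVf2}), with $2\kappa\in\mathcal{L}^1$, and by (IC5) the map $[\pi\mapsto D_{H,(2,4)}f^0(t,x[\pi](t),u[\pi](t),\pi)]$ is continuous on $P$ for each fixed $t$, whence its restriction to the $Z$-component, $[\pi\mapsto\mathfrak{f}^0_4(t,\pi)]$, is continuous on $P$ and $\varphi_n(t)\to 0$ pointwise. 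The Dominated Convergence Theorem of Lebesgue then yields $\int_{[0,T]}\varphi_n(t)\,d\mathfrak{m}_1(t)\to 0$, and with the displayed inequality above this gives $\|\Psi_2(\pi_n)-\Psi_2(\hat\pi)\|_{Z^*}\to 0$. Since the sequence was arbitrary and $Z$ is metrizable, $\Psi_2$ is continuous at $\hat\pi$, and since $\hat\pi\in\mathfrak{W}$ was arbitrary, $\Psi_2\in C^0(\mathfrak{W},Z^*)$.

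I do not expect a genuine obstacle here: every ingredient is already in place — measurability into $Z^*$ from (IC6), the integrable domination from (\ref{lemVf2}) (which itself rests on (IC3) through (\ref{Vf-2})), and pointwise-in-$t$ continuity in $\pi$ from (IC5). The only point requiring a little care is the passage from ``$\varphi_n(t)\to 0$ for each $t$'' to ``$\int_{[0,T]}\varphi_n\to 0$'', i.e. the verification of the hypotheses of Dominated Convergence (measurability of the $\varphi_n$ and existence of the integrable majorant); this step is structurally identical to the treatment of $\psi^0_1$ and $\psi_1$ in Lemma \ref{lemVf3} and involves only the same minor bookkeeping.
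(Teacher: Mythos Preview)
Your proposal is correct and follows essentially the same approach as the paper: the paper's proof fixes $\hat\pi\in\mathfrak{W}$, sets $\varphi^0_2(t,\pi):=\|\mathfrak{f}^0_4(t,\pi)-\mathfrak{f}^0_4(t,\hat\pi)\|$, invokes (IC6) for measurability, (\ref{lemVf2}) for the domination $\varphi^0_2(t,\pi)\le 2\kappa(t)$, and (IC5) for pointwise convergence, then applies Dominated Convergence to obtain $\psi^0_2(\pi):=\int_{[0,T]}\varphi^0_2(t,\pi)\,d\mathfrak{m}_1(t)\to 0$ and concludes via $\|\Psi_2(\pi)-\Psi_2(\hat\pi)\|\le\psi^0_2(\pi)$. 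Your version differs only cosmetically (you phrase the limit through sequences and add an explicit well-definedness check of $\Psi_2(\pi)\in Z^*$), which is fine.
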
 
\begin{proof}
Let $\hat{\pi}\in {\mathfrak W}$. We consider the fonction $\varphi^0_2:[0,T]\times {\mathfrak W}\rightarrow \R$ defined by $\varphi^0_2(t,\pi):=\|{\mathfrak f}^0_4(t,\pi)-{\mathfrak f}^0_4(t,\hat{\pi})\|$.\\
Using (IC6), we have, for all $\pi\in {\mathfrak W}$, $\varphi^0_2(\cdot,\pi)\in \mathcal{L}^0(([0,T],\mathcal{B}([0,T])),(\R,\mathcal{B}(\R)))$.
Next, using (\ref{lemVf2}), we have, $\forall \pi\in {\mathfrak W}$, $\forall t\in [0,T]$,  $\varphi^0_2(t,\pi)\le 2\kappa(t)$.
Besides, from (IC5), we have, for all $t\in [0,T]$,\,  $\lim\limits_{\substack{\pi\to \hat{\pi}}} \varphi^0_2(t,\pi)=\varphi^0_2(t,\hat{\pi})=0.$
Hence, using the Dominated Convergence Theorem of Lebesgue, the functional $\psi^0_2: {\mathfrak W}\rightarrow \R$, defined by, $\psi^0_2(\pi):=\int_{[0,T]} \varphi_2^0(t,\pi)\, d\mathfrak{m}_1(t)$ is continuous at $\hat{\pi}$ i.e.
% eq 11.56
\begin{equation}\label{Vf4-4}
\lim\limits_{\substack{\pi\to \hat{\pi}}} \psi^0_2(\pi)=\psi^0_2(\hat{\pi})=0.
\end{equation}
For all $\pi\in {\mathfrak W}$, we have $\|\Psi_2(\pi)-\Psi_2(\hat{\pi})\| \le \psi_2^0(\pi)$.\\
Conseqently, using (\ref{Vf4-4}), we have $\lim\limits_{\substack{\pi\to \hat{\pi}}} \|\Psi_2(\pi)-\Psi_2(\hat{\pi})\|=0.$
Hence, we have proven the lemma.
\end{proof}
Now, we consider, the function $\Psi_3: {\mathfrak W} \rightarrow Z^*$, defined by, for all $\pi\in {\mathfrak W}$, for all $\delta\pi\in Z$, 
$\Psi_3(\pi)\cdot \delta\pi:=\int_{[0,T]} p[\pi](t)\cdot D_{H,4}f(t,x[\pi](t),u[\pi](t),\pi)\cdot \delta\pi\, d\mathfrak{m}_1(t).$
% lem 11.22
\begin{lemma}\label{lemVf5}
$\Psi_3\in C^0({\mathfrak W},Z^*)$.
\end{lemma}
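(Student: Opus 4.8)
The plan is to run the argument of Lemma \ref{lemVf4}, the only new feature being the extra factor $p[\pi](t)$, whose continuity in $\pi$ for the sup norm is precisely Lemma \ref{lemVf3}. Fix $\hat{\pi}\in\mathfrak{W}$; I will show $\lim_{\pi\to\hat{\pi}}\|\Psi_3(\pi)-\Psi_3(\hat{\pi})\|_{Z^*}=0$. For every $\delta\pi\in Z$,
\[
(\Psi_3(\pi)-\Psi_3(\hat{\pi}))\cdot\delta\pi=\int_{[0,T]}\big(p[\pi](t)\circ\mathfrak{f}_4(t,\pi)-p[\hat{\pi}](t)\circ\mathfrak{f}_4(t,\hat{\pi})\big)\cdot\delta\pi\,d\mathfrak{m}_1(t),
\]
and I would insert the cross term $p[\hat{\pi}](t)\circ\mathfrak{f}_4(t,\pi)$ so as to split the integrand (applied to $\delta\pi$) into $(p[\pi](t)-p[\hat{\pi}](t))\circ\mathfrak{f}_4(t,\pi)$ and $p[\hat{\pi}](t)\circ(\mathfrak{f}_4(t,\pi)-\mathfrak{f}_4(t,\hat{\pi}))$.

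For the first piece I would use the operator-norm estimate $\|(p[\pi](t)-p[\hat{\pi}](t))\circ\mathfrak{f}_4(t,\pi)\|\le\|p[\pi]-p[\hat{\pi}]\|_\infty\,\|\mathfrak{f}_4(t,\pi)\|$ together with $\|\mathfrak{f}_4(t,\pi)\|\le\|D_{H,(2,4)}f(t,x[\pi](t),u[\pi](t),\pi)\|\le c(t)$ from (V6); hence its contribution to $\|\Psi_3(\pi)-\Psi_3(\hat{\pi})\|_{Z^*}$ is at most $\|p[\pi]-p[\hat{\pi}]\|_\infty\int_{[0,T]}c\,d\mathfrak{m}_1$, which tends to $0$ by Lemma \ref{lemVf3}. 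For the second piece I would bound $\|p[\hat{\pi}](t)\|\le\|p[\hat{\pi}]\|_\infty$ and set $\varphi_3(t,\pi):=\|\mathfrak{f}_4(t,\pi)-\mathfrak{f}_4(t,\hat{\pi})\|$, so that its contribution is at most $\|p[\hat{\pi}]\|_\infty\int_{[0,T]}\varphi_3(t,\pi)\,d\mathfrak{m}_1(t)$; exactly as in Lemma \ref{lemVf4}, (V6) gives measurability of $\varphi_3(\cdot,\pi)$ and the majorant $\varphi_3(t,\pi)\le2c(t)$, while (V5) gives $\lim_{\pi\to\hat{\pi}}\varphi_3(t,\pi)=0$ for every $t$, so the Dominated Convergence Theorem yields $\int_{[0,T]}\varphi_3(t,\pi)\,d\mathfrak{m}_1(t)\to0$.

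Adding the two estimates and taking the supremum over $\delta\pi$ with $\|\delta\pi\|\le1$ gives $\|\Psi_3(\pi)-\Psi_3(\hat{\pi})\|_{Z^*}\to0$, i.e. continuity of $\Psi_3$ at $\hat{\pi}$; since $\hat{\pi}\in\mathfrak{W}$ is arbitrary, $\Psi_3\in C^0(\mathfrak{W},Z^*)$. (That $\Psi_3(\pi)\in Z^*$ is immediate: linearity in $\delta\pi$ is clear, and $\|\Psi_3(\pi)\|_{Z^*}\le\|p[\pi]\|_\infty\int_{[0,T]}c\,d\mathfrak{m}_1<+\infty$ by (V6).) There is no hard single estimate here; the one point worth noting is that the uniform integrable majorant $c$ supplied by (V6) is what keeps the ``$p$-difference'' term under control even though $\mathfrak{f}_4$ itself depends on $\pi$, and the rest is the same bookkeeping as in Lemma \ref{lemVf4} fed with the output of Lemma \ref{lemVf3}.
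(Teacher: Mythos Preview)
Your proof is correct and follows essentially the same approach as the paper: the same cross-term splitting via $p[\hat{\pi}](t)\circ\mathfrak{f}_4(t,\pi)$, the same use of (V6) for the integrable majorant and measurability, (V5) for pointwise convergence, Dominated Convergence on $\int\|\mathfrak{f}_4(t,\pi)-\mathfrak{f}_4(t,\hat{\pi})\|\,d\mathfrak{m}_1$, and Lemma \ref{lemVf3} to kill the $\|p[\pi]-p[\hat{\pi}]\|_\infty$ term. The only cosmetic differences are your naming ($\varphi_3$ versus the paper's $\varphi_2$) and your explicit remark that $\Psi_3(\pi)\in Z^*$.
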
 
\begin{proof}
Let $\hat{\pi}\in {\mathfrak W}$. We consider the function $\varphi_2:[0,T]\times {\mathfrak W}\rightarrow \R$, defined by $\varphi_2(t,\pi):=\|{\mathfrak f}_4(t,\pi)-{\mathfrak f}_4(t,\hat{\pi})\|$.\\
From (V6), we have, for all $\pi\in {\mathfrak W}$, $\varphi_2(\cdot,\pi)\in \mathcal{L}^0(([0,T],\mathcal{B}([0,T])),(\R,\mathcal{B}(\R)))$.
Moreover, using (V6), we also have, $\forall \pi\in {\mathfrak W},\, \forall t\in [0,T],\,  \varphi_2(t,\pi)\le 2c(t)$.
Besides, from (V5), we have, for all $t\in [0,T]$, $\lim\limits_{\substack{\pi\to \hat{\pi}}} \varphi_2(t,\pi)=\varphi_2(t,\hat{\pi})=0$.
Hence, using the Dominated Convergence Theorem of Lebesgue the functional $\psi_2: {\mathfrak W}\rightarrow \R$, defined by $\pi\in {\mathfrak W}$, $\psi_2(\pi):=\int_{[0,T]} \varphi_2(t,\pi)\, d\mathfrak{m}_1(t)$, is continuous at $\hat{\pi}$ i.e.
% eq 11.60
\begin{equation}\label{Vf5-4}
\lim\limits_{\substack{\pi\to \hat{\pi}}} \psi_2(\pi)=\psi_2(\hat{\pi})=0.
\end{equation}
For all $\pi\in {\mathfrak W}$, we have
$
\|p[\pi](t)\circ {\mathfrak f}_4(t,\pi)-p[\hat{\pi}](t)\circ {\mathfrak f}_4(t,\hat{\pi}) \|\\
=\|p[\pi](t)\circ {\mathfrak f}_4(t,\pi)-p[\hat{\pi}](t)\circ {\mathfrak f}_4(t,\pi)+p[\hat{\pi}](t)\circ {\mathfrak f}_4(t,\pi)-p[\hat{\pi}](t)\circ {\mathfrak f}_4(t,\hat{\pi}) \| \\
\le  \|p[\pi]-p[\hat{\pi}]\|_\infty c(t)+\|p[\hat{\pi}]\|_\infty \varphi_2(t,\pi).\\
$
Consequently, we obtain 
$$
\|\Psi_3(\pi)-\Psi_3(\hat{\pi})\|\\
\le \|p[\pi]-p[\hat{\pi}]\|_\infty \int_{[0,T]} c(t)\,d\mathfrak{m}_1(t)+\|p[\hat{\pi}]\|_\infty \psi_2(\pi).
$$
Consequently, using Lemma \ref{lemVf3} and (\ref{Vf5-4}), we obtain $\lim\limits_{\substack{\pi \to\hat{\pi}}} \|\Psi_3(\pi)-\Psi_3(\hat{\pi})\|=0.$
Therefore, we have proven this lemma.
\end{proof}
From (\ref{Vf-3}), remark that 
$$
\left.
\begin{array}{ll}
\forall \pi \in {\mathfrak W}, & \null  \\
 D_GV[\pi]&=D_{H,2}g_0(x[\pi](T),\pi) + \sum_{i=1}^m \lambda_i[\pi]D_{H,2}g^i(x[\pi](T), \pi)\\
\null &+\sum_{j=1}^q \mu_j[\pi]D_{H,2}h^j(x[\pi](T), \pi)+ \Psi_2(\pi)+\Psi_3(\pi).
\end{array}
\right\}
$$
Consequently, using (CT3) and Lemmas \ref{lem61}, \ref{lemVf4} and \ref{lemVf5}, we obtain that, $D_GV\in C^0({\mathfrak W},Z^*).$   
Therefore, using Corollary 2, p. 144 in \cite{ATF}, we obtain that $V$ is Fr\'echet differentiable on ${\mathfrak W}$ and $D_FV[\pi]=D_GV[\pi]$ for all $\pi\in {\mathfrak W}$, and therefore $D_FV\in C^0({\mathfrak W},Z^*)$.

%%%%%%%%%%%%%%%%%%%%%%%%%%%%%%%%%%%%%%%%%%%%%%%%%%%%%%%%%%%%%%%%%%%%%%%%%%%%%%%%%%%%%%%%%%%%%%%%%%%%%%%%%%%%%%%%%%%%%%%%%%%%%%%%%%
\end{document}